\documentclass[10pt]{article}%
\setlength{\textheight}{24cm}
\setlength{\textwidth}{16cm}
\setlength{\oddsidemargin}{2mm}
\setlength{\evensidemargin}{2mm}
\setlength{\topmargin}{-15mm}
\parskip2mm
   \setcounter{page}{1}
\usepackage{amssymb}
\usepackage{amsfonts}
\usepackage{amsmath}
\usepackage{color}
\usepackage{graphicx}%
\setcounter{MaxMatrixCols}{30}
\providecommand{\U}[1]{\protect\rule{.1in}{.1in}}
\newtheorem{theorem}{Theorem}

\newtheorem{corollary}[theorem]{Corollary}

\newtheorem{definition}[theorem]{Definition}

\newtheorem{lemma}[theorem]{Lemma}

\newtheorem{proposition}[theorem]{Proposition}
\newtheorem{remark}[theorem]{Remark}

\newenvironment{proof}[1][Proof]{\noindent\textbf{#1.} }{\ \rule{0.5em}{0.5em}}

\numberwithin{equation}{section}
\begin{document}

\title{On a nonlocal Cahn-Hilliard equation with a reaction term
\footnote{{\bf Acknowledgment.}\quad\rm  The financial support of the FP7-IDEAS-ERC-StG \#256872
(EntroPhase) is gratefully acknowledged by the authors. The present paper 
also benefits from the support of the GNAMPA (Gruppo Nazionale per l'Analisi Matematica, la Probabilit\`a e le loro Applicazioni) of INdAM (Istituto Nazionale di Alta Matematica).}}
\author{
{Stefano Melchionna\thanks{
University of Vienna ,
Faculty of Mathematics,
Oskar-Morgenstern-Platz 1, 
1090 Wien, Austria. E-mail: \textit{melchionna.s90@gmail.com}  }}
\and 
{Elisabetta Rocca\thanks{
Weierstrass Institute for Applied Analysis and Stochastics, Mohrenstr. 39, D-10117 Berlin,
Germany.
E-mail: \textit{Elisabetta.Rocca@wias-berlin.de} and Dipartimento di Matematica ``F. Enriques'',
Universit\`{a} degli Studi di Milano, Milano I-20133, Italy.
E-mail: \textit{elisabetta.rocca@unimi.it} }}}
\maketitle

\begin{abstract}
We prove existence, uniqueness, regularity and separation properties for a
nonlocal Cahn-Hilliard equation with a reaction term. We deal here with the
case of logarithmic potential and degenerate mobility as well an uniformly
lipschitz in $u$ reaction  term $g(x,t,u).$

\end{abstract}

\section{Introduction}

Our aim is to generalize existence, uniqueness, separation property and
regularity results, proved by Gajewski, Zacharias \cite{GZ} and Londen and
Petzeltov\`{a} \cite{LP2} for the nonlocal Cahn-Hilliard equation, to the
nonlocal Cahn-Hilliard equation with reaction. Hence, we aim to study the 
 following initial boundary value problem: %
\begin{equation}
\partial_t u-\nabla\cdot(\mu\nabla v)=g(u)\text{ in
}Q\text{,} \label{eq:CH}%
\end{equation}%
\begin{equation}
v=f^{\prime}(u)+w\text{ in }Q\text{,} \label{def:v}%
\end{equation}%
\begin{equation}
w(x.t)=\int_{\Omega}K(\left\vert x-y\right\vert )(1-2u(y,t))dy\text{ for
}(x,t)\in Q\text{,} \label{def:w}%
\end{equation}%
\begin{equation}
n\cdot\mu\nabla v=0~\ \text{on }\Gamma, \label{eq:bc}%
\end{equation}%
\begin{equation}
u(x,0)=u_{0}(x)\text{, }x\in\Omega\text{,} \label{eq:ic}%
\end{equation}
where $Q=\Omega\times(0,T)$, $\Omega\subset\mathbb{R}^d$ is a bounded domain, $\Gamma=\partial\Omega\times(0,T)$, and  $n$ is the
outer unit normal to $\partial\Omega$. The functions $f$ and $\mu$ are
definite by
\begin{equation}
f(u)=u\log u+(1-u)\log(1-u)\text{,} \label{def:f}%
\end{equation}%
\begin{equation}
\mu=\frac{1}{f^{\prime\prime}(u)}=u(1-u)\text{.} \label{def:mu}%
\end{equation}
The man novelty of the paper is that we can take into account in our analysis of the reaction term $g$ in 
\eqref{eq:CH}, which can be taken as a Lipschitz continuous function of the unknown $u$. 

Let us briefly recall here - for the readers' convenience - the derivation 
of the nonlocal Cahn-Hilliard equation and the comparison with the local one. 
System \eqref{eq:CH}--\eqref{eq:ic} describes the evolution of a binary alloy with components $A$ and $B$ occupying a spatial
domain $\Omega$. We denote by $u$ the local concentration of $A$. To describe
phase separation in binary system one uses generally the standard local Cahn-Hilliard
equation, which is derived (cf.~\cite{CH}) from a free energy functional of this form 
of the form
\begin{equation}
E_{CH}(u)=\int_{\Omega}\left(  \frac{\tau^{2}}{2}\left\vert \nabla
u\right\vert ^{2}+F(u)\right)  dx\text{.} \label{CHE}%
\end{equation}
Here $F(u)$ denotes the Helmholtz free energy density of $A$. It is defined
as
\begin{equation}
F(u)=2K_{B}T_{c}u(1-u)+K_{B}Tf(u)\text{,} \label{CHF}%
\end{equation}
where $K_{B}$ is the Boltzmann's constant, $T$ is the system temperature,
$T_{c}$ is called critical temperature and $f$ is defined as
\begin{equation}
f(u)=u\ln u+\left(  1-u\right)  \ln\left(  1-u\right)  \text{.} \label{CHf}%
\end{equation}
Considering that the dynamics tends to minimize the energy $E_{CH}$, Cahn
obtained (\cite{Ca}) the following equation for $u$:%
\begin{equation}
u_{t}+\nabla\cdot J=0 \label{prim}%
\end{equation}
where $J$ is defined as
\begin{equation}\label{jei}
J=-\mu(u)\nabla v\text{.}%
\end{equation}
The function $\mu$ is named mobility and $v$ denotes the first variation of
$E_{CH}$ with respect to $u$:%
\begin{equation}
v=\frac{\delta E_{CH}}{\delta u}=F^{\prime}(u)-\tau^{2}\Delta u\text{,}
\label{ov}%
\end{equation}
known as chemical potential. For simplicity, in literature the mobility is
often chosen constant although its physical (degenerate) relevant form is%
\begin{equation}
\mu=au(1-u)\text{, }a>0 \label{omu}%
\end{equation}
(see \cite{Ca}), where $a$ is a positive function possibly depending on $u$
and $\nabla u$ separated from $0$ (in literature $a$ is often a positive
constant). Equation (\ref{prim}) is, hence, a 4th order nonlinear PDE known as
Cahn-Hilliard equation:%
\begin{equation}
u_{t}+\nabla\cdot\left(  \mu(u)\nabla(F^{\prime}(u)-\tau^{2}\Delta u)\right)
=0\text{,} \label{och}%
\end{equation}
which is usually coupled with the following boundary conditions:%
\begin{equation}
\frac{\partial u}{\partial n}=0\text{ on }\partial\Omega\quad\hbox{and } \mu(u)n\cdot\nabla v=0\text{ on }\partial\Omega\text{.} \label{obc}%
\end{equation}
This last condition ensures the mass conservation. Indeed, thanks to
(\ref{obc}), we have:
\[
\frac{d}{dt}\int_{\Omega}u=\int_{\Omega}u_{t}=-\int_{\Omega}\nabla\cdot\left(
\mu(u)\nabla v\right)  =\int_{\partial\Omega}\mu(u)n\cdot\nabla v=0\text{.}%
\]

Despite numerical results on the Cahn-Hilliard equation are in good agreement
with experiments, G. Giacomin and J. L. Lebowitz in \cite{GL1} and  \cite{GL2}
showed that Cahn-Hilliard equation cannot be derived from microscopic
phenomena. This motivation led G. Giacomin and J. L. Lebowitz to study the
problem of phase separation from the microscopic viewpoint using statistical
mechanics. Then, performing the hydrodynamic limit they deduced a continuum
model. By proceeding in this way they found a nonlocal version of the
Cahn-Hilliard equation that is a second order nonlinear integrodifferential
equation:
\begin{equation}
u_{t}+\nabla\cdot J=0 \label{nlch}%
\end{equation}
where $J$ is defined as in \eqref{jei}, $\mu$ denotes the mobility term (defined as in (\ref{omu})), and
$v=\frac{\delta E}{\delta u}$. Here the energy functional $E$ is given by%
\begin{equation}
E(u)=\frac{1}{2}\int_{\Omega}\int_{\Omega}K(x-y)(u(x)-u(y))^{2}dxdy+\int
_{\Omega}f(u)+ku(1-u)dx\text{.} \label{nle}%
\end{equation}
This leads to
\begin{equation}
v=f^{\prime}(u)+w\text{, where }w=K\ast(1-2u)\text{,} \label{nlv}%
\end{equation}
and where $K$ is a symmetric positive convolution kernel, $k(x)=\int_{\Omega
}K(x-y)dy$ and $f$ is defined as in (\ref{CHf}).

Nonlocal Cahn-Hilliard equation is generally coupled with the boundary
condition%
\begin{equation}\label{bou}
\mu(u)n\cdot\nabla v=0\text{ on }\partial\Omega.
\end{equation}
Thus, the mass-conservation is still preserved. Notice that the nonlocal contribution $\frac{1}{2}\int_{\Omega}\int_{\Omega}K(x-y)(u(x)-u(y))^{2}dxdy$ in 
\eqref{nle}, replacing the local one $\int_\Omega \frac{\tau^{2}}{2}\left\vert \nabla
u\right\vert ^{2}$, better describes the long-range interactions between points in $\Omega$. Moreover, let us note that 
the local term $\int_\Omega \frac{\tau^{2}}{2}\left\vert \nabla
u\right\vert ^{2}$ could be formally obtained from the nonlocal one (cf.~\cite{KRS}).

Adding a reaction term to the Cahn-Hilliard equation is useful in several
applications such as biological models (\cite{KS}), impainting algorithms
(\cite{BEG}), polymers (\cite{BO}). Cahn-Hilliard equation with reaction is
\begin{equation}
u_{t}+\nabla\cdot J=g(u)\text{,} \label{opla}%
\end{equation}
where $J=-\mu\nabla v$ and $v$ as in (\ref{ov}) or as in (\ref{nlv}) and
$g(u)=g(x,t,u)$.

The main difficulties in studying Cahn-Hilliard equation with reaction are due
to the non-conservation of the mass. Indeed, thanks to the boundary condition \eqref{bou},
we have%
\begin{equation}
\frac{d}{dt}\int_{\Omega}u=\int_{\Omega}g\neq0\text{.} \label{massa non cons}%
\end{equation}

Some analytical results on the local Cahn-Hilliard equation with reaction term
are \cite{CMZ2}, \cite{Mi}. Existence and uniqueness for nonlocal
Cahn-Hilliard equation with constant mobility, polynomial potential and
reaction term are proved in \cite{DP}.

To the best of our knowledge no previous works on the nonlocal Cahn-Hilliard
equation with reaction and with singular potential and degenerate mobility
have been proved. Furthermore, our assumptions on the reaction term (see
(\ref{G2})-(\ref{ipo:g5})) are more general then in \cite{CMZ2}, \cite{Mi} and
\cite{DP} and they are satisfied in every application we know (cf., e.g., \cite{KS},
\cite{BEG}, \cite{BO}).

\paragraph{Plan of the paper.} In Section \ref{setting} we set notation, describe assumptions on data and
state the main results. Existence and uniqueness are proved in Section
\ref{dimostra}. Regularity results are proved in Section \ref{RRRR}. Section
\ref{Separazione} is devoted to the proofs of  the separation properties.\ Some remarks are stated
in Section \ref{comm}. Appendix (Section \ref{appendix}) contains example of
convolution kernels and auxiliary theorems.

\section{Assumptions on data and main results\label{setting}}

\subsection{Notation}

Set $\Omega\subset%
\mathbb{R}
^{d}$, $d\in%
\mathbb{N}
$, a bounded domain with a sufficiently smooth boundary (e.g., of class
$C^{1,1}$).

If $X$ is a real Banach space, $X^{\ast}$ will denote its dual. For every
$z\in(H^{1}(\Omega))^{\ast}$ we denote $\bar{z}=\left\langle z,\frac
{1}{\left\vert \Omega\right\vert }\right\rangle $. Here $\left\langle
,\right\rangle $ denotes the pairing of $H^{1}(\Omega)$ and $\left(
H^{1}(\Omega)\right)  ^{\ast}$. Let us introduce also the space $H_{0}%
^{1}(\Omega)=\{z\in H^{1}(\Omega):\bar{z}=0\}$.

Set $H^{1}(0,T,X,X^{\ast})=\{$ $z\in L^{2}(0,T,X):\,z_t\in
L^{2}(0,T,X^{\ast})\}$.

If $z\in$ $H^{1}(0,T,X,X^{\ast})$ the symbols $z^{\prime}$, $\partial_t z$, $\frac{\partial z}{\partial t}$, and $z_{t}$ will denote the partial derivative of $z$ with respect to the
$t$-variable (time). Let $f\in C^{1}(%
\mathbb{R}
)$, we use the symbol $f^{\prime}$ to denote the derivative of the function
$f$. Finally, set $y\in H^{1}([0,T]\times\Omega)$, we indicate the partial
derivative of $y$ with respect to the first variable (time) with the symbols
$\partial_{t}y$ or $\frac{\partial y}{\partial t}$ and the partial derivate of
$y$ with respect to the $x_{i}-$variable with the symbol $\partial_{i}y$.

If $\alpha:%
\mathbb{R}
^{d}\rightarrow%
\mathbb{R}
$ and $\beta:\Omega\subset%
\mathbb{R}
^{d}\rightarrow%
\mathbb{R}
$ are measurable functions $\alpha\ast\beta$ will denote the convolution
product definite by $\alpha\ast\beta(x)=\int_{\Omega}\alpha(x-y)\beta(y)dy$
for $x\in%
\mathbb{R}
^{d}$.

\subsection{Assumptions on data}

The given functions $K$, $u_{0}$ and $g$ will be assumed to fulfill the
following conditions.

\begin{description}
\item[(K)] The convolution kernel $K:%
\mathbb{R}
^{d}\rightarrow%
\mathbb{R}
$ satisfies the assumptions%
\begin{equation}
K(x)=K(-x)\text{ for }a.a.~x\in%
\mathbb{R}
^{d}\text{,} \tag{K1}\label{K1}%
\end{equation}%
\begin{equation}
\sup_{x\in\Omega}\int_{\Omega}\left\vert K(x-y)\right\vert dy<+\infty\text{,}
\tag{K2}\label{K2}%
\end{equation}%
\begin{equation}
\forall p\in\lbrack1,+\infty]\text{ }\exists r_{p}>0\text{ such that
}\left\Vert K\ast\rho\right\Vert _{W^{1,p}(\Omega)}\leq r_{p}\left\Vert
\rho\right\Vert _{L^{p}(\Omega)}\text{,} \tag{K3}\label{K3}%
\end{equation}%
\begin{equation}
\exists C>0\text{ such that }\left\Vert K\ast\rho\right\Vert _{W^{2,2}%
(\Omega)}\leq C\left\Vert \rho\right\Vert _{W^{1,2}(\Omega)}\text{;}
\tag{K4}\label{K4}%
\end{equation}

\item[(u0)] The initial datum $u_{0}$ is supposed to satisfy

\begin{equation}
u_{0}\text{ is measurable, } \tag{U01}\label{u00}%
\end{equation}%
\begin{equation}
0\leq u_{0}(x)\leq1~\text{for a.a.}~x\in\Omega\text{,} \tag{U02}\label{u011}%
\end{equation}%
\begin{equation}
0<\bar{u}_{0}<1\text{;} \tag{U03}\label{u02}%
\end{equation}

\item[(G)] The reaction term $g:\Omega\times%
\mathbb{R}
^{+}\times\lbrack0,1]\rightarrow%
\mathbb{R}
$ 
is such that 
\begin{align}
&g(x,t,s)\text{ is continuous,} \tag{G1}\label{G2}\\
&  \exists L  >0\text{ such that }\left\vert g(x,t,s_{1})-g(x,t,s_{2}%
)\right\vert \leq L\left\vert s_{1}-s_{2}\right\vert 
\forall s_{1},s_{2}  \in\lbrack0,1]\text{, }\forall x\in\Omega,\,\,\forall t\in%
\mathbb{R}
^{+} \tag{G2}%
\label{ipo:gLip}\\
&g(x,t,0)\geq0\geq g(x,t,1)\text{} \quad\forall x\in\Omega\text{, }\forall t\in%
\mathbb{R}^+\,.\label{ipo:g5}\tag{G3}
\end{align}
\end{description}

We remark that, as a consequence of (\ref{G2}) for every $T>0$, there exist
$C>0$ depending on $T$ so that%
\begin{equation}
\left\vert g(x,t,s)\right\vert \leq C\quad \forall s\in\lbrack
0,1],t\in\lbrack0,T],x\in\Omega. \label{ipo:Gi3}%
\end{equation}
Furthermore, as a consequence of (\ref{ipo:gLip}), we have%
\begin{gather}
g\text{ is differentiable for }a.a.\text{~}s\in\lbrack0,1] \,\,  \label{ipo:gUni}
\text{and }\left\vert \partial_{s}g(x,t,s)\right\vert \leq L\text{ for
a.a.}~(x,t,s)\in\Omega\times%
\mathbb{R}
^{+}\times\lbrack0,1],\nonumber
\end{gather}
where $L$ as in (\ref{ipo:gLip}) (see \cite{NZ}).

\begin{remark}
Some examples of convolution kernels $K$ which satisfy the above conditions
(\ref{K1})-(\ref{K4}) are given by Newton potentials:%
\[
\left\{
\begin{array}
[c]{lll}%
K(\left\vert x\right\vert )=k_{d}\left\vert x\right\vert ^{2-d} & \text{for }
& d>2\\
K(\left\vert x\right\vert )=-k_{2}\ln\left\vert x\right\vert  & \text{for} &
d=2
\end{array}
\right.
\]
where $k_{d}=$ cost $>0$, gaussian kernel $K(\left\vert x\right\vert
)=C\exp(-\left\vert x\right\vert ^{2}/\lambda)$ and mollifiers (cf. Section
\ref{convoluzione} in the Appendix).
\end{remark}

\begin{remark}
Examples of functions $g$ which satisfy the conditions (\ref{G2}%
)-(\ref{ipo:g5}) are given by both classical reactions terms as $g(u)=\pm
(u^{3}-u)$ and terms used in recent applications of the Cahn-Hilliard
equations as $g(x,t,u)=\alpha(x,t)u(1-u)$ (\cite{KS}), $g(x,t,u)=\lambda
(x)(h(x)-u)$ (\cite{BEG}) or $g(x,t,u)=-\sigma(x,t)u$ (\cite{BO}) where
$\lambda,h,\alpha$ and $\sigma$ are continuous and positive functions, $h<1$.
\end{remark}

\subsection{Main results\label{ris princ}}

Before stating the main results of this work, let us introduce the definition
of weak solution to system (\ref{eq:CH})-(\ref{eq:ic}).

\begin{definition}
\label{def1}Let $u_{0},K,g$ be such that conditions (\ref{u00})-(\ref{u02}),
(\ref{K1})-(\ref{K4}), (\ref{G2})-(\ref{ipo:g5}) are satisfied. Then, given
$T\in(0,+\infty)$, $u$ is a weak solution to (\ref{eq:CH})-(\ref{eq:ic}) on
$[0,T]$ if
\begin{equation}
u\in H^{1}(0,T,H^{1}(\Omega),\left(  H^{1}(\Omega)\right)  ^{\ast})\text{,}
\label{rego}%
\end{equation}%
\begin{equation}
0\leq u\leq1\quad \text{a.e.~in }Q\text{,} \label{xxx}%
\end{equation}%
\[
w=K\ast(1-2u)\quad \text{a.e.~in }Q\text{,}%
\]%
\[
w\in C([0,T],W^{1,\infty}(\Omega))\text{,}%
\]%
\[
u(0)=u_{0}\text{ in }L^{2}(\Omega)\text{,}%
\]
and the following variational formulation is satisfied almost everywhere  in $(0,T)$ and for every $\psi\in H^{1}(\Omega)$%
\begin{equation}
\left\langle u_{t},\psi\right\rangle +(\mu(u)\nabla w,\nabla\psi)+(\nabla
u,\nabla\psi)=(g(u),\psi)\text{.} \label{CHWEAK}%
\end{equation}

\end{definition}

\begin{remark}
As consequence of (\ref{rego}), $u\in C([0,T],L^{2}(\Omega))$. Hence, the
initial condition (\ref{eq:ic}) makes sense. Moreover, let us note that this notion of solution 
turns out to be particularly useful since it does not involve the potential $f$ and so it can be stated 
for solutions  $u\in [0,1]$, not necessarily different from $0$ and $1$ (cf. also \cite{FGR} for further comments on this point). 
\end{remark}

Here we state our first result whose proof is given in Section~\ref{dimostra}. 
\begin{theorem}
\label{Teorema principale} Let (\ref{K1})-(\ref{K4}), (\ref{u00})-(\ref{u02})
and (\ref{G2})-(\ref{ipo:g5}) be satisfied. Then there exists unique%
\[
u\in H^{1}(0,T,H^{1}(\Omega),(H^{1}(\Omega))^{\ast})(\hookrightarrow
C([0,T],L^{2}(\Omega)))
\]
weak solution to (\ref{eq:CH}) in the sense of Definition \ref{def1}.

Furthermore, if $u_{i}$ $i\in\{1,2\}$, are two solutions to (\ref{eq:CH})-(\ref{eq:bc}) in the
sense of Definition \ref{def1} with initial data $u_{0i}$, $i\in\{1,2\}$,
then, for every $t\in\lbrack0,T]$, the following continuous dependence estimate:
\begin{equation}
\left\Vert u_{1}-u_{2}\right\Vert _{L^{\infty}(0,t,L^{2}(\Omega))}\leq
\exp(Ct)\left\Vert u_{01}-u_{02}\right\Vert _{L^{2}(\Omega)}
\label{dipContLInfty}%
\end{equation}
holds true, where $C>0$ does not depend on $t$ nor on $u_{01}$ and $u_{02}$.
\end{theorem}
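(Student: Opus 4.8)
The plan is to obtain existence via a Faedo--Galerkin scheme combined with energy estimates, to recover the physical bound $0\le u\le1$ by a truncation argument, and then to derive uniqueness together with the continuous dependence estimate \eqref{dipContLInfty} by a Gronwall argument on the difference of two solutions. The structural fact exploited everywhere is $\mu f''\equiv1$, so that $\mu(u)\nabla v=\mu(u)f''(u)\nabla u+\mu(u)\nabla w=\nabla u+\mu(u)\nabla w$: the singular potential $f$ never appears in \eqref{CHWEAK}, the principal part is simply $-\Delta u$ with homogeneous Neumann conditions, and the only effect of $f$ is the degeneracy $\mu(0)=\mu(1)=0$. First I would replace $\mu$ by a bounded, globally Lipschitz extension $\hat\mu:\mathbb R\to[0,\infty)$ with $\hat\mu=\mu$ on $[0,1]$ and $\hat\mu\equiv0$ on $\mathbb R\setminus[0,1]$, and $g(x,t,\cdot)$ by a bounded, globally Lipschitz extension $\hat g$ (same constant $L$) with $\hat g(x,t,s)\ge0$ for $s\le0$ and $\hat g(x,t,s)\le0$ for $s\ge1$; for this regularized problem, Galerkin approximations $u_n$ built on the Neumann Laplacian eigenfunctions solve an ODE system with locally Lipschitz right-hand side and hence exist on $[0,T]$ once a priori bounds are available (alternatively, one may use a Schauder fixed point freezing $w=K\ast(1-2\bar u)$).

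Testing the Galerkin equations with $u_n$, using $\|w_n\|_{W^{1,2}}\le r_2\|1-2u_n\|_{L^2}$ from \eqref{K3}, the boundedness of $\hat\mu$ and $\hat g$, Young's inequality and Gronwall gives $u_n$ bounded in $L^\infty(0,T;L^2(\Omega))\cap L^2(0,T;H^1(\Omega))$, and reading $\partial_t u_n$ off the equation gives a bound in $L^2(0,T;(H^1(\Omega))^\ast)$. Weak/weak-$\ast$ compactness and Aubin--Lions then produce $u$ with $u_n\to u$ strongly in $L^2(Q)$ and a.e., weakly in $L^2(0,T;H^1)$, and $\partial_t u_n\rightharpoonup\partial_t u$ in $L^2(0,T;(H^1)^\ast)$; continuity of $\hat\mu,\hat g$ with dominated convergence and \eqref{K3} (for $w_n\to w$) let us pass to the limit, so $u\in H^1(0,T;H^1,(H^1)^\ast)\hookrightarrow C([0,T];L^2)$ solves the regularized problem with $u(0)=u_0$. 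To obtain $0\le u\le1$ I would test with $\psi=(u-1)^+\in H^1(\Omega)$: the term $(\nabla u,\nabla(u-1)^+)=\|\nabla(u-1)^+\|_{L^2}^2\ge0$, the mobility term vanishes since $\hat\mu(u)\nabla(u-1)^+=\hat\mu(u)\,\mathbf{1}_{\{u>1\}}\nabla u=0$, the reaction term is $\le0$ because $\hat g(u)\le0$ on $\{u>1\}$, and $\langle\partial_t u,(u-1)^+\rangle=\tfrac12\tfrac{d}{dt}\|(u-1)^+\|_{L^2}^2$ by the convex chain rule; since $(u_0-1)^+=0$ by \eqref{u011}, Gronwall forces $u\le1$, and $\psi=u^-$ gives $u\ge0$. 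Hence $\hat\mu(u)=\mu(u)$ and $\hat g(u)=g(u)$ a.e., so $u$ solves \eqref{CHWEAK}; the regularity $w\in C([0,T];W^{1,\infty}(\Omega))$ then follows from \eqref{K3}, the bound $0\le u\le1$ and the continuity of $t\mapsto u(t)$ in every $L^p(\Omega)$, $p<\infty$.

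For uniqueness and continuous dependence, let $u_1,u_2$ be two solutions with data $u_{01},u_{02}$, set $u=u_1-u_2$ and $w_i=K\ast(1-2u_i)$. Subtracting the two weak formulations and testing with $\psi=u\in H^1(\Omega)$ (admissible, since $u_i\in L^2(0,T;H^1)$ and $\partial_t u\in L^2(0,T;(H^1)^\ast)$) gives, via the Lions--Magenes identity, $\tfrac12\tfrac{d}{dt}\|u\|_{L^2}^2+\|\nabla u\|_{L^2}^2=-(\mu(u_1)\nabla w_1-\mu(u_2)\nabla w_2,\nabla u)+(g(u_1)-g(u_2),u)$. Splitting $\mu(u_1)\nabla w_1-\mu(u_2)\nabla w_2=\mu(u_1)\nabla(w_1-w_2)+(\mu(u_1)-\mu(u_2))\nabla w_2$ and using $0\le\mu\le\tfrac14$, the Lipschitz bound $|\mu(u_1)-\mu(u_2)|\le|u|$ on $[0,1]$, the estimate $\|\nabla(w_1-w_2)\|_{L^2}=2\|\nabla(K\ast u)\|_{L^2}\le2r_2\|u\|_{L^2}$ from \eqref{K3}, the bound $\|\nabla w_2\|_{L^\infty}\le C$ coming from $w_2\in C([0,T];W^{1,\infty})$, and $\|g(u_1)-g(u_2)\|_{L^2}\le L\|u\|_{L^2}$ from \eqref{ipo:gLip}, Young's inequality absorbs $\tfrac12\|\nabla u\|_{L^2}^2$ and leaves $\tfrac{d}{dt}\|u\|_{L^2}^2\le C\|u\|_{L^2}^2$ with $C$ independent of $t$ and of the data. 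Gronwall gives $\|u(t)\|_{L^2}^2\le e^{Ct}\|u(0)\|_{L^2}^2$ for $t\in[0,T]$; taking square roots, the supremum over $[0,t]$ and renaming $C$ yields \eqref{dipContLInfty}, and the choice $u_{01}=u_{02}$ gives uniqueness.

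I expect the existence half to be the genuine obstacle, and within it the proof of $0\le u\le1$: this is precisely where the degeneracy $\mu(0)=\mu(1)=0$ and the sign condition \eqref{ipo:g5} on the reaction term are indispensable, since without \eqref{ipo:g5} the reaction could drive $u$ outside $[0,1]$ and the whole formulation, which deliberately avoids $f$, would lose meaning. Care is also needed in passing the nonlinear and nonlocal terms to the limit and in the $W^{1,\infty}$-in-time regularity of $w$. By contrast, once $0\le u\le1$ and $w\in C([0,T];W^{1,\infty})$ are in hand, uniqueness and \eqref{dipContLInfty} are just the Gronwall estimate above, the only mildly delicate point being the splitting of $\mu(u_1)\nabla w_1-\mu(u_2)\nabla w_2$ and the use of \eqref{K3}.
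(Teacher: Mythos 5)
Your proof is correct. The uniqueness half coincides with the paper's argument (same test function $u_1-u_2$, same splitting of $\mu_1\nabla w_1-\mu_2\nabla w_2$ into $\mu_1\nabla(w_1-w_2)+(\mu_1-\mu_2)\nabla w_2$, same use of \eqref{K3}, \eqref{ipo:gLip} and Gronwall), but the existence half takes a genuinely different route at the decisive step of recovering $0\le u\le1$. The paper runs a two-level approximation: a Faedo--Galerkin scheme inside an $\varepsilon$-regularized problem with non-degenerate mobility $\mu_\varepsilon\ge\varepsilon$ and an entropy $f_\varepsilon$ with $\mu_\varepsilon f_\varepsilon''=1+2a_\varepsilon$; testing with $v_\varepsilon=f_\varepsilon'(u_\varepsilon)+w_\varepsilon$ yields the uniform bound $\left\vert\int_\Omega f_\varepsilon(u_\varepsilon)\right\vert\le C$ (the delicate point being the control of $\int_\Omega g^1(u_\varepsilon)f_\varepsilon'(u_\varepsilon)$ via \eqref{ipo:gLip} and \eqref{ipo:g5}), and the quadratic growth of $f_\varepsilon$ outside $[0,1]$ then forces $\int_{\{u>1\}}(u-1)^2=0$ in the limit $\varepsilon\to0$. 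You instead use a single Galerkin layer on the degenerate equation, with $\mu$ extended by zero and $g$ by its boundary values, and obtain $0\le u\le1$ by a weak maximum principle, testing the limit equation with $(u-1)^+$ and $u^-$; this works precisely because in the form \eqref{CHWEAK} the principal part is the non-degenerate $-\Delta u$, the extended mobility annihilates the nonlocal drift on $\{u>1\}$ and $\{u<0\}$, and \eqref{ipo:g5} gives the reaction the right sign there. Your route is shorter and more elementary; what the paper's $\varepsilon$-layer buys is that the entropy and dissipation estimates ($\int_\Omega f_\varepsilon(u_\varepsilon)$ and $\mu_\varepsilon^{1/2}\nabla v_\varepsilon$ bounded) are exactly what is recycled in the proof of Theorem \ref{teo v reg} to construct the chemical potential $v=f'(u)+w$ and conclude $u\in(0,1)$ a.e., whereas your construction never produces $v$ and those later results would still require the $f_\varepsilon$ machinery. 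One shared loose end: the continuity $w\in C([0,T],W^{1,\infty}(\Omega))$ demanded by Definition \ref{def1} does not follow from \eqref{K3} together with continuity of $t\mapsto u(t)$ in $L^p(\Omega)$ for $p<\infty$ alone, since the case $p=\infty$ of \eqref{K3} would need $L^\infty$-continuity of $u$ in time; the paper is equally terse on this point, so it is not a defect of your proposal relative to the original.
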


The proof is given in Section \ref{dimostra}.

Under additional assumptions on the initial data $u_{0}$ and the function $g$
we obtain more regularity on $u$, as stated in the following result proved in Section~\ref{RRRR}.

\begin{theorem}
\label{teorego} Let the assumptions of Theorem
\ref{Teorema principale} be satisfied. Let $u$ be the weak solution to (\ref{eq:CH})-(\ref{eq:ic}) in
the sense of Definition \ref{def1}. Moreover, assume that $g$ and $u_{0}$ satisfy:
\begin{align}\label{ipo:gReg3}\tag{G4}
&\exists L  >0\text{ such that }\left\vert g(x,t_{1},s)-g(x,t_{1}%
,s)\right\vert \leq L\left\vert t_{1}-t_{2}\right\vert , \quad
\forall t_{1},t_{2}  \in\lbrack0,T]\text{, }\forall x\in\Omega,\,\forall s\in\lbrack
0,1]\text{,}
\end{align}%
\begin{equation}
u_{0}\in H^{2}(\Omega)\text{,} \label{u0h2}%
\end{equation}
and%
\begin{equation}
n\cdot\left(  \nabla(u_{0})+\mu(u_{0})\nabla K\ast(1-2u_{0})\right)  =0\text{
on }\partial\Omega. \label{u0h3}%
\end{equation}
Then $u\in L^{\infty}(0,T,H^{2}(\Omega))$.
\end{theorem}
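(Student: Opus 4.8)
\textbf{Proof strategy for Theorem \ref{teorego} (plan).}
The plan is to upgrade the weak solution obtained in Theorem \ref{Teorema principale} by establishing an $H^2$-in-space estimate uniform in time, via a formal differentiation of the equation in time followed by elliptic regularity. First I would work at the level of the Galerkin/approximation scheme used in Section \ref{dimostra} (or with a suitable time-regularization), so that all manipulations below are rigorous and the final bound passes to the limit by weak lower semicontinuity. The key observation is that \eqref{CHWEAK} can be read, for a.e.\ $t$, as an elliptic problem for $u(t)$: namely $-\Delta u + \nabla\cdot(\mu(u)\nabla w) = g(u) - u_t$ with homogeneous Neumann data $n\cdot(\nabla u + \mu(u)\nabla w)=0$ on $\partial\Omega$. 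If I can show $u_t\in L^\infty(0,T,L^2(\Omega))$, then the right-hand side $g(u)-u_t$ is in $L^\infty(0,T,L^2(\Omega))$; moreover $\nabla\cdot(\mu(u)\nabla w)=\mu'(u)\nabla u\cdot\nabla w + \mu(u)\Delta w$, and here $\nabla w=\nabla K\ast(1-2u)$ is bounded in $L^\infty$ by \eqref{K3} and $0\le u\le1$, while $\Delta w$ is controlled in $L^2$ by $\|u\|_{H^1}$ thanks to \eqref{K4}; since $u\in H^1(0,T,H^1(\Omega),(H^1)^\ast)$ already gives $u\in L^2(0,T,H^1(\Omega))$, but I need an $L^\infty$-in-time $H^1$ bound — which will come out of the same time-differentiated estimate. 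With the full right-hand side in $L^2(\Omega)$ for a.e.\ $t$, standard elliptic regularity for the Neumann Laplacian on a $C^{1,1}$ domain yields $u(t)\in H^2(\Omega)$ with $\|u(t)\|_{H^2}\le C(1+\|u_t(t)\|_{L^2}+\|u(t)\|_{H^1})$, and taking the essential supremum in $t$ closes the argument.

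The core of the proof is therefore the a priori estimate $u_t\in L^\infty(0,T,L^2(\Omega))$ together with $u\in L^\infty(0,T,H^1(\Omega))$. To get these I would differentiate \eqref{CHWEAK} formally in time and test with $\psi=u_t$. This produces $\tfrac12\tfrac{d}{dt}\|u_t\|_{L^2}^2 + \|\nabla u_t\|_{L^2}^2 = -(\partial_t(\mu(u)\nabla w),\nabla u_t) + (\partial_t g(u), u_t)$. The term $\partial_t g(u) = (\partial_t g)(x,t,u) + \partial_s g(x,t,u)\,u_t$ is handled by \eqref{ipo:gReg3} (giving the $L^\infty$-in-time Lipschitz-in-$t$ control, hence $|\partial_t g|\le L$) and \eqref{ipo:gUni} (giving $|\partial_s g|\le L$), so $(\partial_t g(u),u_t)\le L\|u_t\|_{L^2}|\Omega|^{1/2} + L\|u_t\|_{L^2}^2$. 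For the mobility term, $\partial_t(\mu(u)\nabla w) = \mu'(u)u_t\nabla w + \mu(u)\nabla w_t$ with $w_t = -2K\ast u_t$; using $\|\nabla w\|_{L^\infty}\le C$, $|\mu(u)|\le\tfrac14$, $|\mu'(u)|\le1$, and $\|\nabla w_t\|_{L^2}=2\|\nabla K\ast u_t\|_{L^2}\le 2r_2\|u_t\|_{L^2}$ by \eqref{K3}, I can bound this term by $C\|u_t\|_{L^2}\|\nabla u_t\|_{L^2}$, which is absorbed into $\|\nabla u_t\|_{L^2}^2$ by Young's inequality. A Gronwall argument then gives the desired $L^\infty(0,T,L^2)$ bound on $u_t$ provided $\|u_t(0)\|_{L^2}$ is finite; this is exactly where the hypotheses \eqref{u0h2} and the compatibility condition \eqref{u0h3} enter — they ensure that the initial value $u_t(0)=\Delta u_0 + \nabla\cdot(\mu(u_0)\nabla K\ast(1-2u_0)) + g(0,\cdot,u_0)$, read off from the equation at $t=0$, lies in $L^2(\Omega)$, and the Neumann compatibility \eqref{u0h3} is what makes $u_0$ an admissible ``first step'' so that $\Delta u_0\in L^2$ can be extracted (using $u_0\in H^2$) consistently with the flux boundary condition. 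Once $\|u_t\|_{L^\infty(0,T,L^2)}$ is bounded, integrating the identity also yields $\nabla u_t\in L^2(Q)$; and returning to the elliptic reading of \eqref{CHWEAK} at $t=0$ and then for a.e.\ $t$, the $H^1$-in-space bound uniform in $t$ follows either from the elliptic estimate itself (bootstrapping $H^2\hookrightarrow H^1$) or, more directly, by testing \eqref{CHWEAK} with $\psi=-\Delta u$ (at the Galerkin level) and using the already-established control on $u_t$.

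The main obstacle I anticipate is the rigorous justification of the time differentiation and of the test function $\psi=u_t$: the weak solution only has $u_t\in L^2(0,T,(H^1)^\ast)$ a priori, not enough regularity to differentiate \eqref{CHWEAK} in $t$ or to use $u_t$ as a test function directly. The clean way around this is to perform the whole estimate on the Galerkin approximants $u^n$ from the existence proof, where $u^n_t$ is smooth in time and lives in a finite-dimensional space, derive the bounds with constants independent of $n$ (which requires care that $\|u^n_t(0)\|_{L^2}$ is bounded uniformly — this uses the projection of the explicit expression for $u_t(0)$ above, hence again \eqref{u0h2}–\eqref{u0h3}), and then pass to the limit. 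A secondary technical point is that $\mu(u)=u(1-u)$ is only Lipschitz and the degeneracy at $u\in\{0,1\}$ must not be exploited in a way that requires a strict separation — but since every bound above uses only $0\le\mu(u)\le\tfrac14$ and $|\mu'(u)|\le1$, no separation property is needed here, and the argument is insensitive to $u$ touching the endpoints. Finally, one should check that the elliptic regularity step is legitimate on a merely $C^{1,1}$ domain with Neumann data: this is standard (e.g.\ via the results quoted in the Appendix) since the effective right-hand side is in $L^2(\Omega)$ and the principal part is the Laplacian, the lower-order term $\nabla\cdot(\mu(u)\nabla w)$ having been moved to the right-hand side.
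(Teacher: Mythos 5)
Your overall strategy coincides with the paper's: differentiate the equation in time to obtain $u_t\in L^{\infty}(0,T,L^{2}(\Omega))$, deduce $\nabla u\in L^{\infty}(0,T,L^{2}(\Omega))$, and then read \eqref{CHWEAK} as a Neumann problem for $\Delta u$ with right-hand side $\xi=u_t+(1-2u)\nabla u\cdot\nabla w+\mu\Delta w+g(u)$, closing with elliptic regularity; the roles you assign to \eqref{ipo:gReg3}, \eqref{u0h2}, \eqref{u0h3}, \eqref{K3} and \eqref{K4} are exactly those they play in the paper. The one structural difference is in how the formal step ``differentiate in $t$ and test with $u_t$'' is legitimized. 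The paper inserts an intermediate stage (Lemma \ref{lemma1}): it first tests the time-differentiated equation with $\Delta_{N}^{-1}U_t$, where $U_t=u_t-\bar u_t$, to get $u_t\in L^{\infty}(0,T,(H^{1}(\Omega))^{\ast})\cap L^{2}(0,T,L^{2}(\Omega))$, and only then tests with $U_t$ (Lemma \ref{lemma2}); the point is that the resulting inequality $\left\Vert U_t(t)\right\Vert_{L^2(\Omega)}^{2}+\int_0^t\left\Vert\nabla U_t\right\Vert_{L^2(\Omega)}^{2}\leq\left\Vert U_t(0)\right\Vert_{L^2(\Omega)}^{2}+C\int_0^t\left\Vert U_t\right\Vert_{L^2(\Omega)}^{2}$ only feeds a Gronwall argument if one already knows $U_t\in L^{2}(0,T,L^{2}(\Omega))$, and the whole computation is made rigorous by difference quotients on the limit solution itself rather than by returning to the approximation. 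You instead propose a single-shot estimate at the Galerkin level. That is viable in principle, but note that the Galerkin scheme of Section \ref{dimostra} is written for the $\varepsilon$-regularized problem \eqref{eq:CHrel} and applies the projection $P_n$ to $f_{\varepsilon}^{\prime}(u_n)$ and $g^{1}(u_n)$, so the time-differentiated identity does not have the clean quasilinear structure you use (in particular $\nabla v_n\neq f_{\varepsilon}^{\prime\prime}(u_n)\nabla u_n+\nabla w_n$), and one must also arrange $\left\Vert \partial_t u_n(0)\right\Vert_{L^2(\Omega)}$ to be bounded uniformly in $n$ and $\varepsilon$, which is delicate because the compatibility condition \eqref{u0h3} is nonlinear and is not preserved by $P_n$. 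Finally, in the elliptic step you pass over the regularity of the boundary datum: to invoke $H^{2}$ regularity one needs the Neumann datum $\mu(u)\,\partial w/\partial n$ to lie in $H^{1/2}(\partial\Omega)$ uniformly in time, which the paper obtains by showing separately that $\mu(u)\in L^{\infty}(0,T,H^{1/2}(\Omega))\cap L^{\infty}$ and $\partial w/\partial n\in L^{\infty}(0,T,H^{1/2}(\partial\Omega))\cap L^{\infty}(0,T,L^{\infty}(\partial\Omega))$; this verification should be added. Neither point is a wrong turn, but both are where the real work of the proof is hidden.
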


\begin{remark}
\label{superRemark}Since $u\in L^{\infty}(0,T,H^{2}(\Omega))\cap C(\left[
0,T\right]  ,L^{2}(\Omega))$, thanks to Lemma~\ref{final lemma} in the Appendix, we have $u\in
C(\left[  0,T\right]  ,H^{s}(\Omega))$ for every $s<2$ and hence $u\in
C(\left[  0,T\right]  ,L^{\infty}(\Omega))$ if $d\leq3$.
\end{remark}

If the initial data do not satisfy (\ref{u0h2})-(\ref{u0h3}) the solution
$u$ is more regular only on the set $[T_{0},T]$ for any $T_{0}>0$.

\begin{corollary}
\label{Cororego}Let $u$ be solution to (\ref{eq:CH})-(\ref{eq:ic}) in the
sense of Definition \ref{def1}. Let the assumptions of Theorem
\ref{Teorema principale} be satisfied. Assume that $g$ satisfies
(\ref{ipo:gReg3}). Then for every $T_{0}\in(0,T)$ $~u\in L^{\infty}%
(T_{0},T,H^{2}(\Omega))$.
\end{corollary}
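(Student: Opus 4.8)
The plan is to reduce Corollary~\ref{Cororego} to Theorem~\ref{teorego} by a translation/approximation argument in time, exploiting the smoothing effect of the parabolic-type equation away from $t=0$. First, I would fix $T_{0}\in(0,T)$ and observe that, since $u$ is the weak solution provided by Theorem~\ref{Teorema principale}, we have $u\in H^{1}(0,T,H^{1}(\Omega),(H^{1}(\Omega))^{\ast})\hookrightarrow C([0,T],L^{2}(\Omega))$ together with $0\le u\le 1$ a.e.\ and $w=K\ast(1-2u)\in C([0,T],W^{1,\infty}(\Omega))$. The aim is to pick some intermediate time $\tau\in(0,T_{0})$ at which $u(\tau)$ is already regular enough to serve as an admissible initial datum for Theorem~\ref{teorego} on the interval $[\tau,T]$, and then invoke that theorem (with the obvious time shift, which is harmless since $g$ satisfies \eqref{ipo:gReg3} uniformly on $[0,T]$, and assumptions \eqref{K1}--\eqref{K4}, \eqref{u00}--\eqref{u02} are time-independent) to conclude $u\in L^{\infty}(\tau,T,H^{2}(\Omega))\subset L^{\infty}(T_{0},T,H^{2}(\Omega))$.

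The key step is therefore to establish an intermediate regularization: for a.e.\ $\tau\in(0,T_{0})$ the slice $u(\tau)$ belongs to $H^{2}(\Omega)$ and satisfies the compatibility condition \eqref{u0h3} with $u_{0}$ replaced by $u(\tau)$. I would obtain the $H^{2}$ bound by revisiting the a~priori estimates behind Theorem~\ref{teorego}, but performing them with a cut-off in time: multiply the relevant differentiated equation by a weight $\eta(t)$ with $\eta\equiv 0$ near $t=0$ and $\eta\equiv 1$ on $[T_{0},T]$, so that the boundary terms at $t=0$ that would otherwise require $u_{0}\in H^{2}(\Omega)$ and \eqref{u0h3} disappear, at the cost of controllable lower-order terms $\int \eta'(t)\,\|\cdot\|^{2}$ which are absorbed using only the energy regularity \eqref{rego}--\eqref{xxx} already available. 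This yields $\partial_{t}u\in L^{2}(T_{0}/2,T;L^{2}(\Omega))$ (or the appropriate analogue used in Section~\ref{RRRR}), and then, reading \eqref{CHWEAK} as an elliptic problem for $u(t)$ for a.e.\ fixed $t$ — namely $-\Delta u = g(u)-\partial_{t}u + \nabla\cdot(\mu(u)\nabla w)$ with Neumann-type data coming from \eqref{eq:bc} — elliptic regularity together with the $W^{1,\infty}$ control of $w$ and the boundedness of $\mu$ gives $u(t)\in H^{2}(\Omega)$ for a.e.\ $t\in(0,T_{0})$, hence in particular at some admissible $\tau$; the natural boundary condition built into the weak formulation \eqref{CHWEAK} is exactly \eqref{u0h3} evaluated at $t=\tau$, so the compatibility condition comes for free.

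The main obstacle I anticipate is the first, qualitative step: justifying that $u(\tau)$ really is an \emph{admissible} initial datum — i.e.\ that it is meaningful to "restart" the problem at time $\tau$ and that the restarted solution coincides with $u|_{[\tau,T]}$. This requires a uniqueness statement on $[\tau,T]$, which is available from the continuous-dependence estimate \eqref{dipContLInfty} of Theorem~\ref{Teorema principale} applied on the shifted interval, so the restarted solution is forced to equal $u|_{[\tau,T]}$ and inherits its extra regularity. A secondary technical point is that the weighted-in-time estimate must be done at the level of the Galerkin approximation (or a difference-quotient scheme in time) used to prove Theorem~\ref{teorego}, rather than formally on $u$ itself, so that $\partial_{t}u$ is a legitimate test function; this is routine given the machinery of Section~\ref{RRRR} but should be acknowledged. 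Once these are in place, the corollary follows immediately by applying Theorem~\ref{teorego} on $[\tau,T]$ and restricting to $[T_{0},T]$.
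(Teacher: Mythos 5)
Your proposal is correct, and it rests on the same underlying mechanism as the paper's argument (parabolic smoothing lets one restart the regularity machinery at an a.e.\ intermediate time $\tau<T_{0}$), but you organize the restart differently and at a higher cost. You insist on finding $\tau$ at which $u(\tau)\in H^{2}(\Omega)$ and the compatibility condition (\ref{u0h3}) hold, so that Theorem \ref{teorego} can be applied verbatim on $[\tau,T]$; this forces you to first prove a weighted-in-time version of Lemma \ref{lemma1} (the cut-off term $\int\eta'\,\Vert U_{t}\Vert_{H_{0}^{-1}(\Omega)}^{2}$ is indeed finite by (\ref{rego}), so this stage closes) to get $u_{t}\in L^{2}_{loc}((0,T];L^{2}(\Omega))$, and then to run the pointwise-in-time elliptic regularity argument of Lemma \ref{lemma4} at a.e.\ fixed $\tau$ to upgrade $u(\tau)$ to $H^{2}$ with the natural boundary condition. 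The paper is leaner: it restarts \emph{inside} the chain of lemmas rather than at the level of the theorem's hypotheses. Since $u\in L^{2}(0,T,H^{1}(\Omega))$, one picks $T_{0}-\varepsilon$ with $u(T_{0}-\varepsilon)\in H^{1}(\Omega)$, which already suffices to run Lemma \ref{lemma1} on $[T_{0}-\varepsilon,T]$ (all that is needed of the datum there is $u_{t}(T_{0}-\varepsilon)\in (H^{1}(\Omega))^{\ast}$); the resulting $u_{t}\in L^{2}(T_{0}-\varepsilon,T;L^{2}(\Omega))$ furnishes some $s\in[T_{0}-\varepsilon,T_{0}]$ with $\Vert u_{t}(s)\Vert_{L^{2}(\Omega)}$ finite, which is exactly what Lemma \ref{lemma2} needs as starting value, and Lemmas \ref{lemma3}--\ref{lemma4} then run on $[s,T]$ with no further input. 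So in the paper's scheme $u(\tau)\in H^{2}$ and (\ref{u0h3}) at the restart time are outputs, never inputs. Three small remarks on your version: (i) your claim that the cut-off terms are absorbed ``using only (\ref{rego})--(\ref{xxx})'' is accurate for the first, $H_{0}^{-1}$-level estimate, which is the only one you actually need, but a cut-off analogue of Lemma \ref{lemma2} would require the output of that first stage rather than (\ref{rego}) alone; (ii) applying Theorem \ref{teorego} as a black box on $[\tau,T]$ formally also requires $0<\overline{u(\tau)}<1$, which holds by the argument of Lemma \ref{lemma media1} but should be recorded; (iii) the uniqueness/identification step you worry about is not actually needed, since $u|_{[\tau,T]}$ is itself a weak solution on $[\tau,T]$ with initial datum $u(\tau)$ in the sense of Definition \ref{def1}, so Theorem \ref{teorego} applies to it directly.
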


More regularity on $v$ can be obtained under an additional assumption on the
initial datum.

\begin{theorem}
\label{teo v reg}Let the assumption of Theorem \ref{Teorema principale} be
satisfied and let $u_{0}$ such that
\begin{equation}
f^{\prime}(u_{0})\in L^{2}(\Omega). \label{u0h4}%
\end{equation}
Then the weak solution $u$ given by Theorem \ref{Teorema principale} fulfills
\begin{equation}
v\in L^{\infty}(0,T,L^{2}(\Omega))\text{ \ \ \ \ }\nabla v\in L^{2}%
(0,T,L^{2}(\Omega))\text{.} \label{regv}%
\end{equation}

\end{theorem}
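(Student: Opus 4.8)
\quad Since $0\le u\le 1$, assumption \eqref{K3} already gives $w=K\ast(1-2u)\in L^{\infty}(0,T,W^{1,\infty}(\Omega))$ (this is part of Definition~\ref{def1}), so \eqref{regv} reduces to showing $f'(u)\in L^{\infty}(0,T,L^{2}(\Omega))$ and $\nabla f'(u)=f''(u)\nabla u\in L^{2}(0,T,L^{2}(\Omega))$; the a~priori bounds underlying Theorem~\ref{Teorema principale} only control a $\mu(u)$-weighted version of $\nabla v$, so a new estimate is required. The plan is to bound $\|f'(u(t))\|_{L^{2}(\Omega)}$ by testing \eqref{CHWEAK} with $\psi=2f'(u)f''(u)$. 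Using $\nabla\bigl(f'(u)f''(u)\bigr)=\bigl(f''(u)^{2}+f'(u)f'''(u)\bigr)\nabla u$, the pointwise identities $\mu(u)f''(u)=1$ and $\mu(u)^{2}f'''(u)=2u-1$ (special to the logarithmic $f$), and $\nabla u=\mu(u)(\nabla v-\nabla w)$, one formally obtains
\begin{equation}
\frac{d}{dt}\int_{\Omega}(f'(u))^{2}\,dx+2\int_{\Omega}\chi(u)\,|\nabla v|^{2}\,dx
=2\int_{\Omega}\chi(u)\,\nabla v\cdot\nabla w\,dx+2\int_{\Omega}g(u)\,f'(u)f''(u)\,dx,
\end{equation}
where $\chi(u):=1+(2u-1)f'(u)\ge 1$, because $2u-1$ and $f'(u)=\log\frac{u}{1-u}$ have the same sign.

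Next I would estimate the right-hand side so as to close a Gronwall inequality. For the first term, $2\chi(u)\nabla v\cdot\nabla w\le\chi(u)|\nabla v|^{2}+\chi(u)|\nabla w|^{2}$; the first summand is absorbed on the left, while, since $|(2u-1)f'(u)|\le|f'(u)|$, one has $\int_{\Omega}\chi(u)|\nabla w|^{2}\le\|\nabla w\|_{L^{\infty}}^{2}\bigl(|\Omega|+\|f'(u)\|_{L^{1}}\bigr)\le C\bigl(1+\|f'(u)\|_{L^{2}}\bigr)$. For the reaction term I would use \eqref{ipo:gLip}--\eqref{ipo:g5}: on $\{u>1/2\}$, where $f'(u)f''(u)\ge 0$, either $g(u)\le0$ and $g(u)f'(u)f''(u)\le0$, or $g(u)>0$, in which case $g(x,t,u)\le g(x,t,1)+L(1-u)\le L(1-u)$ yields $g(u)f'(u)f''(u)\le\frac{L}{u}\log\frac{u}{1-u}\le 2L|f'(u)|$; symmetrically on $\{u\le1/2\}$, using $g(x,t,u)\ge g(x,t,0)-Lu\ge-Lu$, one gets $g(u)f'(u)f''(u)\le 2L|f'(u)|$. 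Hence $\int_{\Omega}g(u)f'(u)f''(u)\le 2L\|f'(u)\|_{L^{1}}\le C\bigl(1+\|f'(u)\|_{L^{2}}\bigr)$. Using $\chi\ge1$, these bounds give
\[
\frac{d}{dt}\int_{\Omega}(f'(u))^{2}\,dx+\int_{\Omega}|\nabla v|^{2}\,dx\le C\Bigl(1+\int_{\Omega}(f'(u))^{2}\,dx\Bigr),
\]
so that, since $f'(u_{0})\in L^{2}(\Omega)$ by \eqref{u0h4}, Gronwall's lemma gives $\|f'(u)\|_{L^{\infty}(0,T,L^{2}(\Omega))}\le C_{T}$ and, after time integration, $\|\nabla v\|_{L^{2}(0,T,L^{2}(\Omega))}\le C_{T}$; combined with $w\in L^{\infty}(0,T,W^{1,\infty}(\Omega))$ this is exactly \eqref{regv}.

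The point requiring care is making this rigorous: $\psi=2f'(u)f''(u)$ need not lie in $H^{1}(\Omega)$, since $f''(u)$ is unbounded near $u=0,1$. I would therefore run the computation on the approximating problems used in the proof of Theorem~\ref{Teorema principale}, replacing $f$ by a regularization $f_{\delta}$ with bounded second derivative (e.g.\ $f_{\delta}=f$ on $[\delta,1-\delta]$, extended by quadratics outside, $\mu_{\delta}:=1/f_{\delta}''$): for such $f_{\delta}$ the test function is admissible, $0\le u_{\delta}\le1$, the coefficient $\chi_{\delta}:=1+\mu_{\delta}^{2}f_{\delta}'f_{\delta}'''$ remains bounded below by a positive constant (since $f_{\delta}'''\equiv0$ on the extensions), and $f_{\delta}'(u_{0})\to f'(u_{0})$ in $L^{2}(\Omega)$ with $|f_{\delta}'(u_{0})|\le|f'(u_{0})|$. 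The resulting bounds are uniform in $\delta$, and one passes to the limit $\delta\to0$ using the convergences from Section~\ref{dimostra} and the weak and weak-$*$ lower semicontinuity of the norms. The main obstacles are thus: (i) choosing the regularization so that the favourable sign of $f_{\delta}'f_{\delta}'''$ and the lower bound $\chi_{\delta}\ge c>0$ survive; and (ii) the elementary but somewhat delicate case analysis near $u=0,1$ by which \eqref{ipo:gLip}--\eqref{ipo:g5} force $g(u)f'(u)f''(u)$ to be controlled by $|f'(u)|$.
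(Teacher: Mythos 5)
Your formal computation is correct and is essentially the paper's own argument: the paper likewise tests the (approximate) equation with $f_{\varepsilon}''(u_{\varepsilon})f_{\varepsilon}'(u_{\varepsilon})$, obtains $\tfrac12\tfrac{d}{dt}\|f_{\varepsilon}'(u_{\varepsilon})\|_{L^{2}(\Omega)}^{2}$ plus a coercive gradient term, and controls the reaction term by exactly your case analysis ($g\le L(1-u)$ where $f'f''\ge0$, $g\ge -Lu$ where $f'f''\le0$, yielding $\int_{\Omega}g\,f''f'\le C\|f'(u)\|_{L^{2}(\Omega)}$). Your repackaging of the two gradient contributions into the single coefficient $\chi(u)=\mu^{2}\bigl(f''^{2}+f'f'''\bigr)=1+(2u-1)f'(u)\ge1$ multiplying $|\nabla v|^{2}$ is a tidier, equivalent form of the paper's estimates \eqref{step2}--\eqref{step4}, which instead keep $\int_{\Omega}|\nabla f_{\varepsilon}'(u_{\varepsilon})|^{2}$ on the left and recover $\nabla v=\nabla f'(u)+\nabla w$ only at the end.

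The one genuine gap is in the rigorous implementation. You assert that the solutions $u_{\delta}$ of the regularized problems satisfy $0\le u_{\delta}\le1$; they do not --- the bound $0\le u\le 1$ is recovered only in the limit of the approximation parameter (this is precisely why the paper extends $g$ to $g^{1}$ on all of $\mathbb{R}$ and defines $f_{\varepsilon},\mu_{\varepsilon}$ on all of $\mathbb{R}$). Without a uniform $L^{\infty}$ bound on $u_{\delta}$ you cannot write $\|\nabla w_{\delta}\|_{L^{\infty}(\Omega)}\le C$, which your argument needs both in $\int_{\Omega}\chi_{\delta}|\nabla w_{\delta}|^{2}\le\|\nabla w_{\delta}\|_{L^{\infty}}^{2}\bigl(|\Omega|+\|f_{\delta}'(u_{\delta})\|_{L^{1}}\bigr)$ and to keep the coefficients of type $(2u_{\delta}-1)$ bounded outside $[0,1]$: assumption \eqref{K3} with $p=\infty$ only converts an $L^{\infty}$ bound on $1-2u_{\delta}$ into one on $\nabla w_{\delta}$. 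The paper closes this by first running an Alikakos--Moser iteration (testing with $|u_{\varepsilon}|^{p-1}u_{\varepsilon}$ and letting $p\to\infty$, following \cite{BH2}) to obtain $\|u_{\varepsilon}\|_{L^{\infty}(\Omega)}\le C$ uniformly, \emph{before} the main estimate; you would need this step (or an equivalent substitute), together with the additional cases $u_{\delta}<0$ and $u_{\delta}>1$ in the sign analysis of $g^{1}f_{\delta}''f_{\delta}'$, which the paper handles via \eqref{G5bis}. With that insertion your proof coincides with the paper's.
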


\begin{remark}
\label{u not zero}As a consequence of Theorem \ref{teo v reg} the function
$v=f^{\prime}(u)+w$ is well defined. Hence $u\neq0$ and $u\neq1$ $a.e.$ in
$\Omega\times\lbrack0,T]$. Furthermore $u$ also satisfies the weak formulation
given by Definition \ref{def1} with
\[
\left\langle u_{t},\psi\right\rangle +(\mu(u)\nabla v,\nabla\psi
)=(g(u),\psi)\text{, \ \ }v=f^{\prime}(u)+w\text{,}%
\]
instead of (\ref{CHWEAK}).
\end{remark}

Corollary~\ref{Cororego} and Theorem~\ref{teo v reg}
are proved in Section~\ref{RRRR}.

In \cite[Theorem~2.1]{LP2} Londen and Petzeltov\`{a} obtained the separation
properties for the solution to (\ref{eq:CH})-(\ref{eq:ic}) with $g=0$. We
prove here the same results in the case $g$ satisfies (\ref{G2})-(\ref{ipo:g5}).

\begin{theorem}
\label{teo sep}Let the assumptions of Theorem \ref{teorego} be satisfied and
$d\leq3$. Then%
\begin{align}
&\forall T_{0}  \in(0,T)~\exists\, k>0\text{ such that }k\leq u(x,t)\leq
1-k\text{ }\label{SEP}
\text{for }a.a.~x  \in\Omega,t\in( T_{0},T)\text{.}
\end{align}
Furthermore, if%
\begin{equation}
\exists\,\tilde{k}>0\text{ such that }\tilde{k}\leq u_{0}\leq1-\tilde{k}%
\text{,}\label{SEP2}%
\end{equation}
then $T_{0}=0$.
\end{theorem}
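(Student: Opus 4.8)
\textbf{Proof plan for Theorem \ref{teo sep}.}

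The plan is to derive the separation property from the $H^2$-regularity provided by Theorem \ref{teorego} (resp.\ Corollary \ref{Cororego}) together with an estimate on $f'(u)$ obtained by testing the chemical-potential equation with a suitable monotone function of $u$. First I would note that under the assumptions of Theorem \ref{teorego} we have $u\in L^\infty(0,T,H^2(\Omega))$, hence by Remark \ref{superRemark} $u\in C([0,T],L^\infty(\Omega))$ since $d\le 3$, and moreover by Remark \ref{u not zero} the identity $v=f'(u)+w$ holds a.e.\ with $w=K\ast(1-2u)\in C([0,T],W^{1,\infty}(\Omega))$, so $w$ is uniformly bounded in $L^\infty(Q)$ by (K2)--(K3). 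The strategy is then: (i) show $f'(u)\in L^\infty(0,T,L^2(\Omega))$, in fact obtain a uniform-in-time bound; (ii) upgrade this to a pointwise bound $\|f'(u(t))\|_{L^\infty(\Omega)}\le M$ using elliptic regularity applied to $-\Delta u = (v - f'(u) - w)/1$ rewritten from \eqref{CHWEAK}, i.e.\ using that $u$ solves $u_t - \Delta u - \nabla\cdot(\mu(u)\nabla w) = g(u)$ and comparing; and (iii) conclude that since $f'(s)=\log\frac{s}{1-s}\to\pm\infty$ as $s\to 1^-,0^+$, a uniform bound $|f'(u)|\le M$ forces $\sigma(M)\le u\le 1-\sigma(M)$ for an explicit $\sigma(M)>0$.

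For step (i), the idea is to test the equation with $f'(u)$ itself (or rather, since $f'(u)$ is not an admissible test function near the pure phases, to work on the Yosida/truncated approximation used in Section \ref{dimostra}, or use the smoothness from $H^2$ directly). Formally, $\langle u_t, f'(u)\rangle = \frac{d}{dt}\int_\Omega f(u)$, the diffusion term gives $\int_\Omega \nabla u\cdot\nabla f'(u) = \int_\Omega f''(u)|\nabla u|^2 = \int_\Omega \frac{|\nabla u|^2}{u(1-u)}\ge 0$, the nonlocal term gives $\int_\Omega \mu(u)\nabla w\cdot\nabla f'(u) = \int_\Omega \nabla w\cdot \nabla u$ (using $\mu(u)f''(u)=1$), which is controlled by $\frac12\|\nabla u\|_{L^2}^2 + \frac12\|\nabla w\|_{L^2}^2$ and absorbed, and the reaction term $\int_\Omega g(u)f'(u)$ is the delicate one since $f'(u)$ is unbounded; here one uses \eqref{ipo:g5}, namely $g(x,t,0)\ge 0\ge g(x,t,1)$ together with the Lipschitz bound \eqref{ipo:gLip}, to get $g(x,t,s)f'(s)\le C(1+f(s))$ or a similar one-sided bound (because $g(s)$ has the right sign near the endpoints where $f'(s)$ blows up, the product is bounded above), and then Gronwall on $\int_\Omega f(u)$ closes the estimate. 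This yields $f'(u)\in L^\infty(0,T;L^2(\Omega))$, i.e.\ exactly \eqref{u0h4}-type regularity propagated in time, which is essentially Theorem \ref{teo v reg} combined with the argument.

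For step (ii), the key point — and I expect this to be the main obstacle — is to pass from the integral bound on $f'(u)$ to a uniform pointwise bound. The plan is to use that, by Theorem \ref{teorego}, $\Delta u \in L^\infty(0,T;L^2(\Omega))$ and $\nabla\cdot(\mu(u)\nabla w)\in L^\infty(0,T;L^2(\Omega))$ (the latter since $\mu(u)$ is bounded Lipschitz, $\nabla w\in W^{1,\infty}$, and $u\in H^2$), hence from Remark \ref{u not zero} one can write $f'(u) = v - w$ and bound $v = f'(u)+w$; more directly, one tests the equation $-\Delta(f'(u)) = \text{(lower order)}$ — obtained by applying $\nabla\cdot$ suitably — against $(f'(u)-M)^+$ for large $M$, a De Giorgi / Stampacchia truncation, to show the super-level sets $\{f'(u)>M\}$ have zero measure once $M$ exceeds an explicit constant depending only on the data; the one-sided sign of $g$ near the endpoints is again what makes the truncated energy inequality close. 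Alternatively, since $d\le 3$ and $u\in H^2\hookrightarrow C(\overline\Omega)$, and $f'$ is continuous and strictly monotone on $(0,1)$, one shows the range of $u(\cdot,t)$ stays in a compact subset of $(0,1)$ uniformly in $t\ge T_0$ by a continuation/barrier argument exploiting that $f(u(t))$ is bounded uniformly. Finally, for the last assertion: if \eqref{SEP2} holds then $f(u_0)\in L^\infty(\Omega)$ and $f'(u_0)\in L^\infty(\Omega)$, so the Gronwall argument in step (i) and the truncation in step (ii) both start from $t=0$ with finite constants, giving $T_0=0$; this uses no new idea beyond replacing the smoothing-in-time estimate of Corollary \ref{Cororego} by the genuine initial regularity of Theorem \ref{teorego}.
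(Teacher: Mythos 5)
Your steps (i) and (iii) are essentially sound --- (i) is in substance Theorem \ref{teo v reg}, and (iii) is immediate from the explicit form of $f'$ --- but the whole weight of the theorem rests on your step (ii), and there you do not have a proof. Both routes you sketch fail as stated. A Stampacchia truncation would have to be run on the equation satisfied by $f'(u)$, which, after writing $\nabla u=\mu(u)\nabla f'(u)$, reads $u_t-\nabla\cdot\bigl(\mu(u)\nabla(f'(u)+w)\bigr)=g(u)$: the diffusion coefficient $\mu(u)=u(1-u)$ degenerates exactly on the super-level sets $\{f'(u)>M\}$ you are trying to control, and you give no mechanism by which the truncated energy inequality closes, nor one that makes the resulting constant independent of the initial datum for $t\geq T_0>0$ (an $L^2\to L^\infty$ smoothing effect is required, not mere boundedness). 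The alternative ``continuation/barrier argument exploiting that $f(u(t))$ is bounded uniformly'' rests on a false premise: $f(s)=s\log s+(1-s)\log(1-s)$ is bounded on $[0,1]$, so a bound on $\int_\Omega f(u)$ carries no information; and even knowing $u(\cdot,t)\in C(\overline{\Omega})$ with $u>0$ a.e.\ only yields a $t$-dependent minimum $k(t)>0$ with no uniform lower bound. Likewise the $L^2$ bound on $f'(u)$ from step (i) only gives $0<u<1$ a.e.\ (Remark \ref{u not zero}), not separation.

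The paper closes precisely this gap by a Moser--Alikakos iteration on $M_r(t)=\Vert\log u(\cdot,t)\Vert_{L^r(\Omega)}$, following Londen and Petzeltov\`a: the differential inequality $\frac{d}{dt}M_1\leq-C_1M_1^2+C_2$ yields a bound on $M_1(t)$ for $t\geq T_0$ that is independent of $M_1(0)$ (this is the smoothing that makes an arbitrary $T_0>0$ admissible), and the family $\frac{d}{dt}M_r\leq-C_3r^{-2}M_r^2+C_4m_1M_r+C_5r$ is then iterated up to $r=\infty$. Two ingredients of that argument are absent from your plan and are genuinely needed because the reaction term destroys mass conservation: first, one must prove $0<\bar u(t)<1$ for all $t$ (Lemma \ref{lemma media1}, via the identity $\bar u_t=\int_\Omega g(u)$, the sign condition \eqref{ipo:g5} and Gronwall, giving $\bar u(t)\geq\bar u_0e^{-Lt}$), and deduce the uniform measure bound $\vert\{x:u(x,t)\geq b_0\}\vert\geq c_0>0$; second, this measure bound feeds the relative Poincar\'e inequality \eqref{dis:Poincare}, which converts $\int_\Omega\vert\nabla\log u\vert^2$ into $(M_1)^2$ and makes the differential inequality superlinear. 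The reaction term is then absorbed through the pointwise estimate $-g(u)/u\leq L$, coming from \eqref{ipo:gLip} and \eqref{ipo:g5}; this is the same sign mechanism you invoke in step (i), but it must be deployed at every level $r$ of the iteration, not only in the energy estimate for $f'(u)$.
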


\begin{remark}
If $u_{0}$ do not satisfy (\ref{u0h2}) or (\ref{u0h3}), using Corollary
\ref{Cororego} and applying Theorem \ref{teo sep} on the set $(t,T)$ where
$t>0$ is small enough, we can anyway obtain (\ref{SEP}).
\end{remark}

Theorem \ref{teo sep} is proved in Section \ref{Separazione}.

\section{Existence and uniqueness\label{dimostra}}

This section is devoted to the proof of Theorem~\ref{Teorema principale}. We first prove uniqueness of solutions by demonstrating 
estimate \eqref{dipContLInfty}, then we prove existence of solutions by approximating our problem  with a more regular problem $P_\varepsilon$ 
and then passing to the limit as $\varepsilon\to 0$ via suitable a-priori estimates and compactness results. 

\section{Uniqueness}

We now prove the uniqueness of the solution.
In the following formulas the symbol $C$ denotes a positive constant depending
on $T$, $K$, and $g$. It may vary even within the same line.

\begin{proof}
[Proof of (\ref{dipContLInfty})]Let $u_{i}$ and $u_{0i}$ be as in Theorem
\ref{Teorema principale}. Then
\begin{equation}
\left\langle \partial_{t}u_{i},\psi\right\rangle =-\left(  \nabla u_{i}%
+\mu_{i}\nabla w_{i},\nabla\psi\right)  +\left(  g(u_{i}),\psi\right)  \text{
}\forall\psi\in H^{1}(Q), \quad \hbox{a.e.~in }(0,T),  \label{a}%
\end{equation}
where $\mu_{i}=\mu(u_{i})=u_{i}(1-u_{i})$ and $w_{i}=K\ast(1-2u_{i})$.
Computing the difference of (\ref{a}) with $i=1$ and $i=$ $2,$ choosing
$\psi=u:=u_{1}-u_{2}$ and integrating on $(0,t)$, $t\in (0,T]$, we obtain%
\begin{align}
\frac{1}{2}\left\Vert u(t)\right\Vert _{L^{2}(\Omega)}^{2}-\frac{1}%
{2}\left\Vert u_{01}-u_{02}\right\Vert _{L^{2}(\Omega)}^{2}  &  =\int_{0}%
^{t}\left\langle \partial_{t}u,u\right\rangle \label{bb}\\
&  =-\int_{0}^{t}\int_{\Omega}\left\vert \nabla u\right\vert ^{2}\nonumber\\
&  -\int_{0}^{t}\int_{\Omega}(\mu_{1}\nabla w_{1}-\mu_{2}\nabla w_{2})\nabla
u\nonumber\\
&  +\int_{0}^{t}\int_{\Omega}(g\left(  u_{1}\right)  -g\left(  u_{2}\right)
)u\text{.}\nonumber
\end{align}
Using the bounds on $u_{1},u_{2},\mu_{1}$ and $\mu_{2}$ (see (\ref{def:mu})
and (\ref{xxx})) and assumption (\ref{K3}) we obtain the following estimates
\[
\left\vert \int_{\Omega}(\mu_{1}\nabla w_{1}-\mu_{2}\nabla w_{2})\nabla
u\right\vert \leq\frac{1}{2}\int_{\Omega}\left\vert \nabla u\right\vert
^{2}+\frac{1}{2}\int_{\Omega}\left\vert \mu_{1}\nabla w_{1}-\mu_{2}\nabla
w_{2}\right\vert ^{2}%
\]
and
\begin{align*}
\int_{\Omega}\left\vert \mu_{1}\nabla w_{1}-\mu_{2}\nabla w_{2}\right\vert
^{2}   \leq&\int_{\Omega}\left\vert \mu_{1}(\nabla w_{1}-\nabla
w_{2})\right\vert ^{2}+\int_{\Omega}\left\vert (\mu_{1}-\mu_{2})\nabla
w_{2}\right\vert ^{2}\\
\leq &\,C\left\Vert \nabla w_{1}-\nabla w_{2}\right\Vert _{L^{2}(\Omega)}%
^{2}\\
&  +\int_{\Omega}\left\vert (u_{1}-u_{2})(1-u_{1}-u_{2})\nabla w_{2}%
\right\vert ^{2}\\
 \leq &\,C\left\Vert \nabla w_{1}-\nabla w_{2}\right\Vert _{L^{2}(\Omega)}%
^{2}+C\left\Vert \nabla w_{2}\right\Vert _{L^{\infty}(\Omega)}^{2}\left\Vert
u\right\Vert _{L^{2}(\Omega)}^{2}\\
 \leq&\, Cr_{2}^{2}\left\Vert u\right\Vert _{L^{2}(\Omega)}^{2}+Cr_{\infty}%
^{2}\left\Vert u\right\Vert _{L^{2}(\Omega)}^{2}\leq C\left\Vert u\right\Vert
_{L^{2}(\Omega)}^{2}%
\end{align*}
where $r_{2}$ and $r_{\infty}$ as in (\ref{K3}). Furthermore, using
(\ref{ipo:gLip}) we have
\[
\int_{\Omega}(g(u_{1})-g(u_{2}))u\leq\int_{\Omega}Lu^{2}\leq L\left\Vert
u\right\Vert _{L^{2}(\Omega)}^{2}\text{,}%
\]
where $L$ as in (\ref{ipo:gLip}). So, thanks to (\ref{bb}),\ for every
$t\in(0,T)$, we obtain%
\[
\left\Vert u(t)\right\Vert _{L^{2}(\Omega)}^{2}\leq2\left\Vert u_{01}%
-u_{02}\right\Vert _{L^{2}(\Omega)}^{2}+C\int_{0}^{t}\left\Vert u\right\Vert
_{L^{2}(\Omega)}^{2}\text{.}%
\]
Using the Gronwall's Lemma, we get (\ref{dipContLInfty}), and so also uniqueness of solutions is proved.
\end{proof}

\section{Existence}

In order to show the existence of the solution to (\ref{eq:CH})--(\ref{eq:ic})
we study an approximate problem $P_{\varepsilon}$ depending on a parameter
$\varepsilon$. We prove the existence of the solution $u_{\varepsilon}$ to
$P_{\varepsilon}$ and, finally, we obtain $u$ as limit (for $\varepsilon
\rightarrow0$) of $u_{\varepsilon}$ in a proper functional space.

\subsection{Approximate problem $P_{\varepsilon}$\label{commenti}}

We start extending the domain of the function $g(x,t,s)$ to every $s\in%
\mathbb{R}
$ since we cannot prove that the solution $u_{\varepsilon}$ to the approximate
problem satisfies the condition $u_{\varepsilon}\in\lbrack0,1]$ for
$a.a.~x\in\Omega,t\in\lbrack0,T]$. Let us define the function $g^{1}%
:\Omega\times%
\mathbb{R}
^{+}\times%
\mathbb{R}
\rightarrow%
\mathbb{R}
$:%
\[
\left\{
\begin{array}
[c]{lll}%
g^{1}(x,t,s)=g(x,t,0) & \forall x\in\Omega & \forall t\in%
\mathbb{R}
^{+},~s\leq0\\
g^{1}(x,t,s)=g(x,t,s) & \forall x\in\Omega & \forall t\in%
\mathbb{R}
^{+},~s\in\lbrack0,1]\\
g^{1}(x,t,s)=g(x,t,1) & \forall x\in\Omega & \forall t\in%
\mathbb{R}
^{+},~s\geq1
\end{array}
\right.  \text{.}%
\]

We remark that $g^{1}$ satisfies (\ref{G2})-(\ref{ipo:g5}). Furthermore%

\begin{equation}
\left\vert g^{1}(x,t,s)\right\vert \leq C\quad\forall s\in%
\mathbb{R}
~\forall(x,t)\in Q \label{ipo:g3bis}%
\end{equation}
where $C$ as in (\ref{ipo:Gi3}) and
\begin{align}
g^{1}(x,t,s_{1})   \geq0\geq g^{1}(x,t,s_{2})~\label{G5bis}\quad
\forall t   \in%
\mathbb{R}
^{+},\,\forall x\in\Omega,\,\forall s_{1}\leq0,\, \forall s_{2}\geq1\text{.}
\end{align}

Let us consider the approximate problem $P_{\varepsilon}$: find a solution $u$ (we do not use the symbol $u_\varepsilon$ for simplicity of notation)
to%
\begin{equation}
\left\langle \partial_t u,\psi\right\rangle +\left(
\mu_{\varepsilon}\nabla v,\nabla\psi\right)  =\left(  g^{1}(u),\psi\right)
\text{ }\forall\psi\in H^{1}(\Omega), \quad \hbox{a.e.~in }(0,T)\text{,} \label{eq:CHrel}%
\end{equation}%
\begin{equation}
v=f_{\varepsilon}^{\prime}(u)+w\quad \hbox{a.e.~in }Q \label{eq:vrel}%
\end{equation}%
\begin{equation}
w=K\ast(1-2u)\quad \text{a.e.~in }Q\text{,} \label{eq:wrel}%
\end{equation}%
\begin{equation}
n\cdot\mu_{\varepsilon}\nabla v=0\quad \text{a.e.~on }\Gamma\text{,}
\label{eq:bcrel}%
\end{equation}%
\begin{equation}
u(0,x)=u_{0}(x), \quad \text{for a.a.}~x\in\Omega\text{,} \label{eq:icrel}%
\end{equation}
where
\begin{equation}
\mu_{\varepsilon}=\max\{\mu+\varepsilon,\varepsilon\} \label{def:mu-epsilon}%
\end{equation}
and $f_{\varepsilon}$ is the solution to the following Cauchy-problem:%
\begin{equation}
f_{\varepsilon}^{\prime\prime}=\left(  1+2a_{\varepsilon}\right)  \frac{1}%
{\mu_{\varepsilon}}\text{ ,~}f_{\varepsilon}^{\prime}(\frac{1}{2})=f^{\prime
}(\frac{1}{2}),\text{ and }f_{\varepsilon}(\frac{1}{2})=f(\frac{1}{2})\text{,}
\label{def:f-epsiolon}%
\end{equation}
where $a_{\varepsilon}=\frac{\left(  1+4\varepsilon\right)  ^{1/2}-1}{2}$.
Thanks to (\ref{def:mu}) and (\ref{def:mu-epsilon}), we have
\begin{equation}
\mu_{\varepsilon}(s)=\left\{
\begin{array}
[c]{ll}%
\varepsilon & \text{for }s<0\\
\left(  s+a_{\varepsilon}\right)  \left(  1+a_{\varepsilon}-s\right)  &
\text{for }s\in\lbrack0,1]\\
\varepsilon & \text{for }s>0
\end{array}
\right.  \text{.} \label{yui}%
\end{equation}
Hence, $\mu_{\varepsilon}$ is continuous. We remark that $\mu_{\varepsilon
}(s)$ is not decreasing for $s\leq1/2$ and not increasing for $s\geq1/2$. This
yields
\begin{equation}
\varepsilon\leq\mu_{\varepsilon}\leq\mu_{\varepsilon}(1/2)=\frac
{1+4\varepsilon}{4}\text{.} \label{mu-eps-lim}%
\end{equation}
From (\ref{def:f-epsiolon}) and (\ref{yui}) it follows%
\begin{equation}
f_{\varepsilon}^{\prime\prime}(s)=\left\{
\begin{array}
[c]{ll}%
\frac{1+2a_{\varepsilon}}{\varepsilon} & \text{for }s<0\\
\frac{1+2a_{\varepsilon}}{\left(  s+a_{\varepsilon}\right)  \left(
1+a_{\varepsilon}-s\right)  } & \text{for }s\in\lbrack0,1]\\
\frac{1+2a_{\varepsilon}}{\varepsilon} & \text{for }s>0
\end{array}
\right.  \label{we giovane}%
\end{equation}
and, in particular,
\begin{equation}
0<f_{\varepsilon}^{\prime\prime}(s)\leq\frac{1+2a_{\varepsilon}}{\varepsilon
}\text{.} \label{erba}%
\end{equation}
Furthermore $f_{\varepsilon}^{\prime\prime}$ satisfies the symmetry property%
\begin{equation}
f_{\varepsilon}^{\prime\prime}\left(  \frac{1}{2}+s\right)  =f_{\varepsilon
}^{\prime\prime}\left(  \frac{1}{2}-s\right)  ~\forall s\in%
\mathbb{R}
\text{.} \label{sim1}%
\end{equation}
Thanks to (\ref{erba}), $f_{\varepsilon}^{\prime}$ is increasing and, thanks
to $f_{\varepsilon}^{\prime}(1/2)=f(1/2)=0$, $f_{\varepsilon}^{\prime}(s)<0$
for $s<1/2$ and $f_{\varepsilon}^{\prime}(s)>0$ for $s>1/2$. Using
(\ref{we giovane}) we now obtain
\begin{equation}
\left\{
\begin{array}
[c]{lll}%
f_{\varepsilon}^{\prime}(s)<0 & \text{for} & s<0\\
f_{\varepsilon}^{\prime}(s)=\ln\left(  \frac{a_{\varepsilon}+s}%
{1+a_{\varepsilon}-s}\right)  & \text{for} & s\in\lbrack0,1]\\
f_{\varepsilon}^{\prime}(s)>0 & \text{for} & s>1\text{.}%
\end{array}
\right.  \label{propf}%
\end{equation}
Furthermore $f_{\varepsilon}^{\prime}$ satisfies%
\begin{equation}
f_{\varepsilon}^{\prime}\left(  \frac{1}{2}+s\right)  =-f_{\varepsilon
}^{\prime}\left(  \frac{1}{2}-s\right)  ~\forall s\in%
\mathbb{R}
\text{.} \label{sim3}%
\end{equation}
Since $f_{\varepsilon}^{\prime\prime}\leq\frac{1+2a_{\varepsilon}}%
{\varepsilon}$ and $f_{\varepsilon}^{\prime}(1/2)=0$, we have $f_{\varepsilon
}^{\prime}(s)\leq\frac{1+2a_{\varepsilon}}{\varepsilon}(s-1/2)$ for $s\geq
1/2$. So, using (\ref{sim3}), we get
\begin{equation}
\left\vert f_{\varepsilon}^{\prime}(s)\right\vert \leq\frac{1+2a_{\varepsilon
}}{\varepsilon}\left\vert s-1/2\right\vert ~\forall s\in%
\mathbb{R}
\text{.} \label{qwe}%
\end{equation}
As a consequence of (\ref{propf}) $s=\frac{1}{2}$ minimizes $f_{\varepsilon
}(s)$. From (\ref{sim3}) we have
\begin{equation}
f_{\varepsilon}\left(  \frac{1}{2}+s\right)  =f_{\varepsilon}\left(  \frac
{1}{2}-s\right)  ~\forall s\in%
\mathbb{R}
\text{.} \label{sim4}%
\end{equation}
Now, we show that
\begin{equation}
f_{\varepsilon}(s)\geq\frac{1}{2\varepsilon}s^{2}-c_{\varepsilon}~\forall s\in%
\mathbb{R}
, \label{propf2}%
\end{equation}
where $c_{\varepsilon}$ is a positive constant depending on $\varepsilon$. We
start showing
\begin{equation}
f_{\varepsilon}(s)\geq\frac{1+a_{\varepsilon}}{2\varepsilon}\left(
s-1/2\right)  ^{2}-c_{\varepsilon}^{\prime}~\forall s\in%
\mathbb{R}
\label{tgb}%
\end{equation}
where $c_{\varepsilon}^{\prime}$ is a positive constant depending on
$\varepsilon$. We prove (\ref{tgb}) for $s>1/2$; the proof for $s<1/2$ can be
obtained using (\ref{sim4}). As a consequence of (\ref{we giovane}) we have
$f_{\varepsilon}^{\prime}(s)=\frac{1+2a_{\varepsilon}}{\varepsilon
}(s-1)+f_{\varepsilon}^{\prime}(1)$, $s>1$. Furthermore $f_{\varepsilon
}^{\prime}(s)\geq0$ for $s>1/2$ as\ a consequence of (\ref{propf}). Hence
$f_{\varepsilon}^{\prime}(s)\geq\frac{1+2a_{\varepsilon}}{\varepsilon}%
s-\frac{1+2a_{\varepsilon}}{\varepsilon}~\forall s>1/2$ (the right term is
negative for $s\in\lbrack1/2,1]$). From the last inequality follows by
integration 
\begin{align*}
f_{\varepsilon}(s)-f_{\varepsilon}(1/2)  &  \geq\frac{1+2a_{\varepsilon}%
}{2\varepsilon}s^{2}-\frac{1+2a_{\varepsilon}}{\varepsilon}s-\frac
{1+2a_{\varepsilon}}{2\varepsilon}\frac{1}{4}+\frac{1+2a_{\varepsilon}%
}{\varepsilon}\frac{1}{2}\\
&  \geq\frac{1+2a_{\varepsilon}}{2\varepsilon}s^{2}-\delta s^{2}%
-\frac{1+2a_{\varepsilon}}{2\varepsilon}\frac{1}{4\delta}-\frac
{1+2a_{\varepsilon}}{2\varepsilon}\frac{1}{4}+\frac{1+2a_{\varepsilon}%
}{\varepsilon}\quad \forall\delta>0\text{.}%
\end{align*}
We take into account $\frac{1+2a_{\varepsilon}}{2\varepsilon}>\frac
{1+a_{\varepsilon}}{2\varepsilon}$, choose $\delta$ suitably and get
(\ref{tgb}). Hence,
\begin{align*}
\frac{1+a_{\varepsilon}}{2\varepsilon}\left(  s-1/2\right)  ^{2}  &
=\frac{1+a_{\varepsilon}}{2\varepsilon}(s^{2}-s-1/4)\\
&  \geq\frac{1+a_{\varepsilon}}{2\varepsilon}((1-\delta)s^{2}-1/4-\frac
{1}{8\delta})\quad \forall\delta>0\,.
\end{align*}
Choosing $\delta$ suitably small and using $\frac{1+a_{\varepsilon}}{2\varepsilon
}>\frac{1}{2\varepsilon}$ we have (\ref{propf2}).

\subsection{Existence of the solution to the approximate
problem\label{esist prob appr}}

The following lemma states the existence of a solution to (\ref{eq:CHrel}%
)-(\ref{eq:icrel}) for a fixed $\varepsilon>0$ small enough.

\begin{lemma}
\label{probdisc}Let \ $\varepsilon<\frac{1}{2r_{2}}$ ($r_{2}$ as in
(\ref{K3})). Let (\ref{K1})-(\ref{K3}), (\ref{ipo:gLip}), (\ref{G2}) and
(\ref{ipo:g3bis}) be satisfied. Then there exists
\[
u\in H^{1}(0,T,H^{1}(\Omega),(H^{1}(\Omega))^{\ast})\cap L^{\infty}%
(0,T,L^{2}(\Omega))
\]
solution to (\ref{eq:CHrel})-(\ref{eq:icrel}) such that
\[
\left\Vert \mu_{\varepsilon}^{1/2}(u)\left\vert \nabla v\right\vert
\right\Vert _{L^{2}(0,T,L^{2}(\Omega))}\leq C
\]
where $C$ is a positive constant depending on $\varepsilon$.
\end{lemma}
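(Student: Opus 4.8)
The plan is to prove Lemma \ref{probdisc} by a Faedo--Galerkin approximation combined with a-priori estimates and a compactness argument. First I would introduce a Galerkin basis $\{\psi_j\}_{j\in\mathbb{N}}$ of $H^1(\Omega)$, say the eigenfunctions of the Neumann Laplacian, and look for a finite-dimensional approximation $u_n(t)=\sum_{j=1}^n c_j^n(t)\psi_j$ solving the projection of \eqref{eq:CHrel} onto $\mathrm{span}\{\psi_1,\dots,\psi_n\}$, with $v_n=f_\varepsilon'(u_n)+K\ast(1-2u_n)$ and with initial datum the orthogonal projection of $u_0$. Since $f_\varepsilon'$ is globally Lipschitz by \eqref{erba}, $g^1$ is bounded and Lipschitz by \eqref{ipo:g3bis}--\eqref{ipo:gLip}, $\mu_\varepsilon$ is continuous and bounded by \eqref{mu-eps-lim}, and the convolution operator is bounded on every $L^p$ by \eqref{K3}, the resulting system of ODEs for $(c_j^n)$ has a locally Lipschitz right-hand side, so Cauchy--Lipschitz gives a unique local solution; the a-priori bound below extends it to $[0,T]$.

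The key a-priori estimate is obtained by testing the Galerkin equation with $\psi=v_n$ (which is legitimate at the discrete level since one tests with elements of the finite-dimensional space, and $v_n$ lies in it up to the nonlinear terms — more precisely one tests with $u_n$ and with the projection of $f_\varepsilon'(u_n)+w_n$; the standard trick is $\langle\partial_t u_n,f_\varepsilon'(u_n)\rangle=\frac{d}{dt}\int_\Omega f_\varepsilon(u_n)$). This produces
\begin{equation*}
\frac{d}{dt}\Big(\int_\Omega f_\varepsilon(u_n)+\tfrac12\int_\Omega w_n u_n\Big)+\int_\Omega\mu_\varepsilon(u_n)|\nabla v_n|^2 = \int_\Omega g^1(u_n)\,v_n,
\end{equation*}
using the symmetry of $K$ to handle the $w_n$ term. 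The coercivity bound \eqref{propf2}, $f_\varepsilon(s)\ge\frac{1}{2\varepsilon}s^2-c_\varepsilon$, controls $\|u_n\|_{L^2}^2$ from the left; the term $\int_\Omega w_n u_n$ is controlled by $\|w_n\|_{L^2}\|u_n\|_{L^2}\le r_2\|u_n\|_{L^2}^2$ via \eqref{K3}, and here the hypothesis $\varepsilon<\frac{1}{2r_2}$ is exactly what guarantees $\frac{1}{2\varepsilon}-\frac{r_2}{2}>0$, so the left-hand side still dominates $\|u_n\|_{L^2}^2$. The right-hand side is estimated by Young's inequality, absorbing $\int g^1 v_n$: writing $v_n=f_\varepsilon'(u_n)+w_n$, the part with $w_n$ is bounded using boundedness of $g^1$ and \eqref{K3}, while $\int g^1 f_\varepsilon'(u_n)$ is bounded using $|f_\varepsilon'(s)|\le\frac{1+2a_\varepsilon}{\varepsilon}|s-1/2|$ from \eqref{qwe} together with boundedness of $g^1$ and Young's inequality. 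Integrating in time and applying Gronwall then yields, with constants depending on $\varepsilon$,
\begin{equation*}
\|u_n\|_{L^\infty(0,T;L^2(\Omega))}\le C,\qquad \|\mu_\varepsilon^{1/2}(u_n)\,|\nabla v_n|\|_{L^2(0,T;L^2(\Omega))}\le C.
\end{equation*}
Since $\mu_\varepsilon\ge\varepsilon$, the second bound also gives $\|\nabla v_n\|_{L^2(0,T;L^2(\Omega))}\le C_\varepsilon$, whence $v_n$ is bounded in $L^2(0,T;H^1(\Omega))$ after one estimates its mean (using $\|w_n\|_{L^2}\le C$ and $\|f_\varepsilon'(u_n)\|_{L^2}\le C$). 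Comparing in the equation, $\partial_t u_n$ is bounded in $L^2(0,T;(H^1(\Omega))^\ast)$.

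Finally I would pass to the limit $n\to\infty$: by the above bounds and Aubin--Lions, up to a subsequence $u_n\to u$ strongly in $L^2(0,T;L^2(\Omega))$ and a.e. in $Q$, $u_n\rightharpoonup u$ in $H^1(0,T,H^1(\Omega),(H^1(\Omega))^\ast)$, and $v_n\rightharpoonup v$ in $L^2(0,T;H^1(\Omega))$. A.e. convergence plus continuity and boundedness of $f_\varepsilon'$, $g^1$, $\mu_\varepsilon$ identifies the nonlinear limits (dominated convergence for $f_\varepsilon'(u_n)$, $g^1(u_n)$; for the flux term $\mu_\varepsilon(u_n)\nabla v_n$ one writes it as $\mu_\varepsilon(u_n)\to\mu_\varepsilon(u)$ strongly in every $L^p$ by dominated convergence times $\nabla v_n\rightharpoonup\nabla v$ weakly in $L^2$), while the convolution term passes to the limit by \eqref{K3} and strong $L^2$ convergence of $u_n$. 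The initial condition is recovered from the $C([0,T];L^2)$ embedding. The lower-semicontinuity of the norm gives the stated bound on $\mu_\varepsilon^{1/2}(u)|\nabla v|$. The main obstacle I anticipate is the bookkeeping in the basic energy estimate — in particular verifying that testing with $v_n$ is admissible in the Galerkin scheme (one must work with projections of the nonlinear terms) and checking carefully that the smallness condition $\varepsilon<\frac{1}{2r_2}$ really suffices to keep the $\|u_n\|_{L^2}^2$ coefficient positive after the $w_n u_n$ term is absorbed; everything else is routine given the preparatory estimates \eqref{erba}--\eqref{propf2} already established.
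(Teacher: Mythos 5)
Your proposal follows essentially the same route as the paper's proof: a Faedo--Galerkin scheme, the key energy estimate obtained by testing with $v_n$, coercivity from \eqref{propf2} with the smallness condition $\varepsilon<\frac{1}{2r_{2}}$ used to absorb the convolution term, Gronwall, compactness and identification of the nonlinear limits, and lower semicontinuity (Fatou) for the final bound. The only discrepancies are cosmetic: the paper's conserved quantity is $\int_{\Omega}\int_{\Omega}K(x-y)u_{n}(x)(1-u_{n}(y))$ rather than your $\frac{1}{2}\int_{\Omega}w_{n}u_{n}$ (they differ by a term linear in $u_{n}$ whose time derivative is harmlessly controlled), and before invoking Aubin--Lions you should record the bound on $\nabla u_{n}$ in $L^{2}(Q)$, which the paper extracts from the $\nabla v_{n}$ bound using $f_{\varepsilon}''\geq c>0$.
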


\begin{proof}
The argument is based on a Faedo-Galerkin's approximation scheme. We introduce
the family $\{e_{i}\}_{i\in%
\mathbb{N}
}$ of eigenfunctions of $-\Delta+Id:V\rightarrow V^{\ast}$ as a Galerkin base
in $V=H^{1}(\Omega)$. We define the orthogonal projector $P_{n}:H=L^{2}%
(\Omega)\rightarrow V_{n}=span(\{e_{i}\}_{i=1}^{n})$ and $u_{0n}=P_{n}u_{0}$.
We then look for functions of the form \
\[
u_{n}(t)=\sum_{k=1}^{n}\alpha_{k}(t)e_{k}\text{ and }v_{n}(t)=\sum_{k=1}%
^{n}\beta_{k}(t)e_{k}%
\]
which solve the following approximating problem%
\begin{align}
(u_{n}^{\prime},\psi)+(\mu_{\varepsilon}(u_{n})\nabla v_{n},\nabla\psi)  &
=(g_{n}^{1},\psi)\text{ }\forall\psi\in V_{n}\label{eq:CHRelDisc}\\
\nonumber
v_{n}  &  =P_{n}(K\ast(1-2u_{n})+f_{\varepsilon}^{\prime}(u_{n}))\\
\nonumber
g_{n}^{1}  &  =P_{n}\left(  g^{1}(u_{n})\right) \\
u_{n}(0)  &  =u_{0n}\text{.} \label{yu}%
\end{align}
This approximating problem is equivalent to solve a Cauchy problem for a
system of ODEs in the $n$ unknowns ($\alpha_{i}$). As a consequence of
(\ref{def:mu-epsilon}), (\ref{G2}), (\ref{ipo:gLip}) and (\ref{def:f-epsiolon}%
), for every $\psi\in V_{n}$, the functions $(m(u_{n})\nabla v_{n},\nabla
\psi)$ and $(g_{n},\psi)$ are locally Lipschitz with respect to the variables
$\alpha_{i}$ uniformly in $t$. Hence there exists $T_{n}\in%
\mathbb{R}
_{+}$ such that system (\ref{eq:CHRelDisc}) has an unique solution $\alpha
_{1},\dots,\alpha_{n},\beta_{1},\dots,\beta_{n}\in C^{1}([0,T_{n});%
\mathbb{R}
)$.

We now want to prove a-priori estimates for $u_{n}$ uniformly in $n$. Henceforth we shall denote
by $C$ a positive constant which depend on $\varepsilon$, but it is
independent of $n$ and $t$. The values of $C$ may possibly vary even within
the same line. We
choose $\psi=v_{n}$ as test function and get%
\[
(u_{n}^{\prime},v_{n})+(\mu_{\varepsilon}(u_{n})\nabla v_{n},\nabla
v_{n})=(g_{n}^{1},v_{n})\text{.}%
\]
Thus,%
\begin{align*}
(u_{n}^{\prime},v_{n})  &  =(u_{n}^{\prime},f_{\varepsilon}^{\prime}%
(u_{n}))+(u_{n}^{\prime},K\ast(1-2u_{n}))\\
&  =\frac{d}{dt}\left(  \int_{\Omega}f_{\varepsilon}(u_{n})+\int_{\Omega}%
\int_{\Omega}K(x-y)u_{n}(x)(1-u_{n}(y))\right)  \text{.}%
\end{align*}
From this follows by integration on $(0,t)$:%
\begin{align}
&  \left(  \int_{\Omega}f_{\varepsilon}(u_{n})+\int_{\Omega}\int_{\Omega
}K(x-y)u_{n}(x)(1-u_{n}(y))\right)  (t)\label{Test1}+\int_{0}^{t}\int_{\Omega}\mu_{\varepsilon}(u_{n})\left\vert \nabla
v_{n}\right\vert ^{2}\\
&  =\int_{0}^{t}(g_{n}^{1},v_{n})+\left(  \int_{\Omega}f_{\varepsilon}%
(u_{n})+\int_{\Omega}\int_{\Omega}K(x-y)u_{n}(x)(1-u_{n}(y))\right)
(0)\text{.}\nonumber
\end{align}
Thanks to (\ref{qwe}) we have $\left\vert f_{\varepsilon}^{\prime
}(s)\right\vert \leq C\left\vert s\right\vert +C$. Due to (\ref{ipo:g3bis}), we have%
\begin{equation}
(g_{n}^{1},f_{\varepsilon}^{\prime}(u_{n}))\leq C+C\left\Vert u_{n}\right\Vert
_{H}^{2}\text{.} \label{tuc}%
\end{equation}
Using (\ref{propf2}) and (\ref{K3}), we obtain, for $\delta>0$ to be
announced,%
\begin{align}
&  \int_{\Omega}\int_{\Omega}K(x-y)u_{n}(x)(1-u_{n}(y))\label{ert}\\
+\int_{\Omega}f_{\varepsilon}(u_{n})  &  \geq\frac{1}{2\varepsilon}%
\int_{\Omega}u_{n}{}^{2}-c_{\varepsilon}+(K\ast(1-u_{n}),u_{n})_{H}\nonumber\\
&  \geq\frac{1}{2\varepsilon}\left\Vert u_{n}\right\Vert _{H}^{2}%
-c_{\varepsilon}-r_{2}\left\Vert u_{n}\right\Vert _{H}\left\Vert
1-u_{n}\right\Vert _{H}\nonumber\\
&  \geq\left(  \frac{1}{2\varepsilon}-r_{2}\right)  \left\Vert u_{n}%
\right\Vert _{H}^{2}-C_{\varepsilon}-r_{2}\left\vert \Omega\right\vert
\left\Vert u_{n}\right\Vert _{H}\nonumber\\
&  \geq\left(  \frac{1}{2\varepsilon}-r_{2}-\delta\right)  \left\Vert
u_{n}\right\Vert _{H}^{2}-C_{\delta,\varepsilon}\text{,}\nonumber
\end{align}
where $C_{\delta,\varepsilon}$ denotes a constant depending on both
$\varepsilon$ and $\delta$. Since $\frac{1}{2\varepsilon}>r_{2}$, we choose
$\delta$ such that $\left(  \frac{1}{2\varepsilon}-r_{2}-\delta\right)
=C>0$.\ From (\ref{ipo:g3bis}) and (\ref{K3}) follows%
\begin{align}
(g_{n}^{1},K\ast(1-2u_{n}))  &  \leq C\left\Vert K\ast(1-2u_{n})\right\Vert
_{H}\label{toc}\\
&  \leq C+D\left\Vert u_{n}\right\Vert _{H}\leq C+D\left\Vert u_{n}\right\Vert
_{H}^{2}\text{.}\nonumber
\end{align}
Using (\ref{Test1}), (\ref{tuc}), (\ref{ert}) and (\ref{toc}) we get%
\begin{equation}
\left\Vert u_{n}(t)\right\Vert _{H}^{2}+\int_{0}^{t}\int_{\Omega}%
\mu_{\varepsilon}(u_{n})\left\vert \nabla v_{n}\right\vert ^{2}\leq
C+D\int_{0}^{t}\left\Vert u_{n}\right\Vert _{H}^{2}\text{.} \label{test2}%
\end{equation}
We now use Gronwall's Lemma and get the estimates
\begin{equation}
\left\Vert u_{n}\right\Vert _{L^{\infty}(0,T,H)}\leq C \label{uru}%
\end{equation}
and%
\begin{equation}
\left\Vert \mu_{\varepsilon}^{1/2}(u_{n})\left\vert \nabla v_{n}\right\vert
\right\Vert _{L^{2}(0,T,H)}\leq C\text{.} \label{ss}%
\end{equation}
Furthermore, as consequence of (\ref{mu-eps-lim}) and (\ref{K3}), we obtain,
for every $\delta>0$ and some $C_{\delta}>0$,
\begin{align*}
\int_{\Omega}\mu_{\varepsilon}(u_{n})\left\vert \nabla v_{n}\right\vert ^{2}
&  \geq\varepsilon\int_{\Omega}\left\vert \nabla v_{n}\right\vert ^{2}\\
&  =\varepsilon\int_{\Omega}\left\vert \frac{\left(  1+2a_{\varepsilon
}\right)  \nabla u_{n}}{\mu_{\varepsilon}(u_{n})}+\nabla K\ast(1-2u_{n}%
)\right\vert ^{2}\\
&  \geq c\int_{\Omega}\left\vert \nabla u_{n}\right\vert ^{2}+C\int_{\Omega
}\left\vert \nabla K\ast(1-2u_{n})\right\vert ^{2}\\
&  \quad +C\int_{\Omega}\nabla u_{n}\nabla K\ast(1-2u_{n})\\
&  \geq\left(  c-\delta\right)  \int_{\Omega}\left\vert \nabla u_{n}%
\right\vert ^{2}-\left(  C+C_{\delta}\right)  r_{2}\left(  \left\Vert
u_{n}\right\Vert _{L^{2}(\Omega)}^{2}+1\right)
\end{align*}
where $r_{2}$ as in (\ref{K3}) and $c$ is a positive constant depending on
$\varepsilon$. If $\delta$ is small enough, then%
\[
\int_{\Omega}\mu_{\varepsilon}(u_{n})\left\vert \nabla v_{n}\right\vert
^{2}\geq c\int_{\Omega}\left\vert \nabla u_{n}\right\vert ^{2}-C\left\Vert
u_{n}\right\Vert _{L^{2}(\Omega)}^{2}-C\text{.}%
\]
Hence, from (\ref{test2}) we get%
\begin{align}
\left\Vert u_{n}\right\Vert _{L^{2}(0,T,V)}  &  \leq C\text{,}%
\label{uuuuuuuuu}\\
\left\Vert \nabla v_{n}\right\Vert _{L^{2}(0,T,H)}  &  \leq C\text{.}
\label{hihi}%
\end{align}
Furthermore, (\ref{K3}), (\ref{qwe}) and (\ref{uru}) yield
\begin{align*}
\left\vert \overline{v_{n}}\right\vert  &  =\left\vert \frac{1}{\left\vert
\Omega\right\vert }\int_{\Omega}v_{n}\right\vert =C\left\vert \int_{\Omega
}f_{\varepsilon}^{\prime}(u_{n})+\int_{\Omega}K\ast(1-2u_{n})\right\vert \\
&  \leq C\left\Vert u_{n}\right\Vert _{H}^{2}+C+\left\Vert K\ast
(1-2u_{n})\right\Vert _{H}^{2} \leq C\left\Vert u_{n}\right\Vert _{H}^{2}+C\leq C\text{.}%
\end{align*}
Using the Poincar\'{e}-Wirtinger inequality we get%
\begin{equation}
\left\Vert v_{n}\right\Vert _{L^{2}(0,T,V)}\leq C\text{.} \label{vn lim}%
\end{equation}
Moreover, thanks to (\ref{ipo:g3bis}), we obtain%
\begin{equation}
\left\Vert g_{n}^{1}\right\Vert _{L^{2}(Q)}\leq C\text{.} \label{g lim}%
\end{equation}
In order to estimate $u_{n}^{\prime}$, from (\ref{eq:CHRelDisc}), using
(\ref{mu-eps-lim}), we obtain
\begin{align*}
\left\langle u_{n}^{\prime},\psi\right\rangle    =-(\mu_{\varepsilon}%
(u_{n})\nabla v_{n},\nabla\psi)+(g_{n},\psi)&\leq C\left\Vert \nabla
v_{n}\right\Vert _{H}\left\Vert \nabla\psi\right\Vert _{H}+\left\Vert
g_{n}^{1}\right\Vert _{H}\left\Vert \psi\right\Vert _{H}\\
&\leq(C\left\Vert \nabla v_{n}\right\Vert _{H}+\left\Vert g_{n}%
^{1}\right\Vert _{H})\left\Vert \psi\right\Vert _{V}\text{.}%
\end{align*}
So, the estimates (\ref{hihi}) and (\ref{g lim}) yield%
\[
\left\Vert u_{n}^{\prime}\right\Vert _{L^{2}(0,T,V^{\ast})}\leq C\text{.}%
\]
Using compactness results, we obtain for a not relabeled subsequence%
\begin{equation}
u_{n}\rightharpoonup u~\text{\ weakly in }L^{2}(0,T,V)\text{,} \label{11}%
\end{equation}%
\begin{equation}
u_{n}\rightharpoonup u~\text{\ weakly* in }L^{\infty}(0,T,H)\text{,}
\label{12}%
\end{equation}%
\begin{equation}
u_{n}^{\prime}\rightharpoonup u^{\prime}~\text{\ weakly in }L^{2}(0,T,V^{\ast
})\text{,} \label{01}%
\end{equation}%
\begin{equation}
f_{\varepsilon}^{\prime}(u_{n})\rightharpoonup f_{\varepsilon}^{\ast
}~\text{\ weakly* in }L^{\infty}(0,T,H)\text{,} \label{de}%
\end{equation}%
\begin{equation}
v_{n}\rightharpoonup v\text{ weakly in }L^{2}(0,T,V)\text{.} \label{da}%
\end{equation}
Taking into account Theorem \ref{Robinson} in the Appendix, we have
\begin{equation}
u_{n}\rightarrow u~\text{\ strongly in }L^{2}(0,T,H)\text{ and }%
a.e.~\text{\ in }Q\text{.} \label{dede}%
\end{equation}
Functions $\mu_{\varepsilon}$ and $g^{1}$ are continuous, so, using
(\ref{ipo:g3bis}) and (\ref{mu-eps-lim}), we have (thanks to dominated
convergence Theorem)
\begin{equation}
\mu_{\varepsilon}(u_{n})\rightarrow\mu_{\varepsilon}(u)\text{ }a.e.\text{ in
}Q\text{\ ,} \label{limite mu-eps}%
\end{equation}%
\begin{equation}
g^{1}(u_{n})\rightarrow g^{1}(u)~\text{\ in }L^{2}(0,T,H).
\end{equation}
Hence%
\begin{equation}
\mu_{\varepsilon}(u_{n})\nabla v_{n}\rightharpoonup\mu_{\varepsilon}(u)\nabla
v~\text{\ weakly in }L^{2}(Q)\text{. } \label{s}%
\end{equation}
Indeed, let $\psi\in L^{2}(0,T,H)$, $i\in\{1,\dots,d\}$. From
\[
\int_{0}^{T}\left(  \mu_{\varepsilon}(u_{n})\partial_{i}v_{n},\psi\right)
=~\int_{0}^{T}\left(  \partial_{i}v_{n},\mu_{\varepsilon}(u_{n})\psi\right)
\]
we get
\begin{align*}
\int_{0}^{T}\left(  \partial_{i}v_{n},\mu_{\varepsilon}(u_{n})\psi\right)   
=\int_{0}^{T}\left(  \partial_{i}v,\mu_{\varepsilon}(u)\psi\right) +\int_{0}^{T}\left(  \partial_{i}v_{n}-\partial_{i}v,\mu_{\varepsilon
}(u)\psi\right)  +\int_{0}^{T}\left(  \partial_{i}v_{n},\left(  \mu
_{\varepsilon}(u_{n})-\mu_{\varepsilon}(u)\right)  \psi\right)  .
\end{align*}
Thanks to (\ref{mu-eps-lim}), (\ref{vn lim}) and (\ref{limite mu-eps}), using
dominated convergence Theorem we obtain
\begin{align*}
 \left\vert \int_{0}^{T}\left(  \partial_{i}v_{n},\left(  \mu_{\varepsilon
}(u_{n})-\mu_{\varepsilon}(u)\right)  \psi\right)  \right\vert & \leq\left\Vert \partial_{i}v_{n}\right\Vert _{L^{2}(0,T,H)}\left\Vert
\left(  \mu_{\varepsilon}(u_{n})-\mu_{\varepsilon}(u)\right)  \psi\right\Vert
_{L^{2}(0,T,H)}\\
&  \leq C\left\Vert \left(  \mu_{\varepsilon}(u_{n})-\mu_{\varepsilon
}(u)\right)  \psi\right\Vert _{L^{2}(0,T,H)}\rightarrow0
\end{align*}
for $n\rightarrow+\infty$. Furthermore, as consequence of (\ref{mu-eps-lim}),
$\mu_{\varepsilon}(u)\psi\in L^{2}(0,T,H)$ and so, thanks to (\ref{da}), we
have $\int_{0}^{T}\left(  \partial_{i}v-\partial_{i}v_{n},\mu_{\varepsilon
}(u)\psi\right)  \rightarrow0$ for $n\rightarrow+\infty$. This yields (\ref{s}).

Finally, using (\ref{de}), (\ref{dede}) and continuity of $f_{\varepsilon
}^{\prime}$, we have $f_{\varepsilon}^{\ast}=f_{\varepsilon}^{\prime}(u).$ The
convergences (\ref{11})-(\ref{da}), (\ref{dede}), (\ref{limite mu-eps}%
)-(\ref{s}) are enough to pass to the limit ($n\rightarrow+\infty$) in
(\ref{eq:CHRelDisc})-(\ref{yu}) and to deduce that $u$ is solution to
(\ref{eq:CHrel}).

Furthermore, thanks to Fatou Lemma and (\ref{ss}), we get%
\[
\left\Vert \mu_{\varepsilon}^{1/2}(u)\left\vert \nabla v\right\vert
\right\Vert _{L^{2}(0,T,L^{2}(\Omega))}\leq\lim\inf_{n\rightarrow\infty
}\left\Vert \mu_{\varepsilon}^{1/2}(u_{n})\left\vert \nabla v_{n}\right\vert
\right\Vert _{L^{2}(0,T,L^{2}(\Omega))}\leq C\text{.}%
\]
Lemma \ref{probdisc} is now proved.
\end{proof}

\subsection{Passing to the limit as $\varepsilon\rightarrow0$}
\label{pass lim}

In order to show Theorem \ref{Teorema principale} it is necessary to pass to
the limit $\varepsilon\rightarrow0$ in (\ref{eq:CHrel})-(\ref{eq:icrel}). Hence, we need to perform here uniform - with respect to $\varepsilon$ - 
estimates on the solution $\left(  u_{\varepsilon},v_{\varepsilon},w_{\varepsilon}\right)  $ to (\ref{eq:CHrel})-(\ref{eq:icrel}). Henceforth
we shall denote by $C$ a positive constant which doesn't depend on
$\varepsilon$ and $t$. The values of $C$ may possibly vary even within the
same line. 

Let us choose now $\psi
=u_{\varepsilon}$ as test function in (\ref{eq:CHrel}). We get (using
(\ref{mu-eps-lim}) and assumptions (\ref{K3}) and (\ref{ipo:g3bis}))
\begin{align*}
\frac{1}{2}\frac{d}{dt}\left\Vert u_{\varepsilon}\right\Vert _{L^{2}(\Omega
)}^{2}  &  =\left\langle u_{\varepsilon}^{\prime},u_{\varepsilon}\right\rangle
=-\int_{\Omega}\mu_{\varepsilon}\nabla u_{\varepsilon}\nabla v_{\varepsilon
}+\int_{\Omega}u_{\varepsilon}g^{1}(u_{\varepsilon})\\
&  \leq-\int_{\Omega}\left\vert \nabla u_{\varepsilon}\right\vert ^{2}%
-\int_{\Omega}\mu_{\varepsilon}\nabla u_{\varepsilon}\nabla w_{\varepsilon
}+C\left\Vert u_{\varepsilon}\right\Vert _{L^{2}(\Omega)}^{2}+C\\
&  \leq-\int_{\Omega}\left\vert \nabla u_{\varepsilon}\right\vert
^{2}+C\left\Vert \nabla u_{\varepsilon}\right\Vert _{L^{2}(\Omega)}\left\Vert
K\ast(1-2u_{\varepsilon})\right\Vert _{H^1(\Omega)}\\
&\quad+C\left\Vert u_{\varepsilon}\right\Vert _{L^{2}(\Omega)}^{2}+C\\
&  \leq(\delta-1)\int_{\Omega}\left\vert \nabla u_{\varepsilon}\right\vert
^{2}+C_{\delta}\left\Vert u_{\varepsilon}\right\Vert _{L^{2}(\Omega)}%
^{2}+C_{\delta}%
\end{align*}
for every $\delta>0$ and some $C_{\delta}$ depending on $\delta$. Moving $(\delta-1)\int_{\Omega}\left\vert \nabla u_{\varepsilon
}\right\vert ^{2}$ on the left side of the inequality, choosing $\delta<1$ and
using Gronwall's Lemma we get
\begin{equation}
\left\Vert u_{\varepsilon}\right\Vert _{L^{\infty}(0,T,L^{2}(\Omega
))}+\left\Vert \nabla u_{\varepsilon}\right\Vert _{L^{2}(0,T,L^{2}(\Omega
))}\leq C \label{mnb}%
\end{equation}
and therefore%
\begin{equation}
\left\Vert u_{\varepsilon}\right\Vert _{L^{2}(0,T,H^{1}(\Omega))}\leq
C\text{.} \label{iop}%
\end{equation}

Using $\psi=v_{\varepsilon}$ as test function  in \eqref{eq:CHrel}, we have%
\begin{align}
&  \frac{d}{dt}\left\{  \int_{\Omega}f_{\varepsilon}(u_{\varepsilon}%
)+\int_{\Omega}\left[  K\ast(1-u_{\varepsilon})\right]  u_{\varepsilon
}\right\}  +\int_{\Omega}\mu_{\varepsilon}\left\vert \nabla v_{\varepsilon
}\right\vert ^{2}\label{ret}\\
&  =\left\langle u_{\varepsilon}^{\prime},v_{\varepsilon}\right\rangle
+\int_{\Omega}\mu_{\varepsilon}\left\vert \nabla v_{\varepsilon}\right\vert
^{2}=\int_{\Omega}g^{1}(u_{\varepsilon})f_{\varepsilon}^{\prime}%
(u_{\varepsilon})+\int_{\Omega}g^{1}(u_{\varepsilon})w_{\varepsilon}%
\text{.}\nonumber
\end{align}
Thanks to (\ref{ipo:g3bis}), (\ref{K3}) and (\ref{mnb}), we infer
\[
\int_{\Omega}g^{1}(u_{\varepsilon})w_{\varepsilon}\leq C\left\Vert
w_{\varepsilon}\right\Vert _{L^{2}(\Omega)}\leq C\left\Vert u_{\varepsilon
}\right\Vert _{L^{2}(\Omega)}\leq C
\]
and%
\[
\left\vert \int_{\Omega}\left[  K\ast(1-u_{\varepsilon})\right]
u_{\varepsilon}\right\vert \leq\left\Vert u_{\varepsilon}\right\Vert
_{L^{2}(\Omega)}\left\Vert K\ast(1-2u_{\varepsilon})\right\Vert _{L^{2}%
(\Omega)}\leq C\text{.}%
\]
Moreover, (\ref{G5bis}) and (\ref{propf}) yield the following estimate
\begin{align}
\int_{\Omega}g^{1}(u_{\varepsilon})f_{\varepsilon}^{\prime}(u_{\varepsilon})
 =&\int_{u_{\varepsilon}\leq0}g^{1}(u_{\varepsilon})f_{\varepsilon}^{\prime
}(u_{\varepsilon})\label{stimaimp}\\
& \, +\int_{u_{\varepsilon}\geq0}g^{1}(u_{\varepsilon})f_{\varepsilon}^{\prime
}(u_{\varepsilon})+\int_{u_{\varepsilon}\in(0,1)}g^{1}(u_{\varepsilon
})f_{\varepsilon}^{\prime}(u_{\varepsilon})\nonumber\\
 \leq&\int_{u_{\varepsilon}\in(0,1)}g^{1}(u_{\varepsilon})\ln(u_{\varepsilon
}+a_{\varepsilon})\nonumber\\
& \, -\int_{u_{\varepsilon}\in(0,1)}g^{1}(u_{\varepsilon})\ln(1-u_{\varepsilon
}+a_{\varepsilon})\text{.}\nonumber
\end{align}
Since $a_{\varepsilon}\searrow0$ as $\varepsilon\rightarrow0$, we may assume -
without loss of generality - that $0<a_{\varepsilon}<1/2$ for $\varepsilon$
small enough. So $\ln(s+a_{\varepsilon})\leq0$ for $s\in(0,1/2)$ and
$\ln(1-s+a_{\varepsilon})\leq0$ for $s\in(1/2,1).$ Hence, thanks to
(\ref{ipo:g5}), we have $-g^{1}(0)\ln(s+a_{\varepsilon})\geq0$ for
$s\in(0,1/2).$ Furthermore, (\ref{ipo:g3bis}) yields $\left\vert g^{1}%
(s)\ln(s+a_{\varepsilon})\right\vert \leq C$ for $s\in(1/2,1)$. Finally,
thanks to (\ref{ipo:gLip}), we obtain%
\begin{align}
\int_{u_{\varepsilon}\in(0,1)}g^{1}(u_{\varepsilon})\ln(u_{\varepsilon
}+a_{\varepsilon})  \leq& \int_{u_{\varepsilon}\in(0,1/2)}g^{1}%
(u_{\varepsilon})\ln(u_{\varepsilon}+a_{\varepsilon})\label{stimaimp2}\\
&  +\int_{u_{\varepsilon}\in(1/2,1)}\left\vert g^{1}(u_{\varepsilon}%
)\ln(u_{\varepsilon}+a_{\varepsilon})\right\vert \nonumber\\
 \leq &\int_{u_{\varepsilon}\in(0,1/2)}\left(  g^{1}(u_{\varepsilon}%
)-g^{1}(0)\right)  \ln(u_{\varepsilon}+a_{\varepsilon})+C\nonumber\\
\leq &\,-\int_{u_{\varepsilon}\in(0,1/2)}Lu_{\varepsilon}\ln(u_{\varepsilon
}+a_{\varepsilon})+C\nonumber\\
\leq&\,-\int_{u_{\varepsilon}\in(0,1/2)}L\left(  u_{\varepsilon}%
+a_{\varepsilon}\right)  \ln(u_{\varepsilon}+a_{\varepsilon})+C\leq C\nonumber
\end{align}
where $L$ is the lipschitz constant for $g$. The proof of
\begin{equation}
-\int_{u_{\varepsilon}\in(0,1)}g^{1}(u_{\varepsilon})\ln(1-u_{\varepsilon
}+a_{\varepsilon})\leq C \label{stimaimp3}%
\end{equation}
is analogous. Integrating (\ref{ret}) in time, we obtain%
\begin{align}\nonumber
\left\Vert \left(  \mu_{\varepsilon}\right)  ^{1/2}\nabla v_{\varepsilon
}\right\Vert _{L^{2}(Q)}  &  \leq C\text{,}\\
\left\vert \int_{\Omega}f_{\varepsilon}(u_{\varepsilon})\right\vert  &  \leq
C\text{.} \label{FTR}%
\end{align}
Therefore (see (\ref{eq:CHrel}))%
\[
\left\Vert u_{\varepsilon}^{\prime}\right\Vert _{L^{2}(0,T,\left(
H^{1}(\Omega)\right)  ^{\ast})}\leq C\text{.}%
\]

Using compactness results as in Lemma \ref{probdisc} we obtain (for a not
relabeled subsequence) that there exists $
u\in H^{1}(0,T,H^{1}(\Omega),(H^{1}(\Omega))^{\ast})\cap L^{\infty}%
(0,T,L^{2}(\Omega))$
such that
\[
u_{\varepsilon}\rightharpoonup u~\text{\ weakly in }L^{2}(0,T,H^{1}%
(\Omega))\text{,}%
\]%
\[
u_{\varepsilon}\rightharpoonup u~\text{\ weakly* in }L^{\infty}(0,T,L^{2}%
(\Omega))\text{,}%
\]%
\begin{equation}
u_{\varepsilon}\rightarrow u~\text{\ strongly in }L^{2}(0,T,L^{2}%
(\Omega))\text{ and a.e.~in }Q\text{,} \label{q}%
\end{equation}%
\[
u_{\varepsilon}^{\prime}\rightharpoonup u^{\prime}~\text{\ weakly in }%
L^{2}(0,T,\left(  H^{1}(\Omega)\right)  ^{\ast})\text{,}%
\]%
\[
g^{1}(u_{\varepsilon})\rightarrow g^{1}(u)\text{ pointwise }a.e.\text{ in
}Q\text{.}%
\]
Furthermore, (\ref{K3}) yields%
\begin{equation}
w_{\varepsilon}\rightarrow w=K\ast(1-2u)\text{ in }L^{2}(0,T,H^{1}%
(\Omega))\text{.} \label{ps conv}%
\end{equation}
Thanks to (\ref{yui}) we get%
\begin{equation}
\mu_{\varepsilon}(u_{\varepsilon})\rightarrow\mu(u)\text{ }a.e.\text{ in
}Q\text{,} \label{pippo}%
\end{equation}
therefore
\[
\mu_{\varepsilon}(u_{\varepsilon})\nabla w_{\varepsilon}\rightarrow
\mu(u)\nabla w\text{ in }L^{2}(Q)\text{.}%
\]
Indeed
\begin{align*}
\left\Vert \mu_{\varepsilon}(u_{\varepsilon})\nabla w_{\varepsilon}%
-\mu(u)\nabla w\right\Vert _{L^{2}(\Omega)}  &  \leq\left\Vert \left(
\mu_{\varepsilon}(u_{\varepsilon})-\mu(u)\right)  \nabla w_{\varepsilon
}\right\Vert _{L^{2}(\Omega)}\\
+\left\Vert \mu(u)\left(  \nabla w_{\varepsilon}-\nabla w\right)  \right\Vert
_{L^{2}(\Omega)}  &  \leq\left\Vert \mu_{\varepsilon}(u_{\varepsilon}%
)-\mu(u)\right\Vert _{L^{2}(\Omega)}\left\Vert \nabla w_{\varepsilon
}\right\Vert _{L^{2}(\Omega)}\\
&  +\left\Vert \mu(u)\right\Vert _{L^{2}(\Omega)}\left\Vert \nabla
w_{\varepsilon}-\nabla w\right\Vert _{L^{2}(\Omega)}.
\end{align*}
Using (\ref{mu-eps-lim}), (\ref{ps conv}), (\ref{pippo}) and dominated
convergence Theorem we have
\[
\left\Vert \mu_{\varepsilon}(u_{\varepsilon})-\mu(u)\right\Vert _{L^{2}%
(\Omega)}\left\Vert \nabla w_{\varepsilon}\right\Vert _{L^{2}(\Omega
)}\rightarrow0\text{ and }
\left\Vert \mu(u)\right\Vert _{L^{2}(\Omega)}\left\Vert \nabla w_{\varepsilon
}-\nabla w\right\Vert _{L^{2}(\Omega)}\rightarrow0\hbox{ for }\varepsilon\rightarrow0.
\]
Now, we can pass to the limit $\varepsilon\rightarrow0$ in (\ref{eq:CHrel}%
)-(\ref{eq:icrel}) and obtain $u$ solution to (\ref{eq:CH})-(\ref{eq:ic}) with
$g^{1}$ instead of $g$.
In order to prove Theorem \ref{Teorema principale}, we are only left to show
that%
\[
0\leq u\leq1
\]
holds. From (\ref{def:mu-epsilon}) we have that $\mu_{\varepsilon
}(s)=\varepsilon$ for every $s<0$ and $s>1$. Hence, as consequence of
(\ref{we giovane}), $f_{\varepsilon}^{\prime\prime}(s)=\frac{1+2a_{\varepsilon
}}{\varepsilon}$ for every $s<0$ and $s>1.$ Therefore
\[
f_{\varepsilon}^{\prime}(s)\geq\frac{1+2a_{\varepsilon}}{\varepsilon
}(s-1)+f_{\varepsilon}^{\prime}(1)\geq\frac{1+2a_{\varepsilon}}{\varepsilon
}(s-1).
\]
Finally
\[
f_{\varepsilon}(s)\geq\frac{1+2a_{\varepsilon}}{2\varepsilon}(s-1)^{2}%
+f_{\varepsilon}(1)\text{.}%
\]
Likewise, we can prove
\[
f_{\varepsilon}(s)\geq\frac{1+2a_{\varepsilon}}{2\varepsilon}s^{2}%
+f_{\varepsilon}(0)\text{.}%
\]
So, thanks to (\ref{FTR}),
\begin{align*}
\int_{u_{\varepsilon}>1}(u_{\varepsilon}-1)^{2}  &  \leq\frac{2\mu
_{\varepsilon}(1)}{1+2a_{\varepsilon}}\int_{u_{\varepsilon}>1}\left\vert
f_{\varepsilon}(u_{\varepsilon})\right\vert -\frac{2\mu_{\varepsilon}%
(1)}{1+2a_{\varepsilon}}\int_{u_{\varepsilon}>1}\left\vert f_{\varepsilon
}(1)\right\vert \\
&  \leq\frac{\mu_{\varepsilon}(1)}{1+2a_{\varepsilon}}\left(  C-2\int
_{u_{\varepsilon}>1}\left\vert f_{\varepsilon}(1)\right\vert \right)  .
\end{align*}
Using (\ref{def:f}) and taking into account that $\frac{\mu_{\varepsilon}%
(1)}{1+2a_{\varepsilon}}=o(1)$, $f_{\varepsilon}(1)=o(1)$ for $\varepsilon
\rightarrow0$ we get
\[
\int_{u>1}\left(  u-1\right)  ^{2}=0\text{.}%
\]
Hence $u\leq1$ $a.e.$ in $Q$. The proof of $u\geq0$ $a.e.$ in $Q$ is analogous.

This yields $g^{1}(u)=g(u)$, so $u$ is solution to (\ref{eq:CH})-(\ref{eq:ic})
for every $g$ that satisfies (\ref{G2})-(\ref{ipo:g5}).

\section{Regularity\label{RRRR}}

Section \ref{RRRR} is devoted to the proofs of Theorem \ref{teorego}, Corollary
\ref{Cororego} and Theorem \ref{teo v reg}. Our proofs of Theorem \ref{teorego}
and Corollary \ref{Cororego} follows the guide-line of proof of Theorem 2.2 in
\cite{LP2}, where the same results are proved in the case $g=0$.

\bigskip

\subsection{Proof of Theorem \ref{teorego}}

The following Lemmas \ref{lemma1}-\ref{lemma4} are preliminary results needed
in order to prove Theorem \ref{teorego}.

\begin{lemma}
\label{lemma1}Let the assumption of Theorem \ref{teorego} be satisfied. Then
the solution $u$ to (\ref{eq:CH})-(\ref{eq:ic}) in the sense of Definition
\ref{def1} satisfies%
\begin{equation}
u_{t}\in L^{\infty}(0,T,\left(  H^{1}(\Omega)\right)  ^{\ast})\cap
L^{2}(0,T,L^{2}(\Omega))\text{.} \label{lem1}%
\end{equation}

\end{lemma}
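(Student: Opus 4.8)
The plan is to obtain the regularity \eqref{lem1} by differentiating the equation (at the level of the approximate problem $P_\varepsilon$) in time and deriving bounds on $\partial_t u_\varepsilon$ that are uniform in $\varepsilon$, then passing to the limit. Since the weak solution $u$ was constructed as a limit of solutions $u_\varepsilon$ to $P_\varepsilon$, all manipulations should be carried out on $u_\varepsilon$ — which, at the Galerkin level, enjoys enough smoothness in time (the $\alpha_k$ are $C^1$) to justify differentiation — and the resulting estimates will then be inherited by $u$ through weak/weak-$*$ lower semicontinuity of the norms. Formally, differentiating \eqref{eq:CHrel} in $t$ and testing with $\partial_t u_\varepsilon$ gives
\[
\frac{1}{2}\frac{d}{dt}\|\partial_t u_\varepsilon\|_{(H^1)^*,\sharp}^2
+ \int_\Omega \mu_\varepsilon(u_\varepsilon)\,f_\varepsilon''(u_\varepsilon)\,|\nabla \partial_t u_\varepsilon|^2
= -\int_\Omega \mu_\varepsilon'(u_\varepsilon)\,\partial_t u_\varepsilon\,\nabla v_\varepsilon\cdot\nabla \partial_t u_\varepsilon
- \int_\Omega \mu_\varepsilon(u_\varepsilon)\,\partial_t(\nabla w_\varepsilon)\cdot\nabla\partial_t u_\varepsilon
+ \int_\Omega \partial_t\big(g^1(u_\varepsilon)\big)\,\partial_t u_\varepsilon,
\]
where one uses $\partial_t v_\varepsilon = f_\varepsilon''(u_\varepsilon)\partial_t u_\varepsilon + \partial_t w_\varepsilon$. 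The crucial cancellation is that the coefficient $\mu_\varepsilon f_\varepsilon''$ equals $1+2a_\varepsilon\ge 1$ by \eqref{def:f-epsiolon}, so the second term on the left controls $\|\nabla\partial_t u_\varepsilon\|_{L^2}^2$ — this is the structural reason the argument closes without needing control of the degenerate mobility.

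The key steps, in order, are: (i) justify the time-differentiated identity on the Galerkin system and test with $u_n'$ (equivalently $\partial_t u_\varepsilon$), obtaining the display above with the good term $(1+2a_\varepsilon)\|\nabla\partial_t u_\varepsilon\|_{L^2}^2$ on the left; (ii) estimate the right-hand side terms: the $w_\varepsilon$-term is handled by \eqref{K3} applied to $\partial_t(K\ast(1-2u_\varepsilon)) = K\ast(-2\partial_t u_\varepsilon)$, bounding $\|\partial_t\nabla w_\varepsilon\|_{L^2}\le C\|\partial_t u_\varepsilon\|_{L^2}$; the $g$-term is handled by \eqref{ipo:gUni} together with the new hypothesis \eqref{ipo:gReg3} (the Lipschitz-in-$t$ bound on $g$), which gives $|\partial_t(g^1(u_\varepsilon))|\le L(|\partial_t u_\varepsilon|+1)$; the $\mu_\varepsilon'$-term uses the boundedness $|\mu_\varepsilon'|\le C$ together with $\nabla v_\varepsilon\in L^2$ (from \eqref{hihi}/\eqref{FTR}), absorbed via Young's inequality into the good gradient term at the cost of a factor $\|\partial_t u_\varepsilon\|_{L^\infty(L^2)}$-type weight — here one must be a little careful and may instead absorb using an $L^2(0,T;L^2)$ control, or use that after bounding $\|\nabla\partial_t u_\varepsilon\|_{L^2(Q)}$ one recovers $\|\partial_t u_\varepsilon\|_{L^2(H^1)}$; (iii) check the initial datum: $\|\partial_t u_\varepsilon(0)\|_{(H^1)^*}$ is bounded using \eqref{eq:CHrel} at $t=0$ together with $u_0\in H^2(\Omega)$ from \eqref{u0h2}, the compatibility condition \eqref{u0h3}, \eqref{K4}, and \eqref{mu-eps-lim}; (iv) apply Gronwall to get $\|\partial_t u_\varepsilon\|_{L^\infty(0,T;(H^1)^*_\sharp)} + \|\nabla\partial_t u_\varepsilon\|_{L^2(Q)}\le C$ uniformly in $\varepsilon$, hence (controlling the mean of $\partial_t u_\varepsilon$ by testing \eqref{eq:CHrel} with $\psi\equiv 1$, which gives $\int_\Omega \partial_t u_\varepsilon = \int_\Omega g^1(u_\varepsilon)$, bounded by \eqref{ipo:g3bis}) a full bound $\|\partial_t u_\varepsilon\|_{L^\infty(0,T;(H^1)^*)}+\|\partial_t u_\varepsilon\|_{L^2(0,T;H^1)}\le C$; (v) pass to the limit $\varepsilon\to 0$ using weak-$*$/weak compactness and the uniqueness of the limit identified in Theorem \ref{Teorema principale}, concluding \eqref{lem1} (in fact slightly more, namely $\partial_t u\in L^2(0,T;H^1(\Omega))$, which is what is genuinely used afterwards).

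The main obstacle I expect is step (iii), the bound on $\partial_t u_\varepsilon(0)$ in $(H^1(\Omega))^*$: one needs that $\nabla\cdot(\mu_\varepsilon(u_0)\nabla v_\varepsilon(0))$ makes sense as an element of $(H^1)^*$ with norm bounded independently of $\varepsilon$, and this is exactly where \eqref{u0h2}, \eqref{u0h3}, and \eqref{K4} enter — \eqref{u0h3} is the natural compatibility/boundary condition making the approximate flux $\nabla u_0 + \mu_\varepsilon(u_0)\nabla K\ast(1-2u_0)$ compatible with the no-flux boundary condition in the limit, so that after integration by parts no boundary term blows up. A secondary delicate point is the absorption of the $\mu_\varepsilon'(u_\varepsilon)\partial_t u_\varepsilon\nabla v_\varepsilon\cdot\nabla\partial_t u_\varepsilon$ term, which involves a product of two factors depending on $\partial_t u_\varepsilon$; one typically controls $\nabla v_\varepsilon$ in $L^2(0,T;L^2)$ only, so this term should be treated as $\le \tfrac12\|\nabla\partial_t u_\varepsilon\|_{L^2}^2 + C\|\nabla v_\varepsilon\|_{L^2}^2\|\partial_t u_\varepsilon\|_{L^\infty(\Omega)}^2$ — problematic — and instead one should reorganize by integrating the $w$-term and $g$-term contributions first to get an $L^2(H^1)$ bound on $\partial_t u_\varepsilon$ via a Gronwall inequality in which the bad term is handled with a time-integrable coefficient, or alternatively one avoids differentiating in time altogether and instead tests the time-differentiated equation with $\partial_t u_\varepsilon$ only after noting $\mu_\varepsilon'$ is bounded and $\nabla v_\varepsilon \in L^2(Q)$ suffices once the good term $(1+2a_\varepsilon)\|\nabla \partial_t u_\varepsilon\|^2$ dominates. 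I would present the cleanest version: test with $\partial_t u_\varepsilon$, use $\mu_\varepsilon f_\varepsilon'' \equiv 1+2a_\varepsilon$, Young on every right-hand term, and Gronwall, flagging that the coefficient multiplying $\|\partial_t u_\varepsilon\|_{L^2}^2$ in the Gronwall inequality is $L^1$ in time thanks to $\nabla v_\varepsilon\in L^2(0,T;L^2(\Omega))$.
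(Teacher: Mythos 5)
Your overall strategy (differentiate the equation in time, test against the time derivative, exploit the structural identity $\mu_\varepsilon f_\varepsilon''\equiv 1+2a_\varepsilon$ from (\ref{def:f-epsiolon}), control the convolution terms via (\ref{K3}), the reaction terms via (\ref{ipo:gLip}) and (\ref{ipo:gReg3}), and close with Gronwall from a bounded initial value supplied by (\ref{u0h2})--(\ref{u0h3})) is the right one and matches the paper's in spirit. However, the differentiated identity you display is not correct, and the ``secondary delicate point'' you agonise over is a symptom of that. Expanding $\partial_t\bigl(\mu_\varepsilon(u_\varepsilon)\nabla v_\varepsilon\bigr)$ as $\mu_\varepsilon'\partial_t u_\varepsilon\,\nabla v_\varepsilon+\mu_\varepsilon\,\nabla\partial_t v_\varepsilon$ forces you to carry the term $\mu_\varepsilon f_\varepsilon'''(u_\varepsilon)\,\partial_t u_\varepsilon\,\nabla u_\varepsilon\cdot\nabla\partial_t u_\varepsilon$ coming from $\nabla\bigl(f_\varepsilon''(u_\varepsilon)\partial_t u_\varepsilon\bigr)$, which is absent from your display. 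Once it is restored, the combination $\bigl(\mu_\varepsilon' f_\varepsilon''+\mu_\varepsilon f_\varepsilon'''\bigr)\partial_t u_\varepsilon\,\nabla u_\varepsilon=(\mu_\varepsilon f_\varepsilon'')'\,\partial_t u_\varepsilon\,\nabla u_\varepsilon$ vanishes identically, so the only terms left are the good one, $(1+2a_\varepsilon)\lvert\nabla\partial_t u_\varepsilon\rvert^2$, and $(\mu_\varepsilon\nabla w_\varepsilon)_t\cdot\nabla\partial_t u_\varepsilon$; the term involving $\nabla v_\varepsilon$ that you propose to handle ``with a time-integrable Gronwall coefficient'' is illusory --- and it is just as well, because that workaround would not close: $\int_\Omega\mu_\varepsilon'\partial_t u_\varepsilon\,\nabla v_\varepsilon\cdot\nabla\partial_t u_\varepsilon$ is not of the form $(\text{integrable function of }t)\times\lVert\partial_t u_\varepsilon\rVert_{L^2(\Omega)}^2$ plus an absorbable piece, since it couples three factors pointwise. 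This is exactly why the paper never introduces $\nabla v$ at this stage: it differentiates (\ref{CHWEAK}), whose flux is already written as $\nabla u+\mu\nabla w$, and the whole right-hand side reduces to $(\mu\nabla w)_t$ and the $g$-terms, each bounded in $L^2(\Omega)$ by $C(1+\lVert u_t\rVert_{L^2(\Omega)})$. Note also that the bound $\lVert\mu_t\nabla w\rVert_{L^2(\Omega)}\le\lVert u_t\rVert_{L^2(\Omega)}\lVert\nabla w\rVert_{L^\infty(\Omega)}$ uses $\lVert\nabla w\rVert_{L^\infty(\Omega)}\le C$, which follows from (\ref{K3}) and $0\le u\le1$; since the approximate solutions $u_\varepsilon$ do not satisfy (\ref{xxx}), your plan of running the whole argument at the $\varepsilon$-level would additionally require the uniform $L^\infty$ bound (\ref{linftybound}) (Alikakos iteration) before this step, which you do not mention. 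The paper avoids both issues by arguing formally on the limit solution $u$ and invoking difference quotients for rigour.

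On the structural side, be aware that what you set out to prove is the stronger estimate $\partial_t u\in L^\infty(0,T;L^2(\Omega))\cap L^2(0,T;H^1(\Omega))$, i.e.\ the content of Lemma \ref{lemma2}, not of Lemma \ref{lemma1}. The paper deliberately proves the weaker statement (\ref{lem1}) first by splitting $u_t=U_t+\bar u_t$ (with $\bar u_t=\int_\Omega g(u)$ bounded by (\ref{ipo:Gi3})) and testing the differentiated equation with $\Delta_N^{-1}U_t$, which turns $(\nabla U_t,\nabla\Delta_N^{-1}U_t)$ into $-\lVert U_t\rVert_{L^2(\Omega)}^2$ and requires only $u_t(0)\in(H^1(\Omega))^*$; the resulting $u_t\in L^2(0,T;L^2(\Omega))$ is then precisely what legitimises the Gronwall step in Lemma \ref{lemma2}. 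Your one-shot version, carried out at an approximation level where all quantities are finite, would subsume both lemmas and is acceptable in principle --- but only after the two gaps above (the erroneous identity and the missing $L^\infty$ control on $u_\varepsilon$) are repaired.
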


\begin{proof}
First we observe that, thanks to (\ref{u0h2}), $u_{t}(0)\in\left(
H^{1}(\Omega)\right)  ^{\ast}$. From (\ref{ops}), we have 
$$\bar{u}_{t}%
=\int_{\Omega}g(x,t,u(x,t))\,dx\quad \hbox{for a.a.~}t\in\lbrack0,T].$$ 
Hence, as
consequence of (\ref{ipo:Gi3}), since $\Omega$ is bounded, we have%
\begin{equation}
\bar{u}_{t}\in L^{\infty}(0,T)\text{ and }\left\Vert \bar{u}_{t}\right\Vert
_{L^{\infty}(0,T)}\leq C\text{.} \label{medLim}%
\end{equation}
Denote $H_{0}^{-1}(\Omega)=\left(  H_{0}^{1}(\Omega)\right)  ^{\ast}$. In
order to show (\ref{lem1}) we only have to prove
\[
U_{t}=u_{t}-\bar{u}_{t}\in L^{\infty}(0,T,H_{0}^{-1}(\Omega))\cap
L^{2}(0,T,L^{2}(\Omega)).
\]
It is not hard to show that $U_{t}\in H_{0}^{-1}(\Omega)$ for $a.a.~t\in
\lbrack0,T]$. Let $\Delta_{N}:H_{0}^{1}(\Omega)\rightarrow H_{0}^{-1}(\Omega)$
be the realization of the Laplacian with the Neumann boundary conditions. Henceforth we will
proceed formally: the proof can be made exact by approximation of the
$t$-derivative by the corresponding quotient. Differentiating equation
(\ref{CHWEAK}) with respect to $t$ and taking the scalar product with
$\Delta_{N}^{-1}U_{t}$ we can prove the following%
\begin{align}
\frac{d}{dt}\left\Vert U_{t}\right\Vert _{H_{0}^{-1}(\Omega)}^{2}  &
=2(U_{tt},U_{t})_{H_{0}^{-1}(\Omega)}\label{tizio}\\
&  =2(\nabla\Delta_{N}^{-1}U_{tt},\nabla\Delta_{N}^{-1}U_{t})_{L^{2}(\Omega
)}=-2\left(  U_{tt},\Delta_{N}^{-1}U_{t}\right) \nonumber
\end{align}
and, using (\ref{eq:bc}) and (\ref{u0h3}),
\begin{align}
\left\langle \nabla(\mu\nabla v)_{t},\Delta_{N}^{-1}U_{t}\right\rangle  &
=-\left(  \left(  \mu\nabla w\right)  _{t},\nabla\Delta_{N}^{-1}U_{t}\right)
_{L^{2}(\Omega)}-\left(  \nabla u_{t},\nabla\Delta_{N}^{-1}U_{t}\right)
_{L^{2}(\Omega)}\label{cai}\\
&  =-\left(  \left(  \mu\nabla w\right)  _{t},\nabla\Delta_{N}^{-1}%
U_{t}\right)  _{L^{2}(\Omega)}-\left(  \nabla U_{t},\nabla\Delta_{N}^{-1}%
U_{t}\right)  _{L^{2}(\Omega)}\nonumber\\
&  =-\left(  \left(  \mu\nabla w\right)  _{t},\nabla\Delta_{N}^{-1}%
U_{t}\right)  _{L^{2}(\Omega)}+\left\Vert U_{t}\right\Vert _{L^{2}(\Omega
)}^{2}\text{.}\nonumber
\end{align}
Hence, adding together (\ref{tizio}) and (\ref{cai}), we obtain
\begin{align}
\frac{1}{2}\frac{d}{dt}\left\Vert U_{t}\right\Vert _{H_{0}^{-1}(\Omega)}%
^{2}+\left\Vert U_{t}\right\Vert _{L^{2}(\Omega)}^{2}  &  =-\left(
U_{tt},\Delta_{N}^{-1}U_{t}\right)  _{L^{2}(\Omega)}\label{ippo}\\
+\left(  \left(  \mu\nabla w\right)  _{t},\nabla\Delta_{N}^{-1}U_{t}\right)
_{L^{2}(\Omega)}  &  +\left\langle \nabla(\nabla u+\mu\nabla w)_{t},\Delta
_{N}^{-1}U_{t}\right\rangle \text{.}\nonumber
\end{align}
Starting from (\ref{eq:CH}) and differentiating with respect to $t$ we obtain
$U_{tt}=u_{tt}-\bar{u}_{tt}=u_{tt}-\int_{\Omega}\partial_{t}\left(  g\left(
u\right)  \right)  =\nabla u_{t}+\left(  \mu\nabla w\right)  _{t}+\partial
_{t}(g\left(  u\right)  )-\int_{\Omega}\partial_{t}\left(  g\left(  u\right)
\right)  $. So, thanks to (\ref{ippo}), we have
\begin{align}
\frac{1}{2}\frac{d}{dt}\left\Vert U_{t}\right\Vert _{H_{0}^{-1}(\Omega)}%
^{2}+\left\Vert U_{t}\right\Vert _{L^{2}(\Omega)}^{2}  &  =\left(  \left(
\mu\nabla w\right)  _{t},\nabla\Delta_{N}^{-1}U_{t}\right) \label{e}\\
&  -\left(  \partial_{t}\left(  g\left(  u\right)  \right)  ,\Delta_{N}%
^{-1}U_{t}\right)  +\left(  \int_{\Omega}\partial_{t}\left(  g\left(
u\right)  \right)  ,\Delta_{N}^{-1}U_{t}\right)  \text{.}\nonumber
\end{align}
Using (\ref{K3}), (\ref{def:mu}) and (\ref{xxx}) we estimate%
\begin{align*}
\left\Vert \mu_{t}\nabla w+\mu\nabla w_{t}\right\Vert _{L^{2}(\Omega)}  &
\leq\left\Vert u_{t}(1-2u)\nabla w\right\Vert _{L^{2}(\Omega)}+\left\Vert
\nabla(K\ast(1-2u))_{t}\right\Vert _{L^{2}(\Omega)}\\
&  \leq\left\Vert u_{t}\right\Vert _{L^{2}(\Omega)}\left\Vert \nabla
w\right\Vert _{L^{\infty}(\Omega)}+\left\Vert \nabla(K\ast u_{t})\right\Vert
_{L^{2}(\Omega)}\\
&  \leq C\left\Vert u_{t}\right\Vert _{L^{2}(\Omega)}\text{.}%
\end{align*}
Hence, using (\ref{medLim}), we get
\begin{align}
(\mu_{t}\nabla w+\mu\nabla w_{t},\nabla\Delta_{N}^{-1}U_{t})  &  \leq
C\left\Vert u_{t}\right\Vert _{L^{2}(\Omega)}\left\Vert \nabla\Delta_{N}%
^{-1}U_{t}\right\Vert _{L^{2}(\Omega)}\label{ee}\\
&  \leq C(1+\left\Vert U_{t}\right\Vert _{L^{2}(\Omega)})\left\Vert
U_{t}\right\Vert _{H_{0}^{-1}(\Omega)}\nonumber\\
&  \leq\frac{1}{2}\left\Vert U_{t}\right\Vert _{L^{2}(\Omega)}^{2}+C\left\Vert
U_{t}\right\Vert _{H_{0}^{-1}(\Omega)}^{2}+D\left\Vert U_{t}\right\Vert
_{H_{0}^{-1}(\Omega)}\text{.}\nonumber
\end{align}
Assumptions (\ref{ipo:gLip}) and (\ref{ipo:gReg3}) together with  (\ref{medLim}) yield
\begin{align}
\left\vert \left(  \partial_{t}g(u),\Delta_{N}^{-1}U_{t}\right)
_{L^{2}(\Omega)}\right\vert  &  =\left\vert \left(  g_{u}(u)u_{t},\Delta
_{N}^{-1}U_{t}\right)  _{L^{2}(\Omega)}\right. \label{uuuu}\\
&  \left.  +\left(  g_{t}(u),\Delta_{N}^{-1}U_{t}\right)  _{L^{2}(\Omega
)}\right\vert \nonumber\\
&  \leq L\left\Vert u_{t}\right\Vert _{L^{2}\left(  \Omega\right)  }\left\Vert
\Delta_{N}^{-1}U_{t}\right\Vert _{L^{2}(\Omega)}+C\left\Vert \Delta_{N}%
^{-1}U_{t}\right\Vert _{L^{2}(\Omega)}\nonumber\\
&  \leq\frac{1}{4}\left\Vert U_{t}\right\Vert _{L^{2}(\Omega)}^{2}+C\left\Vert
\Delta_{N}^{-1}U_{t}\right\Vert _{L^{2}(\Omega)}^{2}\nonumber\\
&  +C\left\Vert \Delta_{N}^{-1}U_{t}\right\Vert _{L^{2}(\Omega)}%
+C\text{.}\nonumber
\end{align}
Since $\Delta_{N}^{-1}U_{t}\in H_{0}^{1}(\Omega)$, thanks to Poincar\'{e}'s
inequality, we have $\left\Vert \Delta_{N}^{-1}U_{t}\right\Vert _{L^{2}%
(\Omega)}\leq C\left\Vert \nabla\Delta_{N}^{-1}U_{t}\right\Vert _{L^{2}%
(\Omega)}=C\left\Vert U_{t}\right\Vert _{H_{0}^{-1}(\Omega)}$. From
(\ref{uuuu}) it follows
\begin{align}
\left\vert \left(  \partial_{t}g(u),\Delta_{N}^{-1}U_{t}\right)
_{L^{2}(\Omega)}\right\vert  &  \leq\frac{1}{4}\left\Vert U_{t}\right\Vert
_{L^{2}(\Omega)}^{2}\label{eee}\\
&  +C\left\Vert U_{t}\right\Vert _{H_{0}^{-1}(\Omega)}+D\left\Vert
U_{t}\right\Vert _{H_{0}^{-1}(\Omega)}^{2}\text{. }\nonumber
\end{align}
Similarly we get %
\begin{align}
\left\vert \left(  \int_{\Omega}\partial_{t}g(u),\Delta_{N}^{-1}U_{t}\right)
_{L^{2}(\Omega)}\right\vert  &  \leq\frac{1}{8}\left\Vert U_{t}\right\Vert
_{L^{2}(\Omega)}^{2}\label{eeee}\\
&  +\left\vert \Omega\right\vert C\left\Vert U_{t}\right\Vert _{H_{0}%
^{-1}(\Omega)}+D\left\vert \Omega\right\vert ^{2}\left\Vert U_{t}\right\Vert
_{H_{0}^{-1}(\Omega)}^{2}.\nonumber
\end{align}
Finally, (\ref{e}), (\ref{ee}), (\ref{eee}) and (\ref{eeee}) yield
\begin{align*}
\frac{1}{2}\frac{d}{dt}\left\Vert U_{t}\right\Vert _{H_{0}^{-1}(\Omega)}%
^{2}+\frac{1}{8}\left\Vert U_{t}\right\Vert _{L^{2}(\Omega)}^{2}  &  \leq
C\left\Vert U_{t}\right\Vert _{H_{0}^{-1}(\Omega)}+C\left\Vert U_{t}%
\right\Vert _{H_{0}^{-1}(\Omega)}^{2}\\
&  \leq C+C\left\Vert U_{t}\right\Vert _{H_{0}^{-1}(\Omega)}^{2}\text{.}%
\end{align*}
Integrating in time and using Gronwall's Lemma \ we get $\left\Vert
U_{t}\right\Vert _{L^{\infty}(0,T,H_{0}^{-1}(\Omega))}+\left\Vert
U_{t}\right\Vert _{L^{2}(0,T,L^{2}(\Omega))}\leq C$ and so (recalling
(\ref{medLim})) that
\[
\left\Vert u_{t}\right\Vert _{L^{\infty}(0,T,\left(  H^{1}(\Omega)\right)
^{\ast})}+\left\Vert u_{t}\right\Vert _{L^{2}(0,T,L^{2}(\Omega))}\leq
C\text{.}%
\]
This concludes the proof of the lemma.
\end{proof}

\begin{lemma}
\label{lemma2}Let the assumptions of Theorem \ref{teorego} be satisfied. Then
the solution$~u$ to (\ref{eq:CH})-(\ref{eq:ic}) in the sense of Definition
\ref{def1} satisfies
\[
u_{t}\in L^{\infty}(0,T,L^{2}(\Omega))\cap L^{2}(0,T,H^{1}\left(
\Omega\right)  ).
\]

\end{lemma}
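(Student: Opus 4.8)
The plan is to bootstrap the regularity of Lemma~\ref{lemma1} by differentiating the weak formulation \eqref{CHWEAK} in time and testing with $u_t$; as in the proof of Lemma~\ref{lemma1} I proceed formally, the argument being made rigorous by replacing the time derivative with the difference quotient $\partial_t^h u=(u(\cdot+h)-u)/h$ (equivalently, by running the estimate on the approximate problem $P_\varepsilon$ and passing to the limit). The first step is to check that $u_t(0)\in L^2(\Omega)$. Setting $F_0:=\nabla u_0+\mu(u_0)\nabla(K\ast(1-2u_0))$, the weak formulation at $t=0$ reads $\langle u_t(0),\psi\rangle=-(F_0,\nabla\psi)+(g(u_0),\psi)$ for all $\psi\in H^1(\Omega)$. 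Since $u_0\in H^2(\Omega)$ by \eqref{u0h2}, $\nabla u_0\in H^1(\Omega)$; moreover $\nabla(K\ast(1-2u_0))\in L^\infty(\Omega)$ by \eqref{K3} with $p=\infty$ (using $0\le u_0\le1$) and $\Delta(K\ast(1-2u_0))\in L^2(\Omega)$ by \eqref{K4}, so that, expanding the gradient and using $0\le\mu(u_0)\le\frac14$ and $|\mu'(u_0)|\le1$, one gets $\mu(u_0)\nabla(K\ast(1-2u_0))\in H^1(\Omega)$. Hence $F_0\in H^1(\Omega)^d$, its normal trace is well defined and vanishes by the compatibility condition \eqref{u0h3}, and an integration by parts gives $u_t(0)=\nabla\cdot F_0+g(u_0)\in L^2(\Omega)$ by \eqref{ipo:Gi3}.

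Next I differentiate \eqref{CHWEAK} in time and choose $\psi=u_t$, obtaining
\[
\tfrac12\tfrac{d}{dt}\|u_t\|_{L^2(\Omega)}^2+\|\nabla u_t\|_{L^2(\Omega)}^2=-\bigl((\mu(u)\nabla w)_t,\nabla u_t\bigr)+\bigl(\partial_t g(u),u_t\bigr).
\]
For the first term I use $(\mu(u)\nabla w)_t=(1-2u)u_t\nabla w-2\mu(u)\nabla(K\ast u_t)$; since $0\le u\le1$, \eqref{K3} with $p=\infty$ gives $\|\nabla w\|_{L^\infty(\Omega)}\le C$ and \eqref{K3} with $p=2$ gives $\|\nabla(K\ast u_t)\|_{L^2(\Omega)}\le C\|u_t\|_{L^2(\Omega)}$, so $\|(\mu(u)\nabla w)_t\|_{L^2(\Omega)}\le C\|u_t\|_{L^2(\Omega)}$ and, by Young's inequality, $|((\mu(u)\nabla w)_t,\nabla u_t)|\le\frac12\|\nabla u_t\|_{L^2(\Omega)}^2+C\|u_t\|_{L^2(\Omega)}^2$. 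For the second term, $\partial_t g(u)=g_s(x,t,u)u_t+g_t(x,t,u)$ with $|g_s|\le L$ by \eqref{ipo:gUni} and $|g_t|\le L$ by \eqref{ipo:gReg3}, so $|(\partial_t g(u),u_t)|\le C\|u_t\|_{L^2(\Omega)}^2+C$. Collecting these estimates yields
\[
\tfrac{d}{dt}\|u_t\|_{L^2(\Omega)}^2+\|\nabla u_t\|_{L^2(\Omega)}^2\le C\|u_t\|_{L^2(\Omega)}^2+C,
\]
and integration on $(0,t)$ together with $u_t(0)\in L^2(\Omega)$ and Gronwall's lemma give $\|u_t\|_{L^\infty(0,T,L^2(\Omega))}+\|\nabla u_t\|_{L^2(0,T,L^2(\Omega))}\le C$; since the first bound yields the $L^2(0,T,L^2(\Omega))$ part, this is exactly $u_t\in L^\infty(0,T,L^2(\Omega))\cap L^2(0,T,H^1(\Omega))$.

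The step I expect to be the main obstacle is the rigorous justification of the time differentiation: one has to run the above computation on the difference quotients $\partial_t^h u$ (as announced in the proof of Lemma~\ref{lemma1}), and the delicate point is to bound $\|\partial_t^h u(0)\|_{L^2(\Omega)}$ uniformly in $h$ — this is exactly where the extra hypotheses $u_0\in H^2(\Omega)$ and the compatibility condition \eqref{u0h3}, beyond what was needed in Lemma~\ref{lemma1}, enter in an essential way. All the remaining estimates are routine, relying only on \eqref{K3}--\eqref{K4}, the bounds on $\mu$ and $\mu'$, the Lipschitz bounds on $g$, and Gronwall's lemma.
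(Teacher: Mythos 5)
Your proof is correct and follows essentially the same route as the paper: differentiate the weak formulation in time, test with the time derivative, estimate $(\mu\nabla w)_t$ via \eqref{K3} and $\partial_t g(u)$ via \eqref{ipo:gLip}, \eqref{ipo:gReg3}, and close with Gronwall using the $L^2$ bound on $u_t(0)$. The only (harmless) differences are that the paper tests with the zero-mean part $U_t=u_t-\bar u_t$ rather than $u_t$ itself, and that you spell out the role of the compatibility condition \eqref{u0h3} in getting $u_t(0)\in L^2(\Omega)$ where the paper only cites \eqref{u0h2} -- your more explicit version is in fact the accurate one.
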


\begin{proof}
Thanks to (\ref{medLim}) and to the fact that $\nabla\bar{u}_{t}=0$,  we need only to prove
that $U_{t}\in L^{\infty}(0,T,L^{2}(\Omega))\cap L^{2}(0,T,H^{1}\left(
\Omega\right)  )$. We proceed as in Lemma \ref{lemma1}, but after
differentiating in time we multiply by $U_{t}$ (instead of $\Delta_{N}%
^{-1}U_{t}$). After integrating by parts with respect to $t$ we obtain%
\begin{align*}
\frac{1}{2}\frac{d}{dt}\left\Vert U_{t}\right\Vert _{L^{2}(\Omega)}^{2}  &
=-(\mu_{t}\nabla w+\mu\nabla w_{t}+\nabla U_{t},\nabla U_{t})_{L^{2}(\Omega
)}\\
&  \quad+\left(  \partial_{t}g(u),U_{t}\right)  _{L^{2}(\Omega)}-\left(
\int_{\Omega}\partial_{t}g(u),U_{t}\right)  _{L^{2}(\Omega)}\\
&  \leq C\left\Vert u_{t}\right\Vert _{L^{2}(\Omega)}\left\Vert \nabla
U_{t}\right\Vert _{L^{2}(\Omega)}-\left\Vert \nabla U_{t}\right\Vert
_{L^{2}(\Omega)}^{2}+D\left\Vert U_{t}\right\Vert _{L^{2}(\Omega)}\\
&  \leq C\left\Vert U_{t}\right\Vert _{L^{2}(\Omega)}+D\left\Vert
U_{t}\right\Vert _{L^{2}(\Omega)}^{2}-\frac{1}{2}\left\Vert \nabla
U_{t}\right\Vert _{L^{2}(\Omega)}^{2}\text{.}%
\end{align*}
Integrating with respect to $t$, we get%
\[
\left\Vert U_{t}(t)\right\Vert _{L^{2}(\Omega)}^{2}+\int_{0}^{t}\left\Vert
\nabla U_{t}\right\Vert _{L^{2}(\Omega)}^{2}\leq\left\Vert U_{t}(0)\right\Vert
_{L^{2}(\Omega)}^{2}+C\int_{0}^{t}\left\Vert U_{t}\right\Vert _{L^{2}(\Omega
)}^{2}\text{.}%
\]
We remark that $\left\Vert U_{t}(0)\right\Vert _{L^{2}(\Omega)}^{2}$ is bounded
(thanks to (\ref{u0h2})). This, coupled with Lemma \ref{lemma1} and Gronwall's
Lemma, yields%
\[
\left\Vert u_{t}\right\Vert _{L^{\infty}(0,T,L^{2}(\Omega))}+\left\Vert
u_{t}\right\Vert _{L^{2}(0,T,H^{1}(\Omega))}\leq C\text{.}%
\]
This concludes the proof of the lemma. 
\end{proof}

\begin{lemma}
\label{lemma3}Let the assumptions of Theorem \ref{teorego} be satisfied. Then
the solution $u$ to (\ref{eq:CH})-(\ref{eq:ic}) in the sense of Definition
\ref{def1} satisfies
\begin{equation}
\nabla u\in L^{\infty}(0,T,L^{2}(\Omega)). \label{lolo}%
\end{equation}

\end{lemma}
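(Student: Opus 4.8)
The idea is to regard \eqref{CHWEAK} at (almost) each fixed time as an elliptic identity for $u(t)$ and to test it against $u(t)$ itself; the extra ingredient that makes this work is the improved time-regularity $u_t\in L^{\infty}(0,T,(H^{1}(\Omega))^{\ast})$ already established in Lemma \ref{lemma1}. No Gronwall argument is needed: the bound will be uniform in $t$ simply because every term on the right-hand side is controlled by a constant plus a harmless multiple of $\|\nabla u(t)\|_{L^{2}(\Omega)}$.

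First I would fix $t$ in the full-measure subset of $(0,T)$ on which \eqref{CHWEAK} holds, $u(t)\in H^{1}(\Omega)$ (true a.e.\ since $u\in L^{2}(0,T,H^{1}(\Omega))$), and $\|u_t(t)\|_{(H^{1}(\Omega))^{\ast}}\le C$ (by Lemma \ref{lemma1}). Taking $\psi=u(t)$ in \eqref{CHWEAK} gives
\[
\|\nabla u(t)\|_{L^{2}(\Omega)}^{2}=(g(u),u)-\langle u_t,u\rangle-(\mu(u)\nabla w,\nabla u).
\]
I would then bound the three terms on the right. By \eqref{ipo:Gi3} and $0\le u\le 1$ (see \eqref{xxx}), $|(g(u),u)|\le|\Omega|^{1/2}\|g(u)\|_{L^{2}(\Omega)}\le C$. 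Since $\mu(u)=u(1-u)\in[0,\frac{1}{4}]$ by \eqref{def:mu}, and since \eqref{K3} gives $\|\nabla w\|_{L^{2}(\Omega)}=\|\nabla\bigl(K\ast(1-2u)\bigr)\|_{L^{2}(\Omega)}\le r_{2}\|1-2u\|_{L^{2}(\Omega)}\le C$, we obtain $|(\mu(u)\nabla w,\nabla u)|\le C\|\nabla u(t)\|_{L^{2}(\Omega)}$. Finally $|\langle u_t,u\rangle|\le\|u_t(t)\|_{(H^{1}(\Omega))^{\ast}}\|u(t)\|_{H^{1}(\Omega)}\le C\bigl(\|u(t)\|_{L^{2}(\Omega)}+\|\nabla u(t)\|_{L^{2}(\Omega)}\bigr)\le C+C\|\nabla u(t)\|_{L^{2}(\Omega)}$, again using Lemma \ref{lemma1} and \eqref{xxx}. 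Collecting these estimates and using Young's inequality to absorb the two linear occurrences of $\|\nabla u(t)\|_{L^{2}(\Omega)}$ into $\frac{1}{2}\|\nabla u(t)\|_{L^{2}(\Omega)}^{2}$ leaves $\|\nabla u(t)\|_{L^{2}(\Omega)}^{2}\le C$ with $C$ independent of $t$, which is \eqref{lolo}.

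The step to watch is the very first one: the conclusion genuinely uses the $L^{\infty}$-in-time bound on $u_t$ furnished by Lemma \ref{lemma1}; the solution class of Definition \ref{def1} by itself only gives $u_t\in L^{2}(0,T,(H^{1}(\Omega))^{\ast})$, which would yield $\nabla u\in L^{2}(0,T,L^{2}(\Omega))$ but not the uniform-in-time bound. (As an alternative one could test \eqref{CHWEAK} with $u_t$, integrate the mixed term $(\mu(u)\nabla w,\partial_t\nabla u)$ by parts in time using $\|(\mu(u)\nabla w)_t\|_{L^{2}(\Omega)}\le C\|u_t\|_{L^{2}(\Omega)}$ from the proof of Lemma \ref{lemma1}, and close with Gronwall; the direct test with $u$ is shorter.) Everything else is a routine use of $0\le u\le 1$, the boundedness of the mobility $\mu$, and the mapping property \eqref{K3} of the kernel.
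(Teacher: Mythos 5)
Your proof is correct, but it takes a genuinely different route from the paper's. The paper disposes of this lemma in one line: from \eqref{rego} one has $\nabla u\in L^{2}(0,T,L^{2}(\Omega))$, and Lemma \ref{lemma2} gives $u_{t}\in L^{2}(0,T,H^{1}(\Omega))$, hence $\partial_{t}\nabla u\in L^{2}(0,T,L^{2}(\Omega))$; therefore $\nabla u\in H^{1}(0,T,L^{2}(\Omega))\hookrightarrow C([0,T],L^{2}(\Omega))$, which is even slightly stronger than \eqref{lolo}. You instead freeze $t$, read \eqref{CHWEAK} as an elliptic identity, test with $u(t)$, and absorb the linear terms by Young's inequality; the only time-regularity you invoke is the $L^{\infty}(0,T,(H^{1}(\Omega))^{\ast})$ bound on $u_{t}$ from Lemma \ref{lemma1}. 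Your estimates are all sound: $(g(u),u)$ is bounded by \eqref{ipo:Gi3} and \eqref{xxx}, $\|\mu(u)\nabla w\|_{L^{2}(\Omega)}\le C$ by \eqref{def:mu} and \eqref{K3}, and the duality pairing is controlled exactly as you say. What your approach buys is logical economy: it bypasses Lemma \ref{lemma2} entirely (which is the hardest of the preliminary lemmas, requiring the formal differentiation in time and the bound on $\|U_t(0)\|_{L^2(\Omega)}$ via \eqref{u0h2}), so if one only wanted \eqref{lolo} your argument would be the cheaper path. What the paper's route buys is the continuity of $\nabla u$ in time and, of course, brevity given that Lemma \ref{lemma2} is needed anyway for Lemma \ref{lemma4} (where $u_{t}\in L^{\infty}(0,T,L^{2}(\Omega))$ is used to put $\xi$ in $L^{\infty}(0,T,L^{2}(\Omega))$). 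Your closing remark correctly identifies the crux: the solution class alone only yields an $L^{2}$-in-time bound on $\nabla u$, and it is the uniform-in-time control of $u_{t}$ that upgrades it.
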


\begin{proof}
Thanks to (\ref{rego}) and Lemma \ref{lemma2} we have $\nabla u\in
H^{1}(0,T,L^{2}(\Omega))$ and hence (\ref{lolo}) follows.
\end{proof}

\begin{lemma}
\label{lemma4}Let the assumptions of Theorem \ref{teorego} be satisfied. Then
the solution$~u$ to (\ref{eq:CH})-(\ref{eq:ic}) in the sense of Definition
\ref{def1} satisfies $u\in L^{\infty}(0,T,H^{2}(\Omega))$.
\end{lemma}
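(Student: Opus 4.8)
The plan is to recover the spatial regularity of $u$ by freezing the time variable and reading the variational identity (\ref{CHWEAK}) as an elliptic boundary value problem of Neumann type for $u(t)$, applying classical $H^{2}$‑regularity theory, and then taking the essential supremum over $t$ with the help of the bounds already obtained in Lemmas~\ref{lemma1}--\ref{lemma3}. Since $\mu(u)\nabla v=\nabla u+\mu(u)\nabla w$ by (\ref{def:mu}), the weak formulation (\ref{CHWEAK}) (which also encodes the flux boundary condition (\ref{eq:bc})) says precisely that, for a.a.~$t\in(0,T)$, the function $u(t)$ is a weak solution of
\[
-\Delta u=g(u)-u_{t}+\nabla\cdot(\mu(u)\nabla w)=:F\ \text{ in }\Omega,\qquad n\cdot\nabla u=-\mu(u)\,n\cdot\nabla w=:h\ \text{ on }\partial\Omega.
\]

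First I would check that the forcing term satisfies $F\in L^{\infty}(0,T;L^{2}(\Omega))$: $g(u)$ is bounded by (\ref{ipo:Gi3}); $u_{t}\in L^{\infty}(0,T;L^{2}(\Omega))$ by Lemma~\ref{lemma2}; and, writing $\nabla\cdot(\mu(u)\nabla w)=(1-2u)\,\nabla u\cdot\nabla w+\mu(u)\,\Delta w$, the first summand lies in $L^{\infty}(0,T;L^{2}(\Omega))$ because $\nabla u\in L^{\infty}(0,T;L^{2}(\Omega))$ (Lemma~\ref{lemma3}), $u\in L^{\infty}(Q)$ and $\nabla w\in C([0,T];L^{\infty}(\Omega))$ (Definition~\ref{def1}), while the second summand does because $\mu(u)\in L^{\infty}(Q)$ and $\Delta w\in L^{\infty}(0,T;L^{2}(\Omega))$ by (\ref{K4}) (note $1-2u\in L^{\infty}(0,T;H^{1}(\Omega))$, again by Lemma~\ref{lemma3}). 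Next I would verify that the Neumann datum $h$ is admissible, and this is the step I expect to be the main obstacle: one must show that the vector field $\mu(u)\nabla w$ belongs, uniformly in time, to $H^{1}(\Omega;\mathbb{R}^{d})$. Expanding $\partial_{i}\bigl(\mu(u)\,\partial_{j}w\bigr)=(1-2u)\,\partial_{i}u\,\partial_{j}w+\mu(u)\,\partial_{i}\partial_{j}w$, the first term is controlled in $L^{\infty}(0,T;L^{2}(\Omega))$ by combining $\nabla u\in L^{\infty}(0,T;L^{2}(\Omega))$ (Lemma~\ref{lemma3}) with $\nabla w\in C([0,T];L^{\infty}(\Omega))$, and the second term by (\ref{K4}) together with $u\in L^{\infty}(Q)$; hence, by the trace theorem, $h=-\mu(u)\,n\cdot\nabla w\in L^{\infty}(0,T;H^{1/2}(\partial\Omega))$.

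Finally, since $\partial\Omega$ is of class $C^{1,1}$, classical $H^{2}$‑regularity for the Neumann problem gives, for a.a.~$t\in(0,T)$,
\[
\|u(t)\|_{H^{2}(\Omega)}\le C\bigl(\|F(t)\|_{L^{2}(\Omega)}+\|h(t)\|_{H^{1/2}(\partial\Omega)}+\|u(t)\|_{H^{1}(\Omega)}\bigr),
\]
and the right-hand side is bounded uniformly in $t$ by the estimates above together with (\ref{rego}) and Lemma~\ref{lemma3}; taking the essential supremum over $t$ yields $u\in L^{\infty}(0,T;H^{2}(\Omega))$. The passage from "$u(t)$ is an $H^{1}$ weak solution with $L^{2}$ datum and $H^{1/2}$ flux" to "$u(t)\in H^{2}(\Omega)$" is the standard elliptic regularity statement on $C^{1,1}$ domains and requires no further approximation; the whole difficulty is concentrated in the uniform-in-time $H^{1}$ bound for $\mu(u)\nabla w$, where the second-order kernel estimate (\ref{K4}) and Lemma~\ref{lemma3} are used in tandem.
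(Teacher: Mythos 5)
Your proposal is correct and follows essentially the same route as the paper: freeze $t$, read \eqref{CHWEAK} as a Neumann problem for $u(t)$ with $L^{2}$ right-hand side controlled via Lemmas~\ref{lemma2}--\ref{lemma3} and \eqref{K4}, bound the flux datum in $H^{1/2}(\partial\Omega)$ uniformly in time, and invoke elliptic regularity (Theorem~\ref{elliptic rego}). The only cosmetic difference is that you obtain the boundary datum by bounding the whole field $\mu(u)\nabla w$ in $L^{\infty}(0,T;H^{1}(\Omega;\mathbb{R}^{d}))$ and taking the normal trace, whereas the paper multiplies the separate traces of $\mu(u)\in H^{1}\cap L^{\infty}$ and $\partial w/\partial n\in H^{1/2}\cap L^{\infty}$; both are valid.
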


\begin{proof}
We rewrite equation (\ref{CHWEAK}) in the form:%
\begin{align*}
\left\langle u_{t},\psi\right\rangle  &  =\left\langle \Delta u,\psi
\right\rangle +\left(  (1-2u)\nabla u\nabla w+\mu\Delta w+g(u),\psi\right) \\
\forall\psi &  \in H^{1}(\Omega)~\hbox{ and a.a.}~t\in(0,T).
\end{align*}
We remark that $u_{t}\in L^{\infty}(0,T,L^{2}(\Omega))$ thanks to Lemma
\ref{lemma2}; $(1-2u)\nabla u\nabla w$ $\in L^{\infty}(0,T,L^{2}(\Omega))$ as
a consequence of Lemma \ref{lemma3}; $\mu\Delta w\in L^{\infty}(0,T,L^{2}%
(\Omega))$ because of (\ref{K4}) and Lemma \ref{lemma3}. From (\ref{ipo:Gi3})
follows $g(u)\in L^{\infty}(0,T,L^{2}(\Omega))$. So%
\begin{equation}
\left\langle \Delta u,\psi\right\rangle =\left(  \xi,\psi\right)  ~\forall
\psi\in H^{1}(\Omega)~\text{\ for }a.a.~t\in(0,T) \label{tro}%
\end{equation}
where%
\begin{equation}
\xi=u_{t}+(1-2u)\nabla u\nabla w+\mu\Delta w+g(u)\in L^{\infty}(0,T,L^{2}%
(\Omega)). \label{cento39}%
\end{equation}
Thanks to (\ref{xxx}) and Lemma \ref{lemma3}, we have
\[
u\in L^{\infty}(0,T,H^{1}(\Omega))\cap L^{\infty}(Q)\text{.}%
\]
So, through (\ref{K3}) and (\ref{K4}) we get
\begin{align*}
&w  \in L^{\infty}(0,T,H^{2}(\Omega))\cap L^{\infty}\left(  0,T,W^{1,\infty
}(\Omega)\right)  \text{ and }
\nabla w    \in L^{\infty}(0,T,H^{1}(\Omega))\cap L^{\infty}\left(  Q\right)
\text{.}%
\end{align*}
Furthermore, since $\partial\Omega\in Lip$, then $n\in L^{\infty}%
(\partial\Omega)$, where $n$ denotes the outer unit normal on $\partial\Omega
$. Hence (see \cite{BG}, Theorem 2.7.4), we have
\begin{equation}
\frac{\partial w}{\partial n}\in L^{\infty}\left(  0,T,H^{1/2}(\partial
\Omega)\right)  \cap L^{\infty}\left(  0,T,L^{\infty}\left(  \partial
\Omega\right)  \right)  \text{.} \label{tu}%
\end{equation}
Thanks to (\ref{xxx}) and Lemma \ref{lemma3}, $\nabla\mu(u)=(1-2u)\nabla u\in
L^{\infty}(0,T,L^{2}\left(  \Omega\right)  )$. Thus
\begin{equation}
\mu\left(  u\right)  \in L^{\infty}(0,T,H^{1/2}\left(  \Omega\right)
)\text{.} \label{tutu}%
\end{equation}
Recalling that $0\leq u\leq1$ and $0\leq\mu\left(  s\right)  \leq1$ for every
$s\in\lbrack0,1]$ we can extend $\mu$ so that $0\leq\mu\left(  s\right)
\leq1$ for every $s\in%
\mathbb{R}
$. Hence,%
\begin{equation}
\mu\left(  u\right)  \in L^{\infty}\left(  0,T,L^{\infty}\left(
\partial\Omega\right)  \right)  \text{.} \label{tututu}%
\end{equation}
Combining (\ref{tu}), (\ref{tutu}) and (\ref{tututu}) we obtain%
\begin{equation}
\mu(u)\frac{\partial w}{\partial n}\in L^{\infty}\left(  0,T,H^{1/2}%
(\partial\Omega)\right)  \text{.} \label{tr}%
\end{equation}
From (\ref{eq:bc}) follows%
\[
\frac{\partial u}{\partial n}=n\cdot\mu\nabla w=\mu(u)\frac{\partial
w}{\partial n}~a.e.~\text{on }\partial\Omega,
\]
and so, thanks to (\ref{tr}),%
\begin{equation}
\frac{\partial u}{\partial n}\in L^{\infty}\left(  0,T,H^{1/2}(\partial
\Omega)\right)  \text{.} \label{lop}%
\end{equation}
Finally, using an elliptic regularity theorem (Theorem \ref{elliptic rego} in
the Appendix), we get
\[
u\in H^{2}(\Omega)\text{ for a.a.}~t\in (0,T)
\]
and
\[
\left\Vert u\right\Vert _{H^{2}(\Omega)}\leq C\left(  \left\Vert u\right\Vert
_{L^{2}(\Omega)}+\left\Vert \xi\right\Vert _{L^{2}(\Omega)}+\left\Vert
\frac{\partial u}{\partial n}\right\Vert _{H^{1/2}(\partial\Omega)}\right)
\text{ for }a.a.~t\in\lbrack0,T]\text{.}%
\]
Combining $u\in L^{\infty}(0,T,L^{2}(\Omega))$, (\ref{cento39}) and
(\ref{lop}) we obtain
\[
u\in L^{\infty}(0,T,H^{2}\left(  \Omega\right)  )\text{.}%
\] 
This concludes the proof of the lemma. 
\end{proof}

Theorem \ref{teorego} follows directly from Lemma \ref{lemma4}.

In order to prove Corollary \ref{Cororego} we proceed as follows. Since
\[
u\in L^{2}(0,T,H^{1}(\Omega)),
\]
we have that for $a.a.~T_{0}\in(0,T),$ $u(T_{0})\in H^{1}(\Omega)$. Hence, we
can prove Lemma \ref{lemma1} for the solution to (\ref{eq:CH})-(\ref{eq:bc})
on $[T_{0}-\varepsilon,T]$ where $0<\varepsilon<T_{0}/2$. Therefore, there
exists $s\in\lbrack T_{0}-\varepsilon,T_{0}]$ such that $\left\Vert
u_{t}(s)\right\Vert _{L^{2}(\Omega)}$ is finite. We now proceed as in Lemma
\ref{lemma2}, \ref{lemma3} and \ref{lemma4} working on the set $[s,T]$ and
choosing $u(s)$ as initial data and we get $u\in L^{\infty}(s,T,H^{2}%
(\Omega))$ and so $u\in L^{\infty}(T_{0},T,H^{2}(\Omega))$.

\bigskip

\subsection{Proof of Theorem \ref{teo v reg}}

We now prove Theorem \ref{teo v reg}. Let the assumption of Theorem
\ref{teo v reg} be satisfied. Let $u_{\varepsilon}$ be the solution of the
approximate problem $P_{\varepsilon}$.

We first prove, by applying an Alikakos' iteration argument as in \cite[Theorem 2.1]{BH2},
that the family of approximate solutions $u_{\varepsilon}$ is
uniformly bounded in $L^{\infty}\left(  \Omega\right)  $. To see this, let us
take $\psi=|u_{\varepsilon}|^{p-1}u_{\varepsilon}$ as test function in
(\ref{eq:CHrel}), where $p>1$. Then we get the following differential
identity:
\begin{align}
&  \frac{1}{p+1}\frac{d}{dt}\left\Vert u_{\varepsilon}\right\Vert
_{L^{p+1}(\Omega)}^{p+1}+p\int_{\Omega}\mu_{\varepsilon}(u_{\varepsilon
})f_{\varepsilon}^{\prime\prime}(u_{\varepsilon})|\nabla u_{\varepsilon}%
|^{2}|u_{\varepsilon}|^{p-1}\label{orco}\\
\nonumber
&+p\int_{\Omega}\mu_{\varepsilon}(u_{\varepsilon})\nabla w_{\varepsilon}\nabla
u_{\varepsilon}|u_{\varepsilon}|^{p-1}    =\int_{\Omega}g(u_{\varepsilon
})u_{\varepsilon}|u_{\varepsilon}|^{p-1}.
\end{align}
Actually, the above choice of test function would not be generally admissible.
Nevertheless, the argument can be made rigorous by means of a density
procedure, e.g., by first truncating the test function $|u_{\varepsilon
}|^{p-1}u_{\varepsilon}$ and then passing to the limit with respect to the
truncation parameter. By using (\ref{def:mu-epsilon}), (\ref{def:f-epsiolon})
we obtain
\begin{equation}
p\int_{\Omega}\mu_{\varepsilon}(u_{\varepsilon})f_{\varepsilon}^{\prime\prime
}(u_{\varepsilon})|\nabla u_{\varepsilon}|^{2}|u_{\varepsilon}|^{p-1}\geq
\frac{4p}{\left(  p+1\right)  ^{2}}c\int_{\Omega}\left\vert \nabla
|u_{\varepsilon}|^{\frac{p+1}{2}}\right\vert ^{2} \label{gigi}%
\end{equation}
where $c>0$ not depending on $\varepsilon$. Therefore, by combining
(\ref{orco}) with (\ref{gigi}) we deduce%
\begin{align}
&  \frac{1}{p+1}\frac{d}{dt}\left\Vert u_{\varepsilon}\right\Vert
_{L^{p+1}(\Omega)}^{p+1}+\frac{4p}{\left(  p+1\right)  ^{2}}c\int_{\Omega
}\left\vert \nabla|u_{\varepsilon}|^{\frac{p+1}{2}}\right\vert ^{2}%
+p\int_{\Omega}\mu_{\varepsilon}(u_{\varepsilon})\nabla w_{\varepsilon}\nabla
u_{\varepsilon}|u_{\varepsilon}|^{p-1}\label{after}\\
&  \leq\int_{\Omega}g(u_{\varepsilon})u_{\varepsilon}|u_{\varepsilon}%
|^{p-1}\leq C\int_{\Omega}|u_{\varepsilon}|^{p}\text{. }\nonumber
\end{align}
Starting from (\ref{after}) and using the fact that $|\mu_{\varepsilon
}(u_{\varepsilon})|\leq C$, we can argue exactly as in \cite[Proof of
Theorem 2.1]{BH2} in order to conclude that
\begin{equation}
\left\Vert u_{\varepsilon}\right\Vert _{L^{\infty}(\Omega)}\leq C\text{. }
\label{linftybound}%
\end{equation}
Hence we can choose $\psi=f_{\varepsilon}^{\prime\prime}(u_{\varepsilon
})f_{\varepsilon}^{\prime}(u_{\varepsilon})$ as test function in
(\ref{eq:CHrel}) and get%
\begin{align}
&  \frac{1}{2}\frac{d}{dt}\left\Vert f_{\varepsilon}^{\prime}(u_{\varepsilon
})\right\Vert _{L^{2}(\Omega)}^{2}+\int_{\Omega}\mu_{\varepsilon}\nabla
v_{\varepsilon}f_{\varepsilon}^{\prime\prime}(u_{\varepsilon})\nabla
(f_{\varepsilon}^{\prime}(u_{\varepsilon}))\label{step1}
+\int_{\Omega}\mu_{\varepsilon}\nabla v_{\varepsilon}f_{\varepsilon}%
^{\prime\prime\prime}(u_{\varepsilon})f_{\varepsilon}^{\prime}(u_{\varepsilon
})\nabla u_{\varepsilon}    =\int_{\Omega}g^{1}(u_{\varepsilon}%
)f_{\varepsilon}^{\prime\prime}(u_{\varepsilon})f_{\varepsilon}^{\prime
}(u_{\varepsilon}).
\end{align}
We now observe that, thanks to (\ref{we giovane}) and (\ref{propf}),
$f_{\varepsilon}^{\prime\prime}(s)f_{\varepsilon}^{\prime}(s)\leq0$ if $s<1/2$
and $f_{\varepsilon}^{\prime\prime}(s)f_{\varepsilon}^{\prime}(s)\geq0$ if
$s>1/2$. Thus, recalling (\ref{G5bis}), we have%
\begin{align*}
&\int_{u_{\varepsilon}<0}g^{1}(u_{\varepsilon})f_{\varepsilon}^{\prime\prime
}(u_{\varepsilon})f_{\varepsilon}^{\prime}(u_{\varepsilon})   \leq0\text{
and }
\int_{u_{\varepsilon}>1}g^{1}(u_{\varepsilon})f_{\varepsilon}^{\prime\prime
}(u_{\varepsilon})f_{\varepsilon}^{\prime}(u_{\varepsilon})   \leq0\text{.}%
\end{align*}
Furthermore, as a consequence of assumptions (\ref{ipo:gLip}) and
(\ref{ipo:g5}) and of (\ref{we giovane}), we get
\begin{align*}
\int_{0\leq u_{\varepsilon}\leq1/2}g^{1}(u_{\varepsilon})f_{\varepsilon
}^{\prime\prime}(u_{\varepsilon})f_{\varepsilon}^{\prime}(u_{\varepsilon})  &
\leq\int_{0\leq u_{\varepsilon}\leq1/2}\left(  g^{1}(u_{\varepsilon}%
)-g^{1}(0)\right)  f_{\varepsilon}^{\prime\prime}(u_{\varepsilon
})f_{\varepsilon}^{\prime}(u_{\varepsilon})\\
&  \leq\int_{0\leq u_{\varepsilon}\leq1/2}Lu_{\varepsilon}f_{\varepsilon
}^{\prime\prime}(u_{\varepsilon})f_{\varepsilon}^{\prime}(u_{\varepsilon})\\
&  \leq C\int_{0\leq u_{\varepsilon}\leq1/2}f_{\varepsilon}^{\prime
}(u_{\varepsilon})\leq C\left\Vert f_{\varepsilon}^{\prime}(u_{\varepsilon
})\right\Vert _{L^{2}(\Omega)}%
\end{align*}
and
\[
\int_{1/2\leq u_{\varepsilon}\leq1}g^{1}(u_{\varepsilon})f_{\varepsilon
}^{\prime\prime}(u_{\varepsilon})f_{\varepsilon}^{\prime}(u_{\varepsilon})\leq
C\left\Vert f_{\varepsilon}^{\prime}(u_{\varepsilon})\right\Vert
_{L^{2}(\Omega)}\text{.}%
\]
These inequalities yield%
\begin{equation}
\int_{\Omega}g^{1}(u_{\varepsilon})f_{\varepsilon}^{\prime\prime
}(u_{\varepsilon})f_{\varepsilon}^{\prime}(u_{\varepsilon})\leq C\left\Vert
f_{\varepsilon}^{\prime}(u_{\varepsilon})\right\Vert _{L^{2}(\Omega)}\text{.}
\label{step1.2}%
\end{equation}
Recalling (\ref{K3}) and (\ref{iop}) we obtain
\begin{align}
\int_{\Omega}\mu_{\varepsilon}\nabla v_{\varepsilon}f_{\varepsilon}%
^{\prime\prime}(u_{\varepsilon})\nabla(f_{\varepsilon}^{\prime}(u_{\varepsilon
}))  &  =\int_{\Omega}\mu_{\varepsilon}f_{\varepsilon}^{\prime\prime
}(u_{\varepsilon})\left\vert \nabla(f_{\varepsilon}^{\prime}(u_{\varepsilon
}))\right\vert ^{2}\label{step2}\\
&  \quad +\int_{\Omega}\mu_{\varepsilon}\nabla w_{\varepsilon}f_{\varepsilon
}^{\prime\prime}(u_{\varepsilon})\nabla(f_{\varepsilon}^{\prime}%
(u_{\varepsilon}))\nonumber\\
&  \geq1/2\int_{\Omega}\left\vert \nabla(f_{\varepsilon}^{\prime
}(u_{\varepsilon}))\right\vert ^{2}-C\text{.}\nonumber
\end{align}
We now observe that
\[
\mu_{\varepsilon}f_{\varepsilon}^{\prime\prime\prime}(u_{\varepsilon})\nabla
u_{\varepsilon}=\gamma_{\varepsilon}\nabla(f_{\varepsilon}^{\prime
}(u_{\varepsilon}))
\]
where
\[
\gamma_{\varepsilon}=f_{\varepsilon}^{\prime\prime\prime}(u_{\varepsilon}%
)\mu_{\varepsilon}^{2}\frac{1}{\mu_{\varepsilon}f_{\varepsilon}^{\prime\prime
}(u_{\varepsilon})}=f_{\varepsilon}^{\prime\prime\prime}(u_{\varepsilon}%
)\mu_{\varepsilon}^{2}\left(  1+o(\varepsilon)\right)  .
\]
It is not hard to show that $|\gamma_{\varepsilon}|\leq C$. Hence, by using
(\ref{linftybound}), (\ref{K3}) and the fact that $\gamma_{\varepsilon
}(s)f_{\varepsilon}^{\prime}(s)\geq0$ for every $s\in%
\mathbb{R}
$, we obtain%
\begin{align}
\int_{\Omega}\mu_{\varepsilon}\nabla v_{\varepsilon}f_{\varepsilon}%
^{\prime\prime\prime}(u_{\varepsilon})f_{\varepsilon}^{\prime}(u_{\varepsilon
})\nabla u_{\varepsilon}  &  =\int_{\Omega}\gamma_{\varepsilon}\left\vert
\nabla f_{\varepsilon}^{\prime}(u_{\varepsilon})\right\vert ^{2}%
f_{\varepsilon}^{\prime}(u_{\varepsilon})\label{step4}\\
& \quad +\int_{\Omega}\gamma_{\varepsilon}\nabla f_{\varepsilon}^{\prime
}(u_{\varepsilon})f_{\varepsilon}^{\prime}(u_{\varepsilon})\nabla
w_{\varepsilon}\nonumber\\
&  \geq1/4\int_{\Omega}\left\vert \nabla f_{\varepsilon}^{\prime
}(u_{\varepsilon})\right\vert ^{2}-C\int_{\Omega}\left\vert f_{\varepsilon
}^{\prime}(u_{\varepsilon})\right\vert ^{2}\left\vert \nabla w_{\varepsilon
}\right\vert ^{2}\nonumber\\
&  \geq1/4\int_{\Omega}\left\vert \nabla f_{\varepsilon}^{\prime
}(u_{\varepsilon})\right\vert ^{2}-C\int_{\Omega}\left\vert f_{\varepsilon
}^{\prime}(u_{\varepsilon})\right\vert ^{2}\text{.}\nonumber
\end{align}
Hence, combining (\ref{step1.2})-(\ref{step4}) together with (\ref{step1}), we
obtain
\[
\frac{1}{2}\frac{d}{dt}\left\Vert f_{\varepsilon}^{\prime}(u_{\varepsilon
})\right\Vert _{L^{2}(\Omega)}^{2}+\frac{1}{8}\int_{\Omega}\left\vert \nabla
f_{\varepsilon}^{\prime}(u_{\varepsilon})\right\vert ^{2}\leq C+C\left\Vert
f_{\varepsilon}^{\prime}\right\Vert _{L^{2}(\Omega)}^{2}\text{.}%
\]
Using Gronwall's Lemma, (\ref{u0h4}) and the fact that
\[
|f_{\varepsilon}^{\prime}(s)|\leq|f^{\prime}(s)|\text{, }\forall s\in
\lbrack0,1],\varepsilon>0\text{,}%
\]
we finally get%
\[
\left\Vert f_{\varepsilon}^{\prime}(u_{\varepsilon})\right\Vert _{L^{2}%
(0,T,H^{1}(\Omega))}\leq C\ \text{and }\left\Vert f_{\varepsilon}^{\prime
}(u_{\varepsilon})\right\Vert _{L^{\infty}(0,T,L^{2}(\Omega))}\leq C\text{.}%
\]
Thus, recalling $w_{\varepsilon}$ is bounded in $L^{\infty}(0,T,L^{2}%
(\Omega))\cap L^{2}(0,T,H^{1}(\Omega))$ independently of $\varepsilon$, we
obtain (\ref{regv}) and complete the proof of Theorem~\ref{teo v reg}.

\begin{remark}
Since $u_{\varepsilon}\rightarrow u~a.e.$ in $\Omega\times\lbrack0,T]$, thanks
to \cite[Theorem 8.3]{Ro},  $v_{\varepsilon}\rightarrow v=f^{\prime}(u)+w$
weakly in $L^{2}(0,T,H^{1}(\Omega))$. Hence $f^{\prime}(u)\in L^{2}(Q)$ and,
thus, $u\in(0,1)$ $a.e.$ in $\Omega\times\lbrack0,T]$. Furthermore, $u$ also
satisfies the weak formulation given by Definition \ref{def1} with
\[
\left\langle u_{t},\psi\right\rangle +(\mu(u)\nabla v,\nabla\psi
)=(g(u),\psi)\text{, \ \ }v=f^{\prime}(u)+w\text{,}%
\]
instead of (\ref{CHWEAK}).
\end{remark}

\section{Separation properties}
\label{Separazione}

This section is devoted to the study of separation from singularities of the
solution $u$ to (\ref{eq:CH})-(\ref{eq:ic}): we show that the solution of our
problem separates from the pure phases $0$ and $1$ after an arbitrary short
time $T_{0}$; more precisely we prove that for every $T_{0}\in(0,T)~$\ there
exists $k>0$ such that $k\leq u(x,t)\leq1-k$ for $a.a.~x\in\Omega,t\in\lbrack
T_{0},T]$. Moreover, if $u_{0}$ separates from $0$ and $1$ then $T_{0}=0$.

In \cite{LP2} Londen and Petzeltov\`{a} proved these results in case $g=0$.
Our proof follows the guide line of \cite[Theorem 2.1]{LP2}. The main
difference is due to the non-conservation of the quantity $\bar{u}(t)=\frac
{1}{\left\vert \Omega\right\vert }\int_{\Omega}u(x,t)$. We focus only on the
parts of the proof which differ from \cite[Theorem 2.1]{LP2}.

\begin{remark}
\label{ost}Since $0\leq u\leq1$, a necessary condition to $u$
being separated from $0$ and $1$ is 
\begin{equation}
0<\bar{u}(t)=\frac{1}{\left\vert \Omega\right\vert }\int_{\Omega
}u(x,t)dx<1~\forall t\in\lbrack0,T]\text{.} \label{urca}%
\end{equation}

\end{remark}

The following Lemmas \ref{lemma media1} and \ref{lemma media2} show that
$0<\bar{u}(t)<1~\forall t\in\lbrack0,T]$. Moreover they estimate the measure
of a level set of $u$ uniformly in $t$. These estimates will be used in
proving Theorem \ref{teo sep}.

\begin{lemma}
\label{lemma media1}Let $u$ be the weak solution to (\ref{eq:CH}%
)-(\ref{eq:ic}) in the sense of Definition \ref{def1}, let (\ref{u02}) and
(\ref{ipo:gLip}) be satisfied. Then, there exist $b_{0}>0$ and $c_{0}>0$ not
depending on $t$ such that,%
\begin{equation}
\left\vert \Omega_{1}^{t}\right\vert \geq c_{0}>0 \label{sep3}%
\end{equation}
where $\Omega_{1}^{t}=\{x\in\Omega:u(x,t)\geq b_{0}\}.$
\end{lemma}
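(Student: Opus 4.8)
The strategy is first to show that the spatial average $\bar u(t)$ stays in a fixed compact subinterval of $(0,1)$, uniformly in $t\in[0,T]$, and then to deduce the level-set estimate \eqref{sep3} by an elementary Chebyshev-type argument.

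\emph{Step 1: a differential inequality for $\bar u$.} Choosing $\psi\equiv 1$ in the weak formulation \eqref{CHWEAK} the diffusion and convolution terms drop out (their gradients vanish), so that
\[
\frac{d}{dt}\,\bar u(t)=\frac{1}{|\Omega|}\int_\Omega g(x,t,u(x,t))\,dx\qquad\text{for a.a.~}t\in[0,T],
\]
and by \eqref{ipo:Gi3} this derivative belongs to $L^\infty(0,T)$, so $\bar u$ is absolutely continuous on $[0,T]$. Now combine the Lipschitz bound \eqref{ipo:gLip} with the sign condition \eqref{ipo:g5}: for a.a.~$x,t$ and $0\le u\le1$,
\[
g(x,t,u)\ge g(x,t,0)-Lu\ge -Lu,\qquad
g(x,t,u)\le g(x,t,1)+L(1-u)\le L(1-u).
\]
Integrating over $\Omega$ yields $\frac{d}{dt}\bar u\ge -L\bar u$ and $\frac{d}{dt}(1-\bar u)\ge -L(1-\bar u)$, so by Gronwall's lemma and \eqref{u02},
\[
\bar u(t)\ge \bar u_0\,e^{-LT}=:m_0>0,\qquad 1-\bar u(t)\ge (1-\bar u_0)\,e^{-LT}=:m_1>0,
\]
for all $t\in[0,T]$. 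In particular $0<m_0\le\bar u(t)\le 1-m_1<1$ uniformly in $t$, which already proves \eqref{urca}.

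\emph{Step 2: from the average bound to the level set.} Fix $b_0:=m_0/2$ and write, using $0\le u\le 1$,
\[
m_0\,|\Omega|\le \int_\Omega u(x,t)\,dx
=\int_{\{u<b_0\}}u\,dx+\int_{\Omega_1^t}u\,dx
\le b_0\,|\Omega|+|\Omega_1^t|.
\]
Hence $|\Omega_1^t|\ge (m_0-b_0)|\Omega|=\tfrac{m_0}{2}|\Omega|=:c_0>0$, and both $b_0$ and $c_0$ depend only on $\bar u_0$, $L$, $T$ and $|\Omega|$, not on $t$. This is exactly \eqref{sep3}.

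\emph{Main obstacle.} There is no serious difficulty here; the only point requiring a little care is the rigorous justification of the identity for $\frac{d}{dt}\bar u$, i.e.\ that the constant function is an admissible test function and that $\bar u$ is absolutely continuous — this follows from $u\in H^1(0,T,H^1(\Omega),(H^1(\Omega))^\ast)$ together with \eqref{ipo:Gi3}. Everything else is a one-line Gronwall estimate and a Chebyshev inequality. (Note that the same computation, run on a subinterval $[t_0,T]$ with datum $u(t_0)$, would give the analogous bound with $e^{-L(T-t_0)}$, which is what is needed later; but for the present lemma the global constants above suffice.)
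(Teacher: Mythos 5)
Your proof is correct and follows essentially the same route as the paper: a lower bound $\bar u(t)\ge \bar u_0 e^{-Lt}$ obtained from \eqref{ipo:gLip} and \eqref{ipo:g5} via Gronwall, followed by a Chebyshev-type argument with $b_0$ equal to half that bound. The only (harmless) differences are that you run both Gronwall and the level-set step directly, where the paper phrases each as a contradiction argument starting from a first time at which $\bar u$ vanishes, and that you additionally record the upper bound $1-\bar u(t)\ge(1-\bar u_0)e^{-LT}$, which is not needed until Lemma \ref{lemma media2}.
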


\begin{proof}
Let us assume $\left\vert \Omega\right\vert =1$ for simplicity. We observe
$\bar{u}(t)-\bar{u}_{0}=\int_{0}^{t}\frac{d}{ds}\bar{u}(s)ds=\int_{0}%
^{t}\left\langle u^{\prime},1\right\rangle =\int_{0}^{t}(g,1)=\int_{0}^{t}%
\int_{\Omega}g$. Therefore
\begin{equation}
\bar{u}(t)=\bar{u}_{0}+\int_{0}^{t}\int_{\Omega}g(u)\text{.} \label{ops}%
\end{equation}
Thanks to (\ref{rego}) we have $u\in C(\left[  0,T\right]  ,L^{2}(\Omega))$.
Hence, the function $\bar{u}:t\in\lbrack0,T]\mapsto\bar{u}(t)\in\lbrack0,1]$ is
continuous. We first prove that there exists $c>0$ not depending on $t$ such
that%
\begin{equation}
\bar{u}(t)\geq c~\forall t\in\lbrack0,T]. \label{cc}%
\end{equation}
Suppose, by contradiction, that there exists $t^{\prime}\in(0,T]$ such that
$t^{\prime}=\min\{t\in\lbrack0,T]:\bar{u}(t)=0\}$. So $u(t^{\prime
})=0~a.a.~x\in\Omega$. Due to (\ref{ipo:g5}), $g(u(t^{\prime
}))\geq0$. As a consequence of (\ref{ipo:gLip}) \ there exists $L>0$ such that
$\left\vert g(x,t,s_{1})-g(x,t,s_{2})\right\vert \leq L\left\vert s_{1}%
-s_{2}\right\vert ~\forall(x,t,s_{i})\in\Omega\times\lbrack0,T]\times
\lbrack0,1], \, i\in\{1,2\}$. Hence, from (\ref{ops}), it follows
\begin{align*}
\bar{u}(t)  &  =\bar{u}_{0}+\int_{0}^{t}\int_{\Omega}g(u)\geq\bar{u}_{0}%
+\int_{0}^{t}\int_{\Omega}\left(  g(u(s))-g(u(t^{\prime}))\right)  ds\\
&  \geq\bar{u}_{0}-L\int_{0}^{t}\int_{\Omega}u(s)ds=\bar{u}_{0}-L\int_{0}%
^{t}\bar{u}(s)ds\quad\forall t\in [0,T]\text{.}%
\end{align*}
Then $\bar{u}(t)$ is bounded below by $\bar{u}(t)\geq\bar{u}_{0}\exp(-Lt)>0$
$\forall t\in\lbrack0,T]$. This contradicts $\bar{u}(t^{\prime})=0$. Hence,
(\ref{cc}) holds.

Set $b_{0}=\frac{1}{2}c$. Then%
\[
\left\vert \Omega_{1}^{t}\right\vert \geq\frac{1}{2}c=c_{0}>0\text{.}%
\]
Indeed, suppose, by contradiction, $\left\vert \Omega_{1}^{t}\right\vert
<\frac{1}{2}c$, then $c\leq\bar{u}(t)=\int_{\Omega_{1}^{t}}u+\int
_{\Omega\smallsetminus\Omega_{1}^{t}}u\leq\int_{\Omega_{1}^{t}}1+\int
_{\Omega\smallsetminus\Omega_{1}^{t}}\frac{c}{2}<\frac{c}{2}+\frac{c}%
{2}\left\vert \Omega\smallsetminus\Omega_{1}^{t}\right\vert $. Hence
$\left\vert \Omega\smallsetminus\Omega_{1}^{t}\right\vert >1$ which is a contradiction.
\end{proof}

\begin{lemma}
\label{lemma media2}Let $u$ be a weak solution to (\ref{eq:CH})-(\ref{eq:ic})
in the sense of Definition \ref{def1}, let (\ref{u02}) and (\ref{ipo:gLip}) be
satisfied. Then there exist $b_{0}>0$ and $c_{0}>0$ not depending on $t$ such
that%
\[
\left\vert \Omega_{2}^{t}\right\vert \geq c_{0}>0
\]
where $\Omega_{2}^{t}=\{x\in\Omega:u(x,t)\leq1-b_{0}\}$.
\end{lemma}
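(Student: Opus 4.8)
The plan is to reproduce the proof of Lemma \ref{lemma media1} with the two pure phases interchanged, the present statement being its exact reflection under the involution $u\leftrightarrow 1-u$. As there, one may assume $\left\vert\Omega\right\vert=1$; set $\tilde{u}:=1-u$, so that $0\leq\tilde{u}\leq1$, $\overline{\tilde{u}}(t)=1-\bar{u}(t)$, and $\overline{\tilde{u}}_{0}=1-\bar{u}_{0}\in(0,1)$ by (\ref{u02}), while $t\mapsto\overline{\tilde{u}}(t)$ is continuous on $[0,T]$ by (\ref{rego}).

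The key step will be to produce a constant $c>0$, independent of $t$, with $\overline{\tilde{u}}(t)\geq c$ on $[0,T]$. From (\ref{ops}) one has $\bar{u}(t)=\bar{u}_{0}+\int_{0}^{t}\int_{\Omega}g(u(s))\,ds$, hence $\overline{\tilde{u}}(t)=\overline{\tilde{u}}_{0}-\int_{0}^{t}\int_{\Omega}g(u(s))\,ds$; since $s\mapsto\int_{\Omega}g(u(s))$ is bounded by (\ref{ipo:Gi3}) (recall $0\leq u\leq1$ by (\ref{xxx})), $\overline{\tilde{u}}$ is Lipschitz with $\tfrac{d}{dt}\overline{\tilde{u}}(t)=-\int_{\Omega}g(u(t))$ for a.a.\ $t$. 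I would then use (\ref{ipo:gLip}) and (\ref{ipo:g5}) in the form $g(x,t,1)\leq0$ to obtain
\[
g(x,t,u(x,t))\leq g(x,t,1)+L\left\vert u(x,t)-1\right\vert\leq L\,(1-u(x,t))=L\,\tilde{u}(x,t)\qquad\text{for a.a.\ }(x,t),
\]
so that $\int_{\Omega}g(u(t))\leq L\,\overline{\tilde{u}}(t)$ and hence $\tfrac{d}{dt}\overline{\tilde{u}}(t)\geq-L\,\overline{\tilde{u}}(t)$ a.e.\ in $(0,T)$. Then $t\mapsto e^{Lt}\overline{\tilde{u}}(t)$ is non-decreasing, and $\overline{\tilde{u}}(t)\geq\overline{\tilde{u}}_{0}\exp(-Lt)\geq\overline{\tilde{u}}_{0}\exp(-LT)=:c>0$ for every $t\in[0,T]$. (One could equally argue by contradiction, as in Lemma \ref{lemma media1}, from the assumption that $\overline{\tilde{u}}(t')=0$ at a first time $t'$, i.e.\ $u(\cdot,t')=1$ a.e.)

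With this uniform bound, the measure estimate follows by the elementary level-set argument used at the end of Lemma \ref{lemma media1}. Put $b_{0}:=\tfrac{c}{2}$, so that $\Omega_{2}^{t}=\{x\in\Omega:u(x,t)\leq1-b_{0}\}=\{x\in\Omega:\tilde{u}(x,t)\geq b_{0}\}$; if $\left\vert\Omega_{2}^{t}\right\vert<\tfrac{c}{2}$, then
\[
c\leq\overline{\tilde{u}}(t)=\int_{\Omega_{2}^{t}}\tilde{u}+\int_{\Omega\setminus\Omega_{2}^{t}}\tilde{u}\leq\left\vert\Omega_{2}^{t}\right\vert+\frac{c}{2}\left\vert\Omega\setminus\Omega_{2}^{t}\right\vert<\frac{c}{2}+\frac{c}{2}=c,
\]
a contradiction, whence $\left\vert\Omega_{2}^{t}\right\vert\geq\tfrac{c}{2}=:c_{0}>0$ uniformly in $t$, which is the assertion. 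I expect the only delicate point to be the sign bookkeeping in the displayed estimate for $g$: it is (\ref{ipo:g5}), used as $g(x,t,1)\leq0$ (the mirror of $g(x,t,0)\geq0$ from Lemma \ref{lemma media1}), that makes the bound close; the rest is a routine reflection of the previous lemma, so no genuinely new obstacle should arise.
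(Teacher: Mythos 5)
Your proof is correct and is essentially the paper's own argument: the paper proves Lemma \ref{lemma media2} simply by declaring it ``analogous to the proof of Lemma \ref{lemma media1},'' and you have carried out exactly that analogy, substituting $\tilde{u}=1-u$, using $g(x,t,1)\leq 0$ from (\ref{ipo:g5}) together with the Lipschitz bound to get $\tfrac{d}{dt}\overline{\tilde{u}}\geq -L\,\overline{\tilde{u}}$, and finishing with the same level-set contradiction. The sign bookkeeping in your displayed estimate for $g$ is right, so nothing further is needed.
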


\begin{proof}
The proof is analogous to the proof of Lemma \ref{lemma media1}.
\end{proof}

The main result of this section is the following propositions.

\begin{proposition}
\label{separazione}Let assumption of Theorem \ref{teo sep} be satisfied. Then,
for every $T_{0}\in(0,T)$, there exists $k>0$ depending on $T_{0}$ and
$\bar{u}_{0}$ such that
\begin{equation}
k\leq u(x,t)\text{ for }a.a.\text{ }x\in\Omega\text{, }t\in\lbrack
T_{0},T]\,\text{.} \label{A}%
\end{equation}
Furthermore, if there exists $\tilde{k}>0$ such that
\begin{equation}
\tilde{k}\leq u_{0}~a.e.\text{ in }\Omega\text{,} \label{hp:u0bd}%
\end{equation}
then $T_{0}=0$.
\end{proposition}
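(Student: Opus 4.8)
The plan is to follow \cite[Theorem~2.1]{LP2}, the only structural novelty being that $\bar u(t)$ is no longer conserved, so that Lemma~\ref{lemma media1} (and, for the companion upper bound, Lemma~\ref{lemma media2}) must be fed into the estimates wherever \cite{LP2} would simply invoke $\bar u\equiv\bar u_0\in(0,1)$. The starting point is the regularity already at our disposal: the hypotheses of Theorem~\ref{teorego} hold and $d\le3$, so Theorem~\ref{teorego} together with Remark~\ref{superRemark} give $u\in L^\infty(0,T;H^2(\Omega))\cap C(\overline{Q})$; in particular $0<u<1$ a.e., and by (K3) the function $w=K*(1-2u)$ is bounded in $C([0,T];W^{1,\infty}(\Omega))$. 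Since $\mu f''\equiv1$, the leading part of the weak equation (\ref{CHWEAK}) is the heat operator $\partial_t-\Delta$, the nonlocal term $\mu(u)\nabla w$ contributing only through the bounded quantity $\nabla w$; the Neumann-type boundary condition is already built into (\ref{CHWEAK}), so no boundary terms arise when we test with functions of $u$.

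\textbf{Step 1: smoothing estimate.} I would test (\ref{CHWEAK}) with a truncated, then relaxed, version of $\psi=-1/u$, admissible by the truncation-and-monotone-limit device already used in the Alikakos argument for Theorem~\ref{teo v reg}. Writing $\beta(s):=-\ln s\ge0$ and using $\mu f''\equiv1$, the left-hand side produces $\frac{d}{dt}\int_\Omega\beta(u)+\int_\Omega|\nabla\beta(u)|^2$ up to a cross term $\pm\int_\Omega(1-u)\nabla w\cdot\nabla\beta(u)$ absorbed via Young's inequality and $\|\nabla w\|_{L^\infty}\le C$, while the reaction term gives $-\int_\Omega g(u)/u$; here (G1)--(G3) are used decisively, since $g(x,t,0)\ge0$ and $|g(x,t,u)-g(x,t,0)|\le Lu$ give $-g(u)/u\le L$, so that this term is $\le L|\Omega|$. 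Hence
\[
\frac{d}{dt}\int_\Omega\beta(u)\,dx+\tfrac12\int_\Omega|\nabla\beta(u)|^2\,dx\le C .
\]
By Lemma~\ref{lemma media1}, $|\Omega_1^t|\ge c_0>0$ with $0\le\beta(u)\le-\ln b_0$ on $\Omega_1^t$, so the generalized Poincar\'e inequality relative to $\Omega_1^t$ (constant depending only on $c_0$ and $\Omega$) bounds $\int_\Omega\beta(u)$ by $C\|\nabla\beta(u)\|_{L^2(\Omega)}+C$; the displayed inequality then becomes $y'\le C-cy^2$ for $y(t):=\int_\Omega\beta(u(t))$. A comparison argument forces $y(t)\le\Phi(t)$ with $\Phi$ finite on $[T_0,T]$ for every $T_0>0$ (and finite up to $t=0$ when $-\ln u_0\in L^1(\Omega)$), together with $\beta(u)\in L^2(T_0,T;H^1(\Omega))$; since $0\le(-f'(u))_+\le\beta(u)$, the same holds for $(-f'(u))_+$.

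\textbf{Step 2: iteration up to $L^\infty$.} I would then run a De Giorgi/Moser-type iteration on the super-level sets of $-f'(u)$. For a threshold $M$ and $p\ge2$, with $\theta:=((-f'(u))-M)_+$, one tests (\ref{CHWEAK}) with a truncated version of $\psi=-f''(u)\,\theta^{p-1}$, using $\mu f''\equiv1$, the boundedness of $s\mapsto s\,f''(s)$ near $s=0$, the identity $\mu f'''\nabla u=-(1-2u)\nabla f'(u)$, and the sign properties of $f',f'',f'''$ near $u=0$ — the same structural facts exploited in the proof of Theorem~\ref{teo v reg}. Crucially, on $\{-f'(u)>M\}$ (where $u$ is small) the test function is $\le0$ while $g(x,t,0)\ge0$, so the $g(0)$ part of $(g(u),\psi)$ has the favourable sign and is dropped, and $|g(u)-g(0)|\le Lu$ leaves only a lower-order contribution. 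This yields
\begin{align*}
&\tfrac1p\,\tfrac{d}{dt}\|\theta\|_{L^p(\Omega)}^p+\tfrac{c}{p}\,\|\nabla(\theta^{p/2})\|_{L^2(\Omega)}^2 \\
&\qquad\le C\,\|\theta\|_{L^p(\Omega)}^p+C_p\,\|\theta\|_{L^{p-2}(\Omega)}^{p-2},
\end{align*}
where the constant $C$ in front of the top-order norm comes solely from $\|\nabla w\|_{L^\infty}$ and is independent of $p$, and the lower-order term is handled by Gagliardo--Nirenberg together with the $L^1$-control of $(-f'(u))_+$ from Step~1. Iterating along $p_k\to\infty$ on time intervals $[\tau_k,T]$ with $\tau_k\uparrow T_0$ (so as to carry the smoothing of Step~1 through the iteration) gives $\|(-f'(u))_+\|_{L^\infty(\Omega\times(T_0,T))}\le C(T_0,\bar u_0)$. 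Since $(-f'(u))_+=\ln\bigl((1-u)/u\bigr)$ on $\{u<1/2\}$, this yields $u\ge\bigl(1+e^{C(T_0,\bar u_0)}\bigr)^{-1}=:k>0$ a.e.\ on $\Omega\times(T_0,T)$, which is (\ref{A}). If moreover $\tilde k\le u_0$ as in (\ref{hp:u0bd}), then $(-f'(u_0))_+\le\ln\bigl((1-\tilde k)/\tilde k\bigr)\in L^\infty(\Omega)$, so Steps~1--2 may be started at $t=0$ and the bound holds on all of $[0,T]$, i.e.\ $T_0=0$.

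The step I expect to be most delicate is the iteration: keeping every constant uniform in $p$ — in particular verifying that the nonlocal term $\mu(u)\nabla w$ never spoils the coercive gradient term and enters only through the fixed factor $\|\nabla w\|_{L^\infty}$ — and correctly threading the time-smoothing of Step~1 through the iteration, so that the final $L^\infty$ bound on $[T_0,T]$ requires no assumption on $u_0$ beyond $u_0\in H^2(\Omega)$. Rendering the singular test functions rigorous (truncation in $u$, followed by a monotone passage to the limit) is routine but has to be carried out at every step.
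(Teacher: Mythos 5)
Your proposal is correct in outline and coincides with the paper's argument through your Step 1: the paper likewise tests with $-1/u$, uses exactly your estimate $-g(u)/u\le-(g(u)-g(0))/u\le L$ coming from (\ref{ipo:gLip})--(\ref{ipo:g5}), and feeds the relative Poincar\'e inequality on $\Omega_1^t$ from Lemma \ref{lemma media1} into the resulting identity to obtain $\frac{d}{dt}M_1\le-C_1M_1^2+C_2$ for $M_1(t)=\|\ln u(\cdot,t)\|_{L^1(\Omega)}$, the Riccati comparison then giving a bound $m_1$ on $[T_0,T]$ independent of $M_1(0)$. Where you genuinely diverge is the passage from $L^1$ to $L^\infty$. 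The paper stays with the Londen--Petzeltov\`a scheme: it differentiates $M_r(t)=\|\ln u(\cdot,t)\|_{L^r(\Omega)}$ directly, shows via H\"older (namely $M_{r-1}\le|\Omega|^{1/(r(r-1))}M_r$) that the reaction term contributes a constant independent of $r$, arrives at $\frac{d}{dt}M_r\le-C_3r^{-2}M_r^2+C_4m_1M_r+C_5r$, and then imports \cite[Lemmas 3.1--3.3]{LP2} to get $\sup_{t\ge T_0}M_r\le B$ uniformly in $r$ before letting $r\to\infty$. You instead propose a De Giorgi/Alikakos truncation on super-level sets of $-f'(u)$ with test functions $-f''(u)\theta^{p-1}$, i.e.\ the same machinery the paper deploys for Theorem \ref{teo v reg} following \cite{BH2}, with the reaction term handled by the same sign-plus-Lipschitz splitting. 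Both routes close; the paper's buys the $L^r\to L^\infty$ bootstrap essentially by citation, while yours is more self-contained at the cost of tracking the $p$-dependence of every constant, which you rightly flag as the delicate point. One item you leave implicit and should state: when (\ref{hp:u0bd}) fails, $-\ln u_0$ need not lie in $L^1(\Omega)$ (indeed $u_0$ may vanish on a set of positive measure), so the differential inequality of Step 1 cannot be initialized at $t=0$ without justification; the paper resolves this by approximating $u_0$ with data satisfying (\ref{hp:u0bd}) and invoking the continuous-dependence estimate (\ref{dipContLInfty}), exploiting that $m_1$ and $B$ depend only on $\bar u_0$ and $T_0$ and not on pointwise values of $u_0$. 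Your truncation-and-monotone-limit device can achieve the same, but the argument needs to be made explicit rather than absorbed into ``routine.''
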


\begin{proof}
In order to prove Proposition \ref{separazione} we follow the guide line of
\cite[Theorem 2.1]{LP2} . We show only the parts of the proof which differ from
\cite[Theorem 2.1]{LP2}. It is enough to show that $\ln(u(\cdot,t))$ is
bounded in $L^{\infty}(\Omega)$ by a constant depending on $T_{0}$ and
$\bar{u}_{0}$ for every $t\in\lbrack T_{0},T]$.

We prove first Proposition~\ref{separazione} assuming (\ref{hp:u0bd}). Without
loss of generality, thanks to Remark~\ref{superRemark}, we may assume that
$0<u(t)$ $a.e.$ in $\Omega$ for every $t\in\lbrack0,T].$

Denote%
\[
M_{r}(t)=\left\Vert \ln(u(\cdot,t))\right\Vert _{L^{r}(\Omega)}\text{ for
}r\in%
\mathbb{N}
\text{.}%
\]
We first derive a differential inequality for $M_{r}(t)$. Setting $r=1$ and
using (\ref{xxx}) and (\ref{CHWEAK}) we get
\begin{align*}
\frac{d}{dt}M_{1}(t)  &  =\frac{d}{dt}\int_{\Omega}\left(  -\ln(u)\right)
=-\left\langle u^{\prime},\frac{1}{u}\right\rangle \\
&  =\int_{\Omega}\nabla\left(  \frac{1}{u}\right)  \left(  \nabla u+\mu\nabla
w\right)  -\int_{\Omega}\frac{1}{u}g(u)\text{.}%
\end{align*}
From $g(0)\geq0$ (see (\ref{ipo:g5})) and $g(x,t,s)$ Lipschitz continuous in $s$ (see
(\ref{ipo:gLip})) follows the estimate
\begin{equation}
-\frac{g(u)}{u}\leq-\frac{g(u)}{u}+\frac{g(0)}{u}=-\frac{g(u)-g(0)}{u}%
\leq\frac{Lu}{u}=L\text{,} \label{impo}%
\end{equation}
where $L$ denotes the Lipschitz constant of $g$. So%
\begin{equation}
-\int_{\Omega}\frac{1}{u}g(u)\leq C\text{.} \label{stimett}%
\end{equation}
Using%
\begin{equation}
\left\vert \nabla\ln(u)\right\vert ^{2}=-\nabla u\nabla\left(  \frac{1}%
{u}\right)  \text{,} \label{id:h1}%
\end{equation}%
\begin{equation}
\frac{\nabla\ln(u)}{u}=-\nabla\left(  \frac{1}{u}\right)  \label{id:h2}%
\end{equation}
and (\ref{def:mu}), (\ref{rego}), (\ref{xxx}), (\ref{K3}) we prove the
following estimate
\begin{align}
\frac{d}{dt}M_{1}(t)  &  \leq-\int_{\Omega}\left\vert \nabla\ln(u)\right\vert
^{2}-\int_{\Omega}\frac{\nabla\ln(u)}{u}\mu\nabla w+C\label{sepp2}\\
&  =-\int_{\Omega}\left\vert \nabla\ln(u)\right\vert ^{2}-\int_{\Omega
}(1-u)\nabla\ln(u)\nabla w+C\nonumber\\
&  \leq-\frac{1}{2}\int_{\Omega}\left\vert \nabla\ln(u)\right\vert ^{2}%
+C\int_{\Omega}\left\vert \nabla w\right\vert ^{2}+C\nonumber\\
&  \leq-\frac{1}{2}\int_{\Omega}\left\vert \nabla\ln(u)\right\vert ^{2}%
+C\int_{\Omega}\left\vert u\right\vert ^{2}+C\nonumber\\
&  \leq-\frac{1}{2}\int_{\Omega}\left\vert \nabla\ln(u)\right\vert
^{2}+C\text{.}\nonumber
\end{align}
Let $b_{0}$, $\Omega_{1}^{t}$ and $c_{0}$ be as in Lemma \ref{lemma media1}.
Using Poincar\`{e}'s inequality (\ref{dis:Poincare}) (cf.~Theorem~\ref{teo poinc}
in the Appendix), (\ref{xxx}) and (\ref{sep3}), we obtain
\begin{align}
\int_{\Omega}\left\vert \nabla\ln(u)\right\vert ^{2}  &  \geq C\left\vert
\Omega_{1}^{t}\right\vert ^{2}\int_{\Omega}\left\vert \ln(u)-\frac
{1}{\left\vert \Omega_{1}^{t}\right\vert }\int_{\Omega_{1}^{t}}\ln
(u)\right\vert ^{2}\label{sepp4}\\
&  \geq C\left\vert \Omega_{1}^{t}\right\vert ^{2}\left[  \int_{\Omega
}\left\vert \ln(u)\right\vert ^{2}+\int_{\Omega}\frac{1}{\left\vert \Omega
_{1}^{t}\right\vert ^{2}}\left(  \int_{\Omega_{1}^{t}}\ln(u)\right)
^{2}\right. \nonumber\\
&  \left.  -\frac{2}{\left\vert \Omega_{1}^{t}\right\vert }\int_{\Omega}%
\ln(u)\int_{\Omega_{1}^{t}}\ln(u)\right] \nonumber\\
&  \geq C\left\vert \Omega_{1}^{t}\right\vert ^{2}\left(  \int_{\Omega
}\left\vert \ln(u)\right\vert ^{2}-\frac{2}{\left\vert \Omega_{1}%
^{t}\right\vert }\int_{\Omega}\ln(u)\int_{\Omega_{1}^{t}}\ln(u)\right)
\nonumber\\
&  \geq Cc_{0}^{2}\left(  \left(  \int_{\Omega}\left\vert \ln(u)\right\vert
\right)  ^{2}+\frac{2}{c_{0}}\int_{\Omega}\left\vert \ln(u)\right\vert
\int_{\Omega_{1}^{t}}\ln(b_{0})\right) \nonumber\\
&  \geq C\left(  M_{1}(t)\right)  ^{2}-CM_{1}(t)\nonumber\\
&  \geq C\left(  M_{1}(t)\right)  ^{2}-C\text{.}\nonumber
\end{align}
Combining together (\ref{sepp2}) and (\ref{sepp4}), we get%
\[
\frac{d}{dt}M_{1}(t)\leq-C_{1}\left(  M_{1}(t)\right)  ^{2}+C_{2}\text{.}%
\]
Proceeding as in \cite[Lemma 3.1]{LP2}, it is possible to prove that for every
$T_{0}\in\lbrack0,T]$ there exists $m_{1}$ depending on $\bar{u}_{0}$ and
$T_{0}$ such that
\begin{equation}
M_{1}(t)\leq m_{1}~\forall t\in\lbrack T_{0},T]\text{.} \label{bd:M1}%
\end{equation}
We remark that $m_{1}$ does not depend on $M_{1}(0)$.
We now derive a differential inequality for $M_{r}$. Using (\ref{CHWEAK}) we
get%
\begin{align*}
\frac{d}{dt}M_{r}(t)  &  =\frac{d}{dt}\left(  \int_{\Omega}\left(
-\ln(u\right)  )^{r}\right)  ^{1/r}\\
&  =-\frac{1}{r}\left(  \int_{\Omega}\left(  -\ln(u\right)  )^{r}\right)
^{1/r-1}\int_{\Omega}r\left\langle u_{t},\frac{\left(  -\ln(u\right)  )^{r-1}%
}{u}\right\rangle \\
&  =\left(  M_{r}\right)  ^{1-r}\int_{\Omega}\nabla\left(  \frac{\left(
-\ln(u)\right)  ^{r-1}}{u}\right)  (\nabla u+\mu\nabla w)\\
&  -\left(  M_{r}\right)  ^{1-r}\int_{\Omega}g\left(  u\right)  \frac{\left(
-\ln(u)\right)  ^{r-1}}{u}\text{.}%
\end{align*}
We focus on the last term only. Using H\"{o}lder's inequality with H\"{o}lder
conjugates $\frac{r}{r-1}$ and $r$ we obtain
\begin{align}
M_{r-1}^{r-1}  &  =\int_{\Omega}(-\ln u)^{r-1}\leq\left(  \int_{\Omega
}1\right)  ^{\frac{1}{r}}\left(  \int_{\Omega}(-\ln u)^{\left(  r-1\right)
\frac{r}{r-1}}\right)  ^{\frac{r-1}{r}}\label{conto}\\
&  =\left\vert \Omega\right\vert ^{\frac{1}{r}}M_{r}^{r-1}\text{.}\nonumber
\end{align}
Hence%
\begin{equation}
M_{r-1}\leq\left\vert \Omega\right\vert ^{\frac{1}{r(r-1)}}M_{r}. \label{g}%
\end{equation}
So%
\begin{align}
-\left(  M_{r}\right)  ^{1-r}\int_{\Omega}g\left(  u\right)  \frac{\left(
-\ln(u)\right)  ^{r-1}}{u}  &  \leq C\left(  M_{r}\right)  ^{1-r}\int_{\Omega
}\left(  -\ln(u)\right)  ^{r-1}\label{stimetta}\\
&  =C\left(  M_{r}\right)  ^{1-r}\left(  M_{r-1}\right)  ^{r-1}\nonumber\\
&  \leq\left\vert \Omega\right\vert ^{\frac{1}{r}}C\left(  M_{r}\right)
^{1-r}\left(  M_{r}\right)  ^{r-1}\leq C\text{,}\nonumber
\end{align}
where $C$ does not depend on $r$. Using (\ref{stimetta}) and proceeding as in
\cite[Lemma 3.1]{LP2}, it is possible to prove the differential inequality%
\[
\frac{d}{dt}M_{r}\leq-C_{3}\frac{1}{r^{2}}\left(  M_{r}\right)  ^{2}%
+C_{4}m_{1}M_{r}+C_{5}r\text{,}%
\]
where $m_{1}$ as in (\ref{bd:M1}) and, hence, the following inequality for
every $\bar{T}\in(0,T]$
\begin{equation}
\sup_{t\geq\bar{T}}M_{r}(t)\leq B_{1}(\bar{T})r^{3}~~\forall r\in
\lbrack1,+\infty), \label{treotto}%
\end{equation}
where $B_{1}(\bar{T})$ is decreasing on $\left(  0,+\infty\right)  $ and such
that $\bar{T}B_{1}(\bar{T})$ is increasing for $\bar{T}$ large enough.
Furthermore $B_{1}(\bar{T})$ does not depend on the initial $M_{r}(0)$ and on
$r$. Proceeding as in \cite[Lemma 3.2 and Lemma 3.3]{LP2},  we can show that
\[
\forall T_{0}>0\text{ }\exists B>0\text{ such that }M_{r}(t)\leq B\text{
}\forall t\geq T_{0}\text{,}%
\]
where $B$ depends on $T_{0}$ and $\bar{u}_{0}$, but not on pointwise values of
$u_{0}$. Passing to the limit $r\rightarrow\infty$ we obtain
\begin{equation}
\left\Vert \ln(u(\cdot,t))\right\Vert _{L^{\infty}(\Omega)}\leq B~\forall
t\in\lbrack T_{0},T] \label{pluto}%
\end{equation}
and so (\ref{A}).

The Proposition \ref{separazione} is proved when (\ref{hp:u0bd}) holds. If
(\ref{hp:u0bd}) is not satisfied, we prove Proposition \ref{separazione} by
approximation: we approximate $u_{0}$ with $u_{0}^{n}$ satisfying
(\ref{hp:u0bd}) and employ the continuous dependence (see Remark
\ref{superRemark}) of solutions to get (\ref{pluto}) even for $u_{0}$ which
does not satisfy (\ref{hp:u0bd})\ (see \cite{LP2}).
\end{proof}

\begin{proposition}
\label{separazione1}Let assumption of Theorem \ref{teorego} be satisfied. Then
for every $T_{0}\in(0,T)$ there exists $k>0$ depending on $T_{0}$ and $\bar
{u}_{0}$ such that
\begin{equation}
u(x,t)\leq1-k\text{ for }a.a.\text{ }x\in\Omega\text{ and }t\in\lbrack
T_{0},T]\,\text{.} \label{B}%
\end{equation}
Furthermore, if there exists $\tilde{k}$ such that%
\begin{equation}
u_{0}\leq1-\tilde{k} \label{hp:u0bd2}%
\end{equation}
then $T_{0}=0$.
\end{proposition}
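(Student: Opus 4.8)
The plan is to obtain Proposition~\ref{separazione1} from Proposition~\ref{separazione} by exploiting the symmetry of the problem under the reflection $u\mapsto 1-u$. Set $\tilde u:=1-u$, $\tilde u_0:=1-u_0$, $\tilde w:=K\ast(1-2\tilde u)$ and define the reaction term $\tilde g(x,t,s):=-g(x,t,1-s)$. Since $\mu(s)=s(1-s)$ satisfies $\mu(1-s)=\mu(s)$ and $K$ is even, one checks at once that $\tilde w=-w$, $\nabla\tilde w=-\nabla w$, $\mu(\tilde u)=\mu(u)$ and $\nabla\tilde u=-\nabla u$. Moreover $\tilde g$ is continuous, Lipschitz in $s$ with the same constant $L$ as in (\ref{ipo:gLip}), satisfies (\ref{ipo:gReg3}), and, thanks to (\ref{ipo:g5}), obeys $\tilde g(x,t,0)=-g(x,t,1)\ge 0\ge -g(x,t,0)=\tilde g(x,t,1)$; hence $\tilde g$ fulfils (\ref{G2})--(\ref{ipo:g5}) and (\ref{ipo:gReg3}). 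Finally $\tilde u_0\in H^2(\Omega)$ iff $u_0\in H^2(\Omega)$, and the compatibility condition (\ref{u0h3}) is invariant under the reflection, since $n\cdot(\nabla u_0+\mu(u_0)\nabla K\ast(1-2u_0))=-\,n\cdot(\nabla\tilde u_0+\mu(\tilde u_0)\nabla K\ast(1-2\tilde u_0))$. Thus $\tilde u_0$ satisfies (\ref{u0h2})--(\ref{u0h3}) whenever $u_0$ does, and $0<\overline{\tilde u}_0=1-\bar u_0<1$.

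Next I would verify that $\tilde u$ is the weak solution, in the sense of Definition~\ref{def1}, to (\ref{eq:CH})--(\ref{eq:ic}) associated with the data $(\tilde u_0,K,\tilde g)$. Indeed $\tilde u$ has the required regularity (\ref{rego}), $0\le\tilde u\le1$ a.e. in $Q$, $\tilde w=K\ast(1-2\tilde u)\in C([0,T],W^{1,\infty}(\Omega))$ and $\tilde u(0)=\tilde u_0$ in $L^2(\Omega)$. Writing (\ref{CHWEAK}) for $u$, substituting $u=1-\tilde u$ and $w=-\tilde w$, and multiplying by $-1$ gives, for a.a. $t\in(0,T)$ and every $\psi\in H^1(\Omega)$,
\[
\langle\tilde u_t,\psi\rangle+(\mu(\tilde u)\nabla\tilde w,\nabla\psi)+(\nabla\tilde u,\nabla\psi)=-(g(1-\tilde u),\psi)=(\tilde g(\tilde u),\psi),
\]
which is precisely (\ref{CHWEAK}) for $\tilde u$ with reaction $\tilde g$. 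By the uniqueness part of Theorem~\ref{Teorema principale}, $\tilde u$ is the weak solution corresponding to $(\tilde u_0,K,\tilde g)$, and since $\tilde g$ and $\tilde u_0$ satisfy the hypotheses of Theorem~\ref{teorego} (with $d\le 3$), Theorem~\ref{teorego} and Remark~\ref{superRemark} apply to $\tilde u$ just as to $u$.

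It then remains to invoke Proposition~\ref{separazione} for $\tilde u$: for every $T_0\in(0,T)$ there exists $k>0$, depending on $T_0$ and $\overline{\tilde u}_0=1-\bar u_0$, such that $k\le\tilde u(x,t)$ for a.a. $x\in\Omega$ and $t\in[T_0,T]$, that is $u(x,t)\le 1-k$, which is (\ref{B}). If in addition (\ref{hp:u0bd2}) holds, then $\tilde k\le\tilde u_0$ a.e. in $\Omega$, so the second assertion of Proposition~\ref{separazione} applied to $\tilde u$ yields $T_0=0$.

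The argument is essentially bookkeeping; the only points that require a little care are checking that the sign conditions (\ref{ipo:g5}) transform correctly into the corresponding conditions for $\tilde g$ --- the inequalities are swapped at the two endpoints, which is exactly what is needed --- and that the Neumann-type compatibility condition (\ref{u0h3}) is unchanged up to an overall sign, so no genuine obstacle is expected. Alternatively, one could repeat the $M_r$-iteration of the proof of Proposition~\ref{separazione} verbatim with $-\ln(u)$ replaced by $-\ln(1-u)$ and Lemma~\ref{lemma media1} replaced by Lemma~\ref{lemma media2} (here $-g(u)/(1-u)\le L$ follows from $g(x,t,1)\le 0$ and the Lipschitz bound, playing the role of (\ref{impo})); the reflection reduction above makes this repetition unnecessary.
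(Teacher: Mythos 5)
Your proof is correct and follows exactly the paper's approach: the paper proves Proposition~\ref{separazione1} by the one-line remark that it follows from Proposition~\ref{separazione} with $U=1-u$, and your argument simply carries out in detail the verification (reflected reaction term $\tilde g(x,t,s)=-g(x,t,1-s)$, sign conditions, compatibility of the data) that this reduction requires.
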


\begin{proof}
We obtain Proposition \ref{separazione1} from Proposition \ref{separazione}
with $U=1-u$.
\end{proof}

Combining Proposition \ref{separazione} and Proposition \ref{separazione1} we
conclude the proof of Theorem~\ref{teo sep}.

\section{Remarks and generalizations\label{comm}}

\begin{remark}
If the solution to (\ref{eq:CH})-(\ref{eq:ic}) is defined on $[0,+\infty)$
Londen and Petzeltov\`{a} proved in \cite{LP2} that (under the assumptions of
Theorem \ref{teo sep} with $g=0$) $u$ separates from $0$ and $1$ (after
$T_{0}>0$) uniformly in time, i.e. for every $T_{0}>0$ there exists $k>0$ such
that for every $t>T_{0}$ $k\leq u(t)\leq1-k$. We remark that, if $g\neq0$, the
separation properties are not uniform in time even if $g$ satisfies
assumptions of Theorem \ref{teo sep}. Indeed, set $g(u)=-u$. Assumptions
(\ref{G2}), (\ref{ipo:gLip}), (\ref{ipo:g5}) are satisfied. So, for every
$T>0$, there exists an unique $u$ solution to (\ref{eq:CH})-(\ref{eq:ic})
definite over the whole set $[0,T]$. We have already noticed that $\bar{u}%
_{t}=\int_{\Omega}g(u)=-\int_{\Omega}u=-\bar{u}$. So, we have $\bar{u}%
(t)=\bar{u}_{0}\exp(-t)$ and
\begin{equation}
\bar{u}(t)\rightarrow0\text{ for }t\rightarrow+\infty.
\end{equation}
Hence, it is not possible to estimate $k\leq u(t)$ for every $t>T_{0}$ with
$k>0$ not depending on $t$.
\end{remark}

\begin{remark}
It is not hard to prove that our theorems can be obtained also for functions
$g$ that satisfy
\begin{align*}
&  g(x,t,s)\text{ is continuous with respect to }t\text{ and }s \text{ and measurable with respect to }x
\end{align*}
and%
\begin{align*}
&\exists\, C    >0\text{ such that }\left\vert g(x,t,s)\right\vert \leq C\quad \forall t,s    \in\lbrack0,T]\times\lbrack0,1]\text{ and for }a.a.~x\in
\Omega\text{,}%
\end{align*}
instead of (\ref{G2}). Indeed, continuity with respect to $x$ is used only to
ensure (\ref{ipo:Gi3}).
\end{remark}

\begin{remark}
We now remark that assumption (\ref{ipo:g5}) is natural. To the best of the
authors' knowledge, our assumptions on $g$ are satisfied in every work in
which Cahn-Hilliard equation with reaction is studied (see, e. g., \cite{KS},
\cite{BEG}, \cite{BO} or \cite{DP}). Furthermore, suppose that there exists
$c<0$ such that $g(x,t,s)\leq c<0$ for $a.a.$ $(x,t,s)\in\Omega\times
\lbrack0,T]\times\lbrack0,1]$, then it is possible to prove that doesn't exist
$u$ solution to (\ref{eq:CH})-(\ref{eq:ic}) on $[0,T]$ for $T$ large enough.
Indeed, suppose, by contradiction that such a $u$ exists. Then $\bar{u}%
_{t}=\int_{\Omega}g(u)<c|\Omega|<0$, so $\bar{u}(t)\leq\bar{u}_{0}+c|\Omega
|t$. Hence, $\bar{u}(t)<0$ if $t$ is large enough. Furthermore it is possible
to show that such a $t$ can be chosen arbitrary small (if $\bar{u}_{0}$ is
small enough). This argument doesn't prove that our assumptions are sharp, but
shows that they can be considered natural.
\end{remark}

\begin{remark}
Theorem \ref{Teorema principale} can be also extended to the nonlocal
convective Cahn-Hilliard equation with convection%
\[
u_{t}+V\cdot\nabla u+\nabla\cdot J=g(u)
\]
where $V$ denotes the flow speed and $J$ as in (\ref{jei}) (see \cite[Section~6]{FGR}).
\end{remark}

\section{Appendix\label{appendix}}

\subsection{Examples of convolution kernels \label{convoluzione}}

In this Section we provide examples of convolution kernels satisfying
assumptions (\ref{K1})-(\ref{K4}). We prove that
\[
K_{1}(x)=C\exp(-|x|^{2}/\lambda)\text{,}%
\]%
\[
K_{2}(x)=\left\{
\begin{array}
[c]{ll}%
C\exp(\frac{-h^{2}}{h^{2}-|x|^{2}}) & \text{ se }|x|<h\\
0 & \text{ se }|x|\geq h
\end{array}
\right.
\]
and%
\[
\left\{
\begin{array}
[c]{lll}%
K_{3}(\left\vert x\right\vert )=k_{d}\left\vert x\right\vert ^{2-d} &
\text{per } & d>2\\
K_{3}(\left\vert x\right\vert )=-k_{2}\ln\left\vert x\right\vert  & \text{per}
& d=2
\end{array}
\right.  \text{,}%
\]
where $h,\lambda,k_{d}>0$, satisfy (\ref{K1})-(\ref{K4}).

We start considering $K_{1}$ and $K_{2}$. They satisfy (\ref{K1}) trivially.
It is not hard to show that $K_{1}$ and $K_{2}$ are $C^{\infty}\left(
\mathbb{R}
^{d}\right)  \cap W^{2,p}\left(
\mathbb{R}
^{d}\right)  $ for every $1\leq p\leq\infty$. As consequence we have the
estimates (for $i=1,2$)
\[
\int_{\Omega}|K_{i}(x-y)|dy\leq\int_{%
\mathbb{R}
^{d}}|K_{i}(x-y)|dy\leq\int_{%
\mathbb{R}
^{d}}|K_{i}(y)|dy=\left\Vert K_{i}\right\Vert _{L^{1}(%
\mathbb{R}
^{d})}\text{,}%
\]
which yields (\ref{K2}). Set $\rho\in W^{1,p}(\Omega)$, $1\leq p\leq\infty$.
Then
\begin{align}
\int_{\Omega}|K_{i}\ast\rho|^{p}  &  =\int_{\Omega}|\int_{\Omega}%
K_{i}(x-y)\rho(y)dy|^{p}dx=\label{centoquarantasei}\\
&  \leq\int_{\Omega}|\left(  \int_{\Omega}|K_{i}(x-y)|^{\frac{p}{p-1}%
}dy\right)  ^{\frac{p-1}{p}}\left(  \int_{\Omega}|\rho(y)|^{p}dy\right)
^{\frac{1}{p}}|^{p}dx\nonumber\\
&  \leq\left\Vert \rho\right\Vert _{L^{p}(\Omega)}^{p}\int_{\Omega}\left\Vert
K_{i}\right\Vert _{L^{\frac{p}{p-1}}(%
\mathbb{R}
^{d})}^{p-1}dx\nonumber\\
&  =\left\Vert \rho\right\Vert _{L^{p}(\Omega)}^{p}\left\Vert K_{i}\right\Vert
_{L^{\frac{p}{p-1}}(%
\mathbb{R}
^{d})}^{p-1}|\Omega|~\leq C\left\Vert \rho\right\Vert _{L^{p}(\Omega)}%
^{p}\text{,}\nonumber
\end{align}
where $C$ is a positive constant depending on $p$. Since $K_{1}$ and $\ K_{2}$
are $C^{\infty}\left(
\mathbb{R}
^{d}\right)  \cap W^{2,p}\left(
\mathbb{R}
^{d}\right)  $ we have
\begin{align*}
\partial_{j}(K_{i}\ast\rho)  &  =\partial_{j}K_{i}\ast\rho\text{ and}\\
\partial_{jl}(K_{i}\ast\rho)  &  =\partial_{jl}K_{i}\ast\rho\text{ }\forall
j,l\in\{1,...,d\}~i=1,2\text{.}%
\end{align*}
Proceeding as above we get
\begin{align}
\int_{\Omega}|\partial_{j}(K_{i}\ast\rho)|^{p}  &  =\int_{\Omega}|\partial
_{j}K_{i}\ast\rho|^{p}\leq\left\Vert \rho\right\Vert _{L^{p}(\Omega)}%
^{p}\left\Vert \partial_{j}K_{i}\right\Vert _{L^{\frac{p}{p-1}}(%
\mathbb{R}
^{d})}^{p-1}|\Omega|\label{cento47}\\
&  =C\left\Vert \rho\right\Vert _{L^{p}(\Omega)}^{p}~\forall j\in
\{1,...,d\}\nonumber
\end{align}
and%
\begin{align}
\int_{\Omega}|\partial_{jl}(K_{i}\ast\rho)|^{p}  &  =\int_{\Omega}%
|\partial_{jl}K_{i}\ast\rho|^{p}\leq\left\Vert \rho\right\Vert _{L^{p}%
(\Omega)}^{p}\left\Vert \partial_{jl}K_{i}\right\Vert _{L^{\frac{p}{p-1}}(%
\mathbb{R}
^{d})}^{p-1}|\Omega|\label{cento48}\\
&  =C\left\Vert \rho\right\Vert _{L^{p}(\Omega)}^{p}~\forall j\in
\{1,...,d\}\text{,}\nonumber
\end{align}
where $C>0$ depends on $p$. From estimates (\ref{centoquarantasei}) and
(\ref{cento47}) follows (\ref{K3}), and, from (\ref{cento48}), follows
(\ref{K4}).

We now prove that $K_{3}$ satisfies (\ref{K1})-(\ref{K4}). (\ref{K1}) holds
trivially. Property (\ref{K2}) holds thanks to
\begin{align}
\int_{\Omega}\ln|x-y|dy  &  \leq\int_{B_{1}(x)}\ln|x-y|dy+\int_{\Omega
\smallsetminus B_{1}(x)}\ln|x-y|dy\label{rino}\\
&  \leq C+|\Omega|\ln(\max\{diam(\Omega),1\})\text{ for }d=2\nonumber\\
\int_{\Omega}|x-y|^{2-d}dy  &  \leq\int_{B_{1}(x)}|x-y|^{2-d}dy+\int
_{\Omega\smallsetminus B_{1}(x)}|x-y|^{2-d}dy\nonumber\\
&  \leq C+|\Omega|\text{ for }d>2\text{.}\nonumber
\end{align}
In order to prove (\ref{K3}) and (\ref{K4}) we proceed as follows. Since
$\Omega$ is bounded, there exists $R>0$ such that $B_{R}\supseteq\Omega$,
where $B_{R}=\{x\in%
\mathbb{R}
^{d}:|x|<R\}$. Let $A\subset%
\mathbb{R}
^{d}$ be a measurable set and denote
\[
I_{A}(x)=\left\{
\begin{array}
[c]{ll}%
1 & \text{ for }x\in A\\
0 & \text{for }x\notin A
\end{array}
\right.  \text{.}%
\]
We have
\begin{align*}
\left\Vert K_{3}\ast\rho\right\Vert _{L^{p}(\Omega)}^{p}  &  =\int_{\Omega
}\left\vert \int_{\Omega}K_{3}(x-y)\rho(y)dy\right\vert ^{p}dx\\
&  \leq\int_{%
\mathbb{R}
^{d}}I_{B_{R}}(x)\left\vert \int_{%
\mathbb{R}
^{d}}K_{3}(x-y)\rho(y)I_{\Omega}(y)dy\right\vert ^{p}dx\\
&  \leq\int_{%
\mathbb{R}
^{d}}\left\vert \int_{%
\mathbb{R}
^{d}}I_{B_{2R}}(x-y)K_{3}(x-y)\rho(y)I_{\Omega}(y)dy\right\vert ^{p}dx\\
&  =\left\Vert K_{3}^{\prime}\ast\rho^{\prime}\right\Vert _{L^{p}(%
\mathbb{R}
^{d})}^{p}\text{,}%
\end{align*}
where $K_{3}^{\prime}=K_{3}\cdot I_{B_{2R}}$ and $\rho^{\prime}=\rho\cdot
I_{\Omega}$. Using Young's inequality we get, for every $p\in\lbrack
1,+\infty]$,
\begin{equation}
\left\Vert K_{3}^{\prime}\ast\rho^{\prime}\right\Vert _{L^{p}(%
\mathbb{R}
^{d})}\leq\left\Vert K_{3}^{\prime}\right\Vert _{L^{1}(%
\mathbb{R}
^{d})}\left\Vert \rho^{\prime}\right\Vert _{L^{p}(%
\mathbb{R}
^{d})}\text{. } \label{puc}%
\end{equation}
Proceeding as in (\ref{rino}) we obtain
\[
\left\Vert K_{3}^{\prime}\right\Vert _{L^{1}(%
\mathbb{R}
^{d})}=\int_{B_{2R}}\left\vert K_{3}(y)\right\vert dy\leq C\text{.}%
\]
Hence, from (\ref{puc}), we have
\begin{equation}
\left\Vert K_{3}\ast\rho\right\Vert _{L^{p}(\Omega)}\leq C\left\Vert
\rho^{\prime}\right\Vert _{L^{p}(%
\mathbb{R}
^{d})}=C\left\Vert \rho\right\Vert _{L^{p}(\Omega)}\text{.} \label{yuy}%
\end{equation}
Similar computations show%
\begin{equation}
\left\Vert \nabla K_{3}\ast\rho\right\Vert _{L^{p}(\Omega)}=\left\Vert \nabla
K_{3}\right\Vert _{L^{1}(B_{R})}\left\Vert \rho\right\Vert _{L^{p}(\Omega)}.
\label{yuyuy}%
\end{equation}
We remark that, for every $d\geq2$, $\left\vert \nabla K_{3}(x)\right\vert
\leq C_{d}|x|^{1-d}$ where $C_{d}$ denotes a positive constant depending on
$d$. Hence, we get
\begin{align*}
\int_{B_{R}}\left\vert \nabla K_{3}(x-y)\right\vert dy  &  \leq\int_{B_{1}%
(x)}\left\vert \nabla K_{3}(x-y)\right\vert dy+\int_{B_{R}\smallsetminus
B_{1}(x)}\left\vert \nabla K_{3}(x-y)\right\vert dy\\
&  \leq C\int_{B_{1}(x)}|x-y|^{1-d}dy+C\leq C\text{.}%
\end{align*}
So, (\ref{yuy}) and (\ref{yuyuy}) yield%
\begin{align}
\left\Vert K_{3}\ast\rho\right\Vert _{W^{1,p}(\Omega)}  &  \leq C\left(
\left\Vert \nabla K_{3}\ast\rho\right\Vert _{L^{p}(\Omega)}+\left\Vert
K_{3}\ast\rho\right\Vert _{L^{p}(\Omega)}\right) \label{kok}\\
&  \leq C\left\Vert \rho\right\Vert _{L^{p}(\Omega)}\text{.}\nonumber
\end{align}
This proves (\ref{K3}). Property (\ref{K4}) is proved thanks to (\ref{kok}) and
\cite[Theorem~9.9]{GT}.

\subsection{Auxiliary theorems}

\begin{theorem}
\label{Robinson}Let $V\subseteq H\subseteq V^{\ast}$ be an Hilbert tern. Let
$\{u_{n}\}$ be a sequence such that $u_{n}:[0,T]\rightarrow V$ and
\[
\left\Vert u_{n}\right\Vert _{L^{2}(0,T,V)}\leq C\text{, }\left\Vert
u_{n}^{\prime}\right\Vert _{L^{p}(0,T,V^{\ast})}\leq C
\]
where $p>1$ and $C>0$ not depending on $n$. Then there exists a subsequence
$\{u_{n_{k}}\}$ such that
\[
u_{n_{k}}\rightarrow u\text{ in }L^{2}(0,T,H)\text{.}%
\]

\end{theorem}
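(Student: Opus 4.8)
The plan is to run the classical Aubin--Lions argument. Two ingredients are used: the embedding $V\hookrightarrow H$ is compact (this is part of the Gelfand--triple setting relevant here, e.g.\ $V=H^1(\Omega)$, $H=L^2(\Omega)$ with $\Omega$ bounded, by Rellich--Kondrachov), and the Ehrling/Lions interpolation lemma: for every $\eta>0$ there is $C_\eta>0$ such that $\|v\|_H\le\eta\|v\|_V+C_\eta\|v\|_{V^*}$ for all $v\in V$. Note also that $V\hookrightarrow H\hookrightarrow V^*$ continuously, hence $\|v\|_{V^*}\le c\,\|v\|_V$.

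First, by reflexivity of $L^2(0,T,V)$ and $L^p(0,T,V^*)$ I would extract a subsequence (not relabelled) with $u_n\rightharpoonup u$ weakly in $L^2(0,T,V)$ and $u_n'\rightharpoonup u'$ weakly in $L^p(0,T,V^*)$, and set $w_n=u_n-u$, so that $w_n\rightharpoonup0$ in $L^2(0,T,V)$, $w_n'\rightharpoonup0$ in $L^p(0,T,V^*)$, with both norms bounded by a constant $C$ independent of $n$. Squaring the Ehrling inequality and integrating in time,
\[
\|w_n\|_{L^2(0,T,H)}^2\le 2\eta^2\|w_n\|_{L^2(0,T,V)}^2+2C_\eta^2\|w_n\|_{L^2(0,T,V^*)}^2\le 2\eta^2C^2+2C_\eta^2\|w_n\|_{L^2(0,T,V^*)}^2 ,
\]
so it suffices to prove that $w_n\to0$ strongly in $L^2(0,T,V^*)$: then letting $n\to\infty$ and afterwards $\eta\to0$ gives $w_n\to0$ in $L^2(0,T,H)$, which is the claim.

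To prove $w_n\to0$ in $L^2(0,T,V^*)$ I would argue pointwise in $t$. From $\|w_n'\|_{L^p(0,T,V^*)}\le C$ and H\"older's inequality, $\|w_n(t)-w_n(s)\|_{V^*}\le C|t-s|^{1-1/p}$ for all $s,t\in[0,T]$, so the $w_n$ are equi-H\"older and, using $\|w_n\|_{L^2(0,T,V^*)}\le c\,\|w_n\|_{L^2(0,T,V)}\le C$ to pin a value, equibounded in $C([0,T],V^*)$. For fixed $t\in(0,T]$ and $0<h\le t$ set $a_{n,h}(t)=\tfrac1h\int_{t-h}^t w_n(s)\,ds$. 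Then $\|w_n(t)-a_{n,h}(t)\|_{V^*}\le\tfrac1h\int_{t-h}^t\|w_n(t)-w_n(s)\|_{V^*}\,ds\le C h^{1-1/p}$, while $\|a_{n,h}(t)\|_V\le h^{-1/2}\|w_n\|_{L^2(0,T,V)}\le Ch^{-1/2}$ and, for fixed $h$, $a_{n,h}(t)\rightharpoonup0$ weakly in $V$ as $n\to\infty$ (test against $\varphi\in V^*$ and use $w_n\rightharpoonup0$ in $L^2(0,T,V)$ with $\chi_{[t-h,t]}\varphi\in L^2(0,T,V^*)$). Since $V\hookrightarrow\hookrightarrow H\hookrightarrow V^*$, a bounded sequence in $V$ with weak limit $0$ converges to $0$ strongly in $V^*$; hence $\limsup_n\|w_n(t)\|_{V^*}\le Ch^{1-1/p}$ for every admissible $h$, and letting $h\to0$ (using a forward average near $t=0$) gives $w_n(t)\to0$ in $V^*$ for every $t\in[0,T]$. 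Together with the uniform bound in $C([0,T],V^*)$, dominated convergence yields $\|w_n\|_{L^2(0,T,V^*)}\to0$, which closes the argument. \rule{0.5em}{0.5em}

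\medskip

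The main obstacle is the last paragraph: converting weak convergence in $L^2(0,T,V)$ together with the bound on time-derivatives into genuine strong, pointwise-in-time $V^*$-compactness. The local time-averaging estimate combined with the compactness of $V\hookrightarrow H$ is the device that makes this work, and without that compactness the statement is false; the weak extraction and the Ehrling interpolation are routine. (Alternatively, the theorem is a special case of Simon's compactness criterion and could simply be cited.)
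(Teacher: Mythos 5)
Your proof is correct, but there is nothing in the paper to compare it against line by line: the paper's entire ``proof'' of Theorem \ref{Robinson} is the citation ``This Theorem is proved in \cite{Ro}, Theorem 8.1.'' What you have written is a complete, self-contained derivation of that cited compactness result, following the standard Aubin--Lions--Simon scheme: weak extraction by reflexivity, reduction via the Ehrling interpolation inequality to strong convergence in $L^{2}(0,T,V^{*})$, and then the pointwise-in-time argument combining the equi-H\"older bound $\Vert w_{n}(t)-w_{n}(s)\Vert_{V^{*}}\le C|t-s|^{1-1/p}$ with the Steklov averages $a_{n,h}$, the compactness of $V\hookrightarrow H$, and dominated convergence. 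All the individual estimates check out (the H\"older bound on $a_{n,h}$ in $V$, the weak convergence $a_{n,h}(t)\rightharpoonup 0$ in $V$ for fixed $h$, and the passage bounded-plus-weakly-null in $V$ $\Rightarrow$ strongly null in $V^{*}$). One point worth flagging explicitly: as stated in the paper the theorem omits the hypothesis that $V\hookrightarrow H$ is \emph{compact}, without which the conclusion is false; you correctly identify this as an implicit assumption of the Gelfand-triple setting (it holds in the paper's application with $V=H^{1}(\Omega)$, $H=L^{2}(\Omega)$, $\Omega$ bounded), and your proof uses it exactly where it must. So the only ``difference'' from the paper is that you prove the lemma rather than cite it; what your version buys is transparency about where each hypothesis ($p>1$, compactness of the embedding) enters, at the cost of a page of standard material that the authors chose to outsource to \cite{Ro}.
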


\begin{proof}
This Theorem is proved in \cite{Ro}, Theorem 8.1.
\end{proof}

\begin{theorem}
\label{teo poinc}Let $\Omega\subset%
\mathbb{R}
^{d}$, $d\in%
\mathbb{N}
$, be a bounded domain with boundary of class $C^{1,1}$. Let $z\in
H^{1}(\Omega)$ and $\Omega_{1}\subseteq\Omega$ such that $|\Omega_{1}|>0$.
Then there exists $C\geq0$ depending on $\Omega$ and $\Omega_{1}$ such that
\begin{equation}
\left\Vert z-\frac{1}{\left\vert \Omega_{1}\right\vert }\int_{\Omega_{1}%
}z\right\Vert _{L^{2}(\Omega)}\leq C\frac{1}{\left\vert \Omega_{1}\right\vert
}\left\Vert \nabla z\right\Vert _{L^{2}(\Omega)\text{.}} \label{dis:Poincare}%
\end{equation}

\end{theorem}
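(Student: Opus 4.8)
The plan is to reduce Theorem~\ref{teo poinc} to the classical Poincar\'e--Wirtinger inequality on $\Omega$. Since $\Omega$ is a bounded domain with $C^{1,1}$ boundary, it is connected and satisfies the hypotheses under which Poincar\'e--Wirtinger holds: there is a constant $C_{\Omega}>0$, depending only on $\Omega$, such that $\left\Vert z-\bar{z}\right\Vert _{L^{2}(\Omega)}\leq C_{\Omega}\left\Vert \nabla z\right\Vert _{L^{2}(\Omega)}$ for every $z\in H^{1}(\Omega)$, where $\bar{z}=\frac{1}{\left\vert \Omega\right\vert }\int_{\Omega}z$. The only delicate point in the whole argument is precisely the justification of this inequality, which is where the regularity of $\partial\Omega$ and the connectedness of $\Omega$ enter; everything else is elementary.

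First I would use the fact that $\bar{z}$ is the $L^{2}(\Omega)$-orthogonal projection of $z$ onto the constants, so that $\left\Vert z-\bar{z}\right\Vert _{L^{2}(\Omega)}\leq\left\Vert z-a\right\Vert _{L^{2}(\Omega)}$ for every $a\in\mathbb{R}$. Writing $z_{\Omega_{1}}=\frac{1}{\left\vert \Omega_{1}\right\vert }\int_{\Omega_{1}}z$, the triangle inequality then gives $\left\Vert z-z_{\Omega_{1}}\right\Vert _{L^{2}(\Omega)}\leq\left\Vert z-\bar{z}\right\Vert _{L^{2}(\Omega)}+\left\vert \Omega\right\vert ^{1/2}\left\vert \bar{z}-z_{\Omega_{1}}\right\vert $. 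The second step is to control the scalar quantity $\left\vert \bar{z}-z_{\Omega_{1}}\right\vert $: since $\bar{z}-z_{\Omega_{1}}=\frac{1}{\left\vert \Omega_{1}\right\vert }\int_{\Omega_{1}}(\bar{z}-z)\,dx$, the Cauchy--Schwarz inequality on $\Omega_{1}$ yields $\left\vert \bar{z}-z_{\Omega_{1}}\right\vert \leq\left\vert \Omega_{1}\right\vert ^{-1/2}\left\Vert z-\bar{z}\right\Vert _{L^{2}(\Omega_{1})}\leq\left\vert \Omega_{1}\right\vert ^{-1/2}\left\Vert z-\bar{z}\right\Vert _{L^{2}(\Omega)}$.

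Combining these two bounds gives $\left\Vert z-z_{\Omega_{1}}\right\Vert _{L^{2}(\Omega)}\leq\bigl(1+\left\vert \Omega\right\vert ^{1/2}\left\vert \Omega_{1}\right\vert ^{-1/2}\bigr)\left\Vert z-\bar{z}\right\Vert _{L^{2}(\Omega)}$, and then the Poincar\'e--Wirtinger inequality bounds the right-hand side by $\bigl(1+\left\vert \Omega\right\vert ^{1/2}\left\vert \Omega_{1}\right\vert ^{-1/2}\bigr)C_{\Omega}\left\Vert \nabla z\right\Vert _{L^{2}(\Omega)}$. Finally, since $\left\vert \Omega_{1}\right\vert \leq\left\vert \Omega\right\vert $, one has $1\leq\left\vert \Omega\right\vert \left\vert \Omega_{1}\right\vert ^{-1}$ and $\left\vert \Omega\right\vert ^{1/2}\left\vert \Omega_{1}\right\vert ^{-1/2}\leq\left\vert \Omega\right\vert \left\vert \Omega_{1}\right\vert ^{-1}$, whence $1+\left\vert \Omega\right\vert ^{1/2}\left\vert \Omega_{1}\right\vert ^{-1/2}\leq 2\left\vert \Omega\right\vert \left\vert \Omega_{1}\right\vert ^{-1}$. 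This produces (\ref{dis:Poincare}) with $C=2\left\vert \Omega\right\vert C_{\Omega}$, which in fact depends only on $\Omega$. I would emphasize that keeping the factor $\left\vert \Omega_{1}\right\vert ^{-1}$ explicit is exactly what makes the constant uniform over all admissible subsets $\Omega_{1}$ of positive measure, a feature that is used when applying the inequality to the varying level sets $\Omega_{1}^{t}$ in (\ref{sepp4}).
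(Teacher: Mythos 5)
Your proof is correct, but it takes a genuinely different route from the paper: the paper disposes of Theorem~\ref{teo poinc} with a one-line citation to Ziemer (Lemma~4.3.1 of \cite{Zi}), whereas you give a self-contained reduction to the classical Poincar\'e--Wirtinger inequality on $\Omega$. Your chain of estimates is sound: the triangle inequality splits $\left\Vert z-z_{\Omega_{1}}\right\Vert_{L^{2}(\Omega)}$ into $\left\Vert z-\bar{z}\right\Vert_{L^{2}(\Omega)}$ plus $\left\vert\Omega\right\vert^{1/2}\left\vert\bar{z}-z_{\Omega_{1}}\right\vert$, Cauchy--Schwarz on $\Omega_{1}$ controls the mean discrepancy by $\left\vert\Omega_{1}\right\vert^{-1/2}\left\Vert z-\bar{z}\right\Vert_{L^{2}(\Omega)}$, and the crude bound $1+\left\vert\Omega\right\vert^{1/2}\left\vert\Omega_{1}\right\vert^{-1/2}\leq 2\left\vert\Omega\right\vert\left\vert\Omega_{1}\right\vert^{-1}$ (valid since $\left\vert\Omega_{1}\right\vert\leq\left\vert\Omega\right\vert$) packages everything into the stated form with the factor $\left\vert\Omega_{1}\right\vert^{-1}$. (The remark that $\bar{z}$ is the $L^{2}$-orthogonal projection onto constants is harmless but not actually used; the triangle inequality alone suffices.) What your argument buys is transparency about the constant: you show $C$ can be taken to depend only on $\Omega$, uniformly over all measurable $\Omega_{1}\subseteq\Omega$ with $\left\vert\Omega_{1}\right\vert>0$, which is exactly the uniformity needed when the inequality is applied to the time-dependent level sets $\Omega_{1}^{t}$ in (\ref{sepp4}); the paper's citation leaves this point implicit in the reference. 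The one hypothesis you rightly flag is connectedness of $\Omega$ (built into the word ``domain''), which is where the Poincar\'e--Wirtinger inequality genuinely needs the geometry; everything else is elementary.
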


\begin{proof}
This inequality follows from \cite{Zi}, Lemma 4.3.1.
\end{proof}

\begin{theorem}
Let $\Omega\subset%
\mathbb{R}
^{d}$, $d\in%
\mathbb{N}
$, be a bounded domain with boundary of class $C^{1,1}$. Let $z\in
H^{1}(\Omega)$. Then there exists $C\geq0$ depending on $\Omega$ such that
\begin{equation}
\left\Vert z\right\Vert _{L^{2}(\Omega)}^{2}-C\delta^{-d/2}\left\Vert
z\right\Vert _{L^{1}(\Omega)}^{2}\leq\delta\left\Vert \nabla z\right\Vert
_{L^{2}(\Omega)}^{2}\text{ }\forall\delta\in\left(  0,1/2\right)  .
\label{dis:gN}%
\end{equation}

\end{theorem}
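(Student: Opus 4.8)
The plan is to obtain (\ref{dis:gN}) by interpolation, combining the classical Gagliardo--Nirenberg inequality, the Poincar\'e--Wirtinger inequality (which is the case $\Omega_1=\Omega$ of Theorem~\ref{teo poinc}), and Young's inequality with the appropriate conjugate exponents. The inequality is simply a quantitative, $\delta$-dependent form of the statement that $\|z\|_{L^2}$ is controlled by an interpolation between $\|\nabla z\|_{L^2}$ and $\|z\|_{L^1}$, so the whole computation is elementary; the only point requiring care is the bookkeeping of exponents so that the lower-order term comes out with the exact weight $\delta^{-d/2}$.

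First I would recall that, since $\partial\Omega\in C^{1,1}$, $\Omega$ is a Sobolev extension domain, so the Gagliardo--Nirenberg interpolation inequality holds on $\Omega$: there exist $C>0$ and $\theta=\frac{d}{d+2}\in(0,1)$ (determined by the scaling identity $\frac12=\theta(\frac12-\frac1d)+(1-\theta)$) with
\[
\|z\|_{L^2(\Omega)}\le C\,\|z\|_{W^{1,2}(\Omega)}^{\theta}\,\|z\|_{L^1(\Omega)}^{1-\theta}\qquad\forall\,z\in H^1(\Omega).
\]
Then I would reduce the $W^{1,2}$-norm to $\|\nabla z\|_{L^2(\Omega)}$ plus a lower-order term: writing $\bar z=\frac1{|\Omega|}\int_\Omega z$, Theorem~\ref{teo poinc} with $\Omega_1=\Omega$ gives $\|z-\bar z\|_{L^2(\Omega)}\le C\|\nabla z\|_{L^2(\Omega)}$, while $\|\bar z\|_{L^2(\Omega)}=|\bar z|\,|\Omega|^{1/2}\le|\Omega|^{-1/2}\|z\|_{L^1(\Omega)}$, whence $\|z\|_{W^{1,2}(\Omega)}\le C\big(\|\nabla z\|_{L^2(\Omega)}+\|z\|_{L^1(\Omega)}\big)$. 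Substituting into the previous display and squaring yields
\[
\|z\|_{L^2(\Omega)}^2\le C\big(\|\nabla z\|_{L^2(\Omega)}+\|z\|_{L^1(\Omega)}\big)^{2\theta}\,\|z\|_{L^1(\Omega)}^{2(1-\theta)}.
\]

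Finally I would apply the weighted Young inequality $a^{\theta}b^{1-\theta}\le\theta a+(1-\theta)b$ to $a=t\big(\|\nabla z\|_{L^2(\Omega)}+\|z\|_{L^1(\Omega)}\big)^2$ and $b=t^{-d/2}\|z\|_{L^1(\Omega)}^2$, with $t>0$ a free parameter; since $\theta/(1-\theta)=d/2$, the powers of $t$ inserted in the two factors cancel in the product, and one is left with
\[
\|z\|_{L^2(\Omega)}^2\le C\,t\,\|\nabla z\|_{L^2(\Omega)}^2+C\big(t+t^{-d/2}\big)\|z\|_{L^1(\Omega)}^2.
\]
Choosing $t$ proportional to $\delta$ makes the first coefficient equal to $\delta$, and for $\delta\in(0,1/2)$ one has $t\le1\le t^{-d/2}$, so the second coefficient is bounded by $C\delta^{-d/2}$; rearranging gives exactly (\ref{dis:gN}). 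I expect the only mildly delicate step to be this last one: the Gagliardo--Nirenberg exponent must be taken to be precisely $\theta=d/(d+2)$ so that the Young conjugates are $\frac{d+2}{d}$ and $\frac{d+2}{2}$ and the negative power of the truncation parameter is exactly $-d/2$, matching the factor $\delta^{-d/2}$ in the statement. (If one prefers, one may first bound $(a+b)^{2\theta}\le 2^{2\theta}(a^{2\theta}+b^{2\theta})$ — legitimate since $2\theta=\frac{2d}{d+2}<2$ — and apply Young to the two resulting products separately; the outcome is the same.)
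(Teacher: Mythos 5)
Your proof is correct and follows exactly the route the paper indicates: it derives \eqref{dis:gN} from the Gagliardo--Nirenberg interpolation inequality (the paper's own ``proof'' is just the citation to \cite{Ni}, Lecture II, with no details). Your exponent bookkeeping checks out --- $\theta=d/(d+2)$ gives $\theta/(1-\theta)=d/2$, so the powers of the free parameter $t$ cancel in the Young step --- and the reduction of $\left\Vert z\right\Vert _{W^{1,2}(\Omega)}$ to $\left\Vert \nabla z\right\Vert _{L^{2}(\Omega)}+\left\Vert z\right\Vert _{L^{1}(\Omega)}$ via Poincar\'e--Wirtinger correctly handles the fact that on a bounded domain the interpolation inequality involves the full Sobolev norm rather than the gradient seminorm alone.
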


\begin{proof}
This inequality is consequence of Gagliardo-Nierenberg interpolation
inequality. A proof can be found in \cite{Ni}, lecture II$.$
\end{proof}

\begin{theorem}
\label{elliptic rego}Let $\Omega\subset%
\mathbb{R}
^{d}$, $d\in%
\mathbb{N}
$, be a bounded domain with boundary of class $C^{1,1}$. Denote with $n$ the
outer unit normal on $\partial\Omega$. Let $\xi\in L^{2}(\Omega)$ and $\eta\in
H^{1/2}(\partial\Omega)$. If $z\in H^{1}(\Omega)$ is weak solution to
\[
\left\{
\begin{array}
[c]{ll}%
\Delta z=\xi & \text{in }\Omega\\
\frac{\partial\Omega}{\partial n}=\eta & \text{on }\partial\Omega
\end{array}
\right.  \text{.}%
\]
Then $z\in H^{2}(\Omega)$. Furthermore there exists $C>0$ not depending on
$\eta$ and $\xi$ such that
\[
\left\Vert z\right\Vert _{H^{2}(\Omega)}\leq C\left(  \left\Vert z\right\Vert
_{L^{2}(\Omega)}+\left\Vert \xi\right\Vert _{L^{2}(\Omega)}+\left\Vert
\eta\right\Vert _{H^{1/2}(\partial\Omega)}\right)  \text{.}%
\]

\end{theorem}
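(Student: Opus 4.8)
The plan is to reduce to the homogeneous Neumann problem and then invoke the classical $L^{2}$ regularity theory for second order elliptic equations in divergence form with Lipschitz coefficients. Since $\partial\Omega$ is of class $C^{1,1}$, the normal trace map $v\mapsto\partial v/\partial n$ from $H^{2}(\Omega)$ onto $H^{1/2}(\partial\Omega)$ admits a bounded linear right inverse, so there exists $\Phi\in H^{2}(\Omega)$ with $\partial\Phi/\partial n=\eta$ on $\partial\Omega$ and $\|\Phi\|_{H^{2}(\Omega)}\le C\|\eta\|_{H^{1/2}(\partial\Omega)}$. Then $\tilde z:=z-\Phi\in H^{1}(\Omega)$ is a weak solution of $\Delta\tilde z=\tilde\xi:=\xi-\Delta\Phi$ in $\Omega$ with homogeneous Neumann datum, where $\tilde\xi\in L^{2}(\Omega)$ and $\|\tilde\xi\|_{L^{2}(\Omega)}\le\|\xi\|_{L^{2}(\Omega)}+C\|\eta\|_{H^{1/2}(\partial\Omega)}$. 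Hence it suffices to prove $\tilde z\in H^{2}(\Omega)$ together with $\|\tilde z\|_{H^{2}(\Omega)}\le C\bigl(\|\tilde z\|_{L^{2}(\Omega)}+\|\tilde\xi\|_{L^{2}(\Omega)}\bigr)$, and then to undo the substitution.

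For $\tilde z$ I would run the standard localization argument: cover $\overline\Omega$ by finitely many balls, some contained in $\Omega$, the others centered at boundary points in which, after a rigid motion, $\partial\Omega$ is the graph of a $C^{1,1}$ function, and take a subordinate partition of unity. On an interior ball the Nirenberg difference-quotient method applied to the weak formulation $\int_{\Omega}\nabla\tilde z\cdot\nabla\varphi=-\int_{\Omega}\tilde\xi\,\varphi$, with $\varphi$ a cutoff times a second difference quotient of $\tilde z$, gives local $H^{2}$ control with right-hand side $C\bigl(\|\tilde\xi\|_{L^{2}(\Omega)}+\|\tilde z\|_{H^{1}(\Omega)}\bigr)$. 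On a boundary ball I would straighten the boundary by a $C^{1,1}$ diffeomorphism; in the new coordinates $\tilde z$ solves a uniformly elliptic equation $-\partial_{i}(a^{ij}\partial_{j}\tilde z)=\tilde\xi$ on a half-ball with coefficients $a^{ij}\in W^{1,\infty}$ (only Lipschitz, since $\partial\Omega$ is merely $C^{1,1}$), and the homogeneous Neumann condition becomes the homogeneous conormal condition on the flat part of the boundary. Difference quotients in the $d-1$ tangential directions — admissible because the conormal condition is preserved by tangential translations — bound all second derivatives $\partial_{\alpha}\partial_{\beta}\tilde z$ with $\alpha\le d-1$ or $\beta\le d-1$ in $L^{2}$; then, writing the equation as $a^{dd}\partial_{d}^{2}\tilde z=-\tilde\xi-\sum_{(i,j)\neq(d,d)}a^{ij}\partial_{i}\partial_{j}\tilde z-(\partial_{i}a^{ij})\partial_{j}\tilde z$ and using $a^{dd}\ge\lambda>0$ together with the terms already estimated (here the Lipschitz bounds on the $a^{ij}$ are used), one recovers $\partial_{d}^{2}\tilde z\in L^{2}$. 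Transforming back, multiplying by the partition of unity and summing yields $\tilde z\in H^{2}(\Omega)$ with the asserted bound.

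The main technical point is precisely the boundary estimate under only $C^{1,1}$ regularity of $\partial\Omega$: the equation cannot be differentiated, so the difference-quotient scheme is unavoidable, and one must check carefully that Lipschitz coefficients suffice and that the conormal condition is compatible with tangential finite differences. This is, however, entirely classical; a complete proof for the Neumann (conormal) problem on $C^{1,1}$ domains can be found, e.g., in Grisvard, \emph{Elliptic Problems in Nonsmooth Domains}, Theorem~2.5.1.1, or in Ne\v{c}as, \emph{Les m\'ethodes directes en th\'eorie des \'equations elliptiques}; see also \cite[Theorem~9.9]{GT} for the related interior and Dirichlet estimates. Finally, collecting the bounds for $\tilde z$ and recalling $z=\tilde z+\Phi$, together with $\|\tilde z\|_{L^{2}(\Omega)}\le\|z\|_{L^{2}(\Omega)}+C\|\eta\|_{H^{1/2}(\partial\Omega)}$, $\|\tilde\xi\|_{L^{2}(\Omega)}\le\|\xi\|_{L^{2}(\Omega)}+C\|\eta\|_{H^{1/2}(\partial\Omega)}$ and $\|\Phi\|_{H^{2}(\Omega)}\le C\|\eta\|_{H^{1/2}(\partial\Omega)}$, one obtains $\|z\|_{H^{2}(\Omega)}\le C\bigl(\|z\|_{L^{2}(\Omega)}+\|\xi\|_{L^{2}(\Omega)}+\|\eta\|_{H^{1/2}(\partial\Omega)}\bigr)$ with $C$ independent of $\xi$ and $\eta$.
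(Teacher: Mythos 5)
Your argument is correct, but it is worth noting that the paper does not actually prove this statement at all: its entire ``proof'' is the citation ``This theorem follows from \cite{BG}, Theorem 3.1.5,'' so the result is treated as a black-box auxiliary fact in the appendix. What you have written is essentially the standard proof that such a reference would contain: lift the Neumann datum $\eta$ to some $\Phi\in H^{2}(\Omega)$ using the surjectivity of the normal-trace map (valid on $C^{1,1}$ domains), reduce to the homogeneous conormal problem, and then run the Nirenberg difference-quotient scheme --- interior balls, boundary flattening by a $C^{1,1}$ chart producing Lipschitz coefficients $a^{ij}$, tangential difference quotients compatible with the conormal condition, and recovery of $\partial_{d}^{2}\tilde z$ algebraically from the equation using uniform ellipticity. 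All the delicate points you flag (that the equation cannot be differentiated classically under $C^{1,1}$ regularity, that $W^{1,\infty}$ coefficients suffice, that tangential translations preserve the homogeneous conormal condition on the flattened boundary) are exactly the right ones, and your bookkeeping of the constants shows correctly that $C$ depends only on $\Omega$. The trade-off is the usual one: the paper's citation is economical and appropriate for an appendix lemma, while your version is self-contained modulo entirely classical facts and makes visible where the $C^{1,1}$ hypothesis is actually used. The only cosmetic remark is that the statement in the paper contains a typo ($\frac{\partial\Omega}{\partial n}=\eta$ should read $\frac{\partial z}{\partial n}=\eta$), which you have implicitly and correctly repaired.
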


\begin{proof}
This theorem follows from \cite{BG}, Theorem 3.1.5.
\end{proof}

\begin{lemma}
\label{final lemma}Let $V,B,Y$ be Banach spaces such that $V$ is compact
embedded in $B$ and $B$ continuous embedded in $Y$. Let $\phi\in L^{\infty
}(0,T,V)\cap C([0,T],Y)$. Then
\begin{equation}
\phi\in C([0,T],B)\text{.} \label{tyu}%
\end{equation}

\end{lemma}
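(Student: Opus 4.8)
The statement is the classical Lions–Aubin–style continuity lemma: if $\phi$ is bounded into a space $V$ that embeds compactly in $B$ and is already continuous into a weaker space $Y$, then $\phi$ is continuous into $B$. I would argue by contradiction. Suppose $\phi\notin C([0,T],B)$; then there exist $t_0\in[0,T]$, a number $\delta>0$, and a sequence $t_n\to t_0$ such that
\[
\|\phi(t_n)-\phi(t_0)\|_B\geq\delta\quad\text{for all }n.
\]
Since $\phi\in L^\infty(0,T,V)$, the sequence $\{\phi(t_n)\}$ is bounded in $V$; one has to be slightly careful because $L^\infty(0,T,V)$ only controls $\phi(t)$ for a.a. $t$, but the points $t_n$ can be perturbed within a null set (or one notes that $\phi(t_0)$ is a well-defined element of $B$ by the $C([0,T],Y)$ hypothesis and works with a.e. approximants), so I may assume $\sup_n\|\phi(t_n)\|_V<\infty$ and also $\|\phi(t_0)\|_V<\infty$.

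By the compact embedding $V\hookrightarrow\hookrightarrow B$, after passing to a subsequence (not relabeled) there is some $\eta\in B$ with $\phi(t_n)\to\eta$ strongly in $B$. Since $B\hookrightarrow Y$ continuously, also $\phi(t_n)\to\eta$ in $Y$. On the other hand, $\phi\in C([0,T],Y)$ and $t_n\to t_0$ force $\phi(t_n)\to\phi(t_0)$ in $Y$. By uniqueness of limits in $Y$ we get $\eta=\phi(t_0)$ as elements of $Y$, hence as elements of $B$ (both lie in $B$ and $Y$ separates points of $B$). Therefore $\phi(t_n)\to\phi(t_0)$ strongly in $B$, contradicting $\|\phi(t_n)-\phi(t_0)\|_B\geq\delta$. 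This proves $\phi\in C([0,T],B)$.

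The only genuinely delicate point is the first step: reconciling the pointwise-everywhere statement of the desired conclusion with the merely-a.e. information carried by $L^\infty(0,T,V)$. The clean way to handle this is to observe that $\phi$, viewed in $C([0,T],Y)$, has a canonical everywhere-defined representative, and that the $L^\infty(0,T,V)$ bound, combined with this continuity into $Y$ and the compactness, upgrades to $\phi(t)\in V$ with $\sup_{t\in[0,T]}\|\phi(t)\|_V<\infty$: indeed for any $t$ pick $t_k\to t$ with $\phi(t_k)\in V$ bounded, extract a $B$-convergent (hence $Y$-convergent) subsequence, identify the limit with $\phi(t)$ via continuity into $Y$, and use weak-$*$ lower semicontinuity of the $V$-norm along the weakly-$*$ convergent (in $V$) subsequence. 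Once this uniform $V$-bound on the continuous representative is in hand, the contradiction argument above goes through verbatim, and this is the step I expect to require the most care.
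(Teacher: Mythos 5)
Your proof is correct and follows essentially the same route as the paper's: take $t_n\to t_0$, use the $L^\infty(0,T,V)$ bound and the compact embedding $V\hookrightarrow B$ to extract a $B$-convergent subsequence, and identify the limit with $\phi(t_0)$ via the continuity into $Y$ and the injectivity of $B\hookrightarrow Y$. You are in fact more careful than the paper, which silently assumes $\{\phi(s_n)\}$ is bounded in $V$ despite $L^\infty(0,T,V)$ only giving an essential supremum; your repair of this point is sound (though the appeal to weak-$*$ lower semicontinuity of the $V$-norm tacitly uses reflexivity of $V$, which is not needed for the conclusion --- it suffices that $\phi(t)$ lies in the $B$-closure of a $V$-ball, which is compact in $B$).
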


\begin{proof}
Let $\{s_{n}\}_{n\in%
\mathbb{N}
}\subset\lbrack0,T]$ such that $s_{n}\rightarrow s_{\infty}$ for
$n\rightarrow\infty$. Then $\phi(s_{n})\rightarrow\phi(s_{\infty})$ in $Y$ and
$\{\phi(s_{n})\}$ is bounded in $V$. Thus there exists a subsequence
$s_{n_{k}}$ such that $\phi(s_{n_{k}})$ is convergent in $B$ and thus in $Y.$
Thanks to the uniqueness of the limit, we have $\phi(s_{n_{k}})\rightarrow
\phi(s_{\infty})$ in $B$. Thanks to the arbitrariness of $\{s_{n},s_{\infty
}\}_{n\in%
\mathbb{N}
}$ we have (\ref{tyu}).
\end{proof}

\end{document}